\pgfplotsset{compat=1.18}
\DeclareMathAlphabet{\mathpzc}{OT1}{pzc}{m}{it}
\newcommand{\bs}[1]{\ensuremath{\boldsymbol{#1}}}
\newcommand{\bvecS}[1]{\ensuremath{\boldsymbol{#1}}}
\newcommand{\M}[1]{\mathbf{#1}}
\newcommand{\norm}[2][]{\left\Vert #2\right\Vert_{#1}} 
\newcommand{\range}{\text{range}}
\newcommand{\eg}{e.g.\xspace}
\newcommand{\ie}{i.e.\xspace}
\newcommand{\both}{v} 
\newcommand{\bothdisc}{\textbf{\both}} 
\newcommand{\m}{m} 
\newcommand{\mdisc}{\textbf{m}} 
\newcommand{\msub}{m} 
\newcommand{\aux}{\xi} 
\newcommand{\auxdisc}{\boldsymbol{\aux}} 
\newcommand{\noise}{\M{e}} 
\newcommand{\noisesub}{e}
\newcommand{\data}{\M{d}} 
\newcommand{\datasub}{d}
\newcommand{\afwd}{\mathcal{G}} 
\newcommand{\afwddisc}{\M{G}} 
\newcommand{\sfwd}{\mathcal{F}} 
\newcommand{\sfwddisc}{\M{F}} 
\newcommand{\zero}{\mathsf{O}} 
\newcommand{\sjact}{\tilde{\sfwd}} 
\newcommand{\eps}{\bvecS{\varepsilon}} 
\newcommand{\epssub}{\varepsilon}
\newcommand{\tot}{\bvecS{\eta}} 
\newcommand{\totsub}{\eta} 
\newcommand{\obsOp}{\mathcal{V}} 
\newcommand{\sens}{\M{s}} 
\newcommand{\weight}{\M{w}} 
\newcommand{\Weight}{\M{W}} 
\newcommand{\nSens}{s} 
\newcommand{\nTimes}{\tau} 
\newcommand{\nData}{d} 
\newcommand{\nChosen}{k} 
\newcommand{\nChosenObs}{r} 
\newcommand{\Npatches}{N}
\newcommand{\nboth}{n} 
\newcommand{\nm}{n_m} 
\newcommand{\naux}{n_{\aux}} 
\newcommand{\bothSpace}{\mathscr{H}} 
\newcommand{\mSpace}{\mathscr{M}} 
\newcommand{\measSpace}{\mathscr{D}} 
\newcommand{\auxSpace}{\mathscr{X}} 
\newcommand{\mupr}{\mu_{\both}} 
\newcommand{\likelihood}[1]{\pi^{#1}_{\rm{like}}(\data\vert \both)} 
\newcommand{\mupostmW}[1]{\mu^{#1}_{\m\vert\datasub,w}} 
\newcommand{\mupostboth}[1]{\mu^{#1}_{\both\vert\datasub}} 
\newcommand{\mupostbothW}[1]{\mu^{#1}_{\both\vert\datasub,w}} 
\newcommand{\mprdisc}{\mdisc_0} 
\newcommand{\Gprdisc}{\boldsymbol{\Gamma}_{\mdisc}}
\newcommand{\bothmeandisc}{\bar{\bothdisc}} 
\newcommand{\meanm}{\m_0} 
\newcommand{\meanboth}{\both_0} 
\newcommand{\meaneps}{\eps_0} 
\newcommand{\meannoise}{\noise_0} 
\newcommand{\meantot}{\tot_{0|\both}} 
\newcommand{\Cbothboth}{\mathcal{C}_{\both\both}} 
\newcommand{\Cbotheps}{\mathcal{C}_{\both\epssub}} 
\newcommand{\Cepsboth}{\mathcal{C}_{\epssub\both}} 
\newcommand{\Cepseps}{\M{\Gamma}_{\epssub\epssub}} 
\newcommand{\Cnoise}{\M{\Gamma_{\noisesub}}} 
\newcommand{\Cnoiseinv}{\Cnoise^{\!\!-1}}
\newcommand{\Cepsgboth}{\M{\Gamma}_{\epssub\vert\both}}
\newcommand{\Ctotgboth}{\M{\Gamma}_{\totsub\vert\both}} 
\newcommand{\meantotgboth}{\tot_{0\vert\both}} %
\newcommand{\meanpostboth}[1]{\both^{#1}_{0\vert\datasub}} 
\newcommand{\meanpostbothW}[1]{\both^{#1}_{0\vert\datasub,w}} 
\newcommand{\Cpostboth}[1]{\mathcal{C}^{#1}_{\both\vert\datasub}} 
\newcommand{\CpostbothW}[1]{\mathcal{C}^{#1}_{\both\vert w}} 
\newcommand{\CpostbothWdisc}[1]{\M{C}^{#1}_{\both \vert \datasub,w}} 
\newcommand{\CpostmW}[1]{\mathcal{C}^{#1}_{\m\vert\datasub,w}} 
\DeclareMathOperator*{\argmin}{arg\,min}
\DeclareMathOperator*{\argmax}{arg\,max}
\DeclareMathOperator*{\trace}{trace}
\newtheorem{thm}{Theorem}
\newtheorem{rmk}{Remark}
\newtheorem{cor}{Corollary}
\begin{document}
\setlength{\belowcaptionskip}{-11pt}

\setlength{\abovedisplayskip}{4pt}
\setlength{\belowdisplayskip}{4pt}

\title{Non-intrusive optimal experimental design for large-scale nonlinear
Bayesian inverse problems using a Bayesian approximation error
approach}

\def\addressC{Interdisciplinary Center for Scientific Computing (IWR), Heidelberg University, Heidelberg, Germany}
\def\addressD{Courant Institute of Mathematical Sciences, New York University, New York, NY, USA.}
\def\addressO{Department of Engineering Science, University of Auckland, Auckland, New Zealand}

\author{Karina Koval\footnotemark[1]
    \and Ruanui Nicholson\footnotemark[3]}
\renewcommand{\thefootnote}{\fnsymbol{footnote}}

\footnotetext[1]{\addressC\ (\email{karina.koval@iwr.uni-heidelberg.de}).}
\footnotetext[3]{\addressO\ (\email{ruanui.nicholson@auckland.ac.nz}).}

\maketitle

\begin{abstract}
We consider optimal experimental design (OED) for nonlinear inverse problems within the Bayesian framework. Optimizing the data acquisition process for large-scale \emph{nonlinear} Bayesian inverse problems is a computationally challenging task since the posterior is typically intractable and commonly-encountered optimality criteria depend on the observed data. Since these challenges are not present in OED for \emph{linear} Bayesian inverse problems, we propose an approach based on first linearizing the associated forward problem and then optimizing the experimental design. Replacing an accurate but costly model with some linear surrogate, while justified for certain problems, can lead to incorrect posteriors and sub-optimal designs if model discrepancy is ignored. To avoid this, we use the Bayesian approximation error (BAE) approach to formulate an A-optimal design objective for sensor selection that is aware of the model error. In line with recent developments, we prove that this uncertainty-aware objective is  independent of the exact choice of linearization. This key observation facilitates the formulation of an uncertainty-aware OED objective function using a completely trivial linear map, the zero map, as a surrogate to the forward dynamics. The base methodology is also extended to marginalized OED problems, accommodating uncertainties arising from both linear approximations and unknown auxiliary parameters. Our approach only requires parameter and data sample pairs, hence it is particularly well-suited for black box forward models. We demonstrate the effectiveness of our method for finding optimal designs in an idealized subsurface flow inverse problem and for tsunami detection. 
\end{abstract} 

\begin{keywords}
  optimal experimental design, Bayesian inverse problems, Bayesian approximation error, linearization
\end{keywords}

\begin{AMS}
  62F15, 65K05, 62-08, 62E17
\end{AMS}

\section{Introduction}\label{sec1}
Many problems of interest in engineering and the natural sciences can be described by mathematical models involving partial differential equations (PDEs) or systems of ordinary differential equations. In various settings, however, it is parameters of the models, such as coefficients or boundary/initial conditions, which are of importance. These parameters often cannot be observed directly, and are instead estimated using observations of a related quantity as well as the governing mathematical equations. In typical applications, the observations are noisy and informative about a small subset of the unknown parameters, making parameter estimation an ill-posed problem. The Bayesian approach to parameter estimation (or \emph{inverse problems}) is commonly used to deal with the ill-posedness of the problem. In the Bayesian approach, the solution to the inverse problem is a conditional distribution that enables quantifying the probability that the observed data originated from any candidate parameter choice. 

The quality and quantity of the measurement data has a significant effect on the quality of the solution to the Bayesian inverse problem. Thus, it is crucial to guide data acquisition and choose experimental conditions that lead to informative observations. This requires solving an optimal experimental design (OED) problem~\cite{alexanderian:oedreview,chaloner:oed}. While the OED framework encompasses various different problem-specific design types, the subclass we focus on is that of sensor placement. Specifically, assuming measurements can be acquired at some set of sensors, the OED problem consists of choosing the most informative sensor subset from some candidate set.  

Finding the optimal sensor placement is a particularly challenging problem if the parameter-to-observable (PTO) map is nonlinear. For nonlinear Bayesian inverse problems, the most informative design can be formulated as the optimizer of an expected utility function that evaluates the effectiveness of solving the inverse problem with data collected at any design choice. However, finding the analytic optimizer is generally impossible, and numerical approximation presents further difficulties. These stem from various factors --- many utility functions (including the classical A- and D-optimality) are functions of the intractable posterior distribution, the expectation is taken with respect to the (typically unknown) conditional density for the data given the design (\ie, the \emph{evidence}), and even approximate evaluation of the utility function requires a large number of costly forward (and potentially adjoint) PDE solves.

A commonly employed approximation to the expected utility function is obtained via linearization of the PTO map or through a Gaussian approximation to the posterior~\cite{alexanderian:nonlinoed,alexanderian:baeoed,wu:fast}. This is also the approach we take in this present work, however, we employ a \emph{global} linearization (as opposed to \emph{local} linearizations as in the aforementioned works) and incorporate the resulting model error into the Bayesian inverse problem using the Bayesian approximation error (BAE) approach~\cite{kaipio2013approximate,kaipio:invCrimes}. Focusing on the A-optimal criterion, we expand on the work of~\cite{nicholson:linearBAE} and show that the optimal designs obtained using the global linear map are independent of the choice of linearization when model error is incorporated with the BAE approach. As a consequence, we present a scalable, derivative-free linearization approach for finding A-optimal sensor placements for nonlinear Bayesian inverse problems. We also extend our approach to OED under uncertainty, \ie, finding designs that are optimal for estimating some primary parameter(s) of interest while incorporating uncertainty in auxiliary nuisance parameters. 

\paragraph{Related work}
There has been a recent surge of interest in optimal experimental design for parameter estimation problems governed by uncertain mathematical models. Many of these references assume exact knowledge of the governing dynamics and focus on uncertainty due to unknown nuisance/auxiliary inputs to the model~\cite{alexanderian:marginal,alexanderian:baeoed,feng:OEDUU,koval:oed}, or misspecification of the statistical model~\cite{attia:robustOED}. However, in certain situations, the accurate forward model is unknown or prohibitively expensive to use in an optimization procedure, hence an approximate or surrogate model is used instead. The importance of accounting for model error in the context of Bayesian inversion has been well-established in the literature (see, \eg,~\cite{kaipio:stats,kennedy:calibration}). This importance naturally extends to the OED problem, and in~\cite{cvetkovic:mitigating}, the authors explore OED under uncertainty from the perspective of designing observation operators that mitigate model error. In contrast, we focus on the classical OED problem of choosing experimental conditions that lead to optimal parameter inference and formulate an uncertainty-aware optimality criterion.   

Model error-informed approximations to posterior distributions can be obtained in various ways, \eg, by employing Gaussian processes~\cite{kennedy:calibration,higdon:calibration}, via error-corrected delayed acceptance Markov chain Monte Carlo (MCMC)~\cite{Cui:MCMC}, and through the Bayesian approximation error approach~\cite{kaipio:invCrimes}. Here, we follow the latter technique for incorporating model error into the OED problem. The BAE approach is also used for sensor placement design problems in~\cite{alexanderian:baeoed} to account for model error stemming from unknown nuisance parameters, however our formulation differs in the following key ways: (i) we use a global linear approximation to the parameter-to-observable map, rather than many local Gaussian approximations, (ii) our approach accommodates model error due to the use of a linear surrogate as well as auxiliary input parameters, and (iii) we employ the {\em full} BAE error approach, rather than the so-called \emph{enhanced error model}. The latter point facilitates a derivative-free approach to linearized OED. An alternative derivative-free approach to OED under model uncertainty is presented in~\cite{dunbar:ensemble}, where D-optimal designs are obtained using an ensemble-based approximation to the posterior covariance matrix. In their approach, which is based on a calibrate-emulate-sample algorithm, a cheap-to-evaluate emulator and the corresponding model error correction term are learned using Gaussian processes. 
\paragraph{Contributions}
The main contributes of this article are: 
(i) We propose a tractable, uncertainty-aware approach for optimal sensor placement in infinite-dimensional Bayesian inverse problems by substituting the costly accurate forward model with an inexpensive linear surrogate and incorporating the corresponding model error using the BAE approach.  
(ii) Expanding on the work of~\cite{nicholson:linearBAE}, we show that the resulting approximate uncertainty-aware OED objective is asymptotically independent of the specific choice of linear surrogate. This result is key in formulating a black-box OED algorithm that only requires sample parameter and data pairs. 
(iii) The base methodology and theorems are also extended to marginalized OED, where the unknowns include some primary parameters of interest as well as some unimportant (or uninteresting) auxiliary inputs.
(iv) With two model problems, we present comprehensive computational studies that illustrate the effectiveness of our approach for both standard and marginalized OED. In particular, we showcase the necessity of incorporating model error into the design problem, and the effectiveness of the optimal designs computed using our non-intrusive approach in solving the original high-fidelity inverse problem. 

\paragraph{Limitations} 
Of course, the approach presented also has some limitations, which include: 
	(i) Similar to any sort of linearization-based approximation procedure, our approach may not be well-suited for Bayesian inverse problems involving highly nonlinear PTO maps or non-Gaussian priors. While our approach does not place explicit restrictions on the prior, there is no guarantee that the resulting designs are effective in solving the original problem. However, if a the problem is not well-approximated by a Gaussian, the uncertainty-informed approximate posterior can still be used in more accurate (hence more costly) OED schemes as, \eg, a proposal density for MCMC-based methods, or as a reference density for transport-based approaches.
(ii) Independence of the approximate posterior to the choice of linear surrogate is only guaranteed asymptotically, thus sufficient approximation to the second-order statistics may require a potentially large number of expensive PDE solves, particularly if the linear surrogate is a poor approximation to the true dynamics (as is the case when using the zero operator). This challenge is not exclusive to our methodology, since many OED algorithms rely on expensive offline PDE solves. However, if the number of samples required is prohibitively expensive and the PTO map is differentiable, using the derivative as a control variate could speed up convergence to the true error statistics. A theoretical study of the surrogate-dependent statistical convergence could help develop a better understanding of the suitability and limitations of our approach.
(iii) In the current paper the methods used are based on explicit formulation of the associated covariance operators/matrices, and as such could be become infeasible for large problems. However, low-rank and other matrix representations, or potentially matrix-free variants,  could avoid this potential bottleneck.

\paragraph{Outline of the paper}
In~\Cref{sec2} we introduce notation used throughout the paper and outline relevant background material on infinite-dimensional Bayesian inverse problems, and the Bayesian approximation error approach. In~\Cref{sec3}, we outline our uncertainty-aware linearize-then-optimize approach to optimal experimental design and show that the resulting objective is independent of the choice of linear surrogate. As shown in~\Cref{sec3 3}, this independence also extends to marginalized OED under model error, \ie, in this section, we consider OED for Bayesian inverse problems where the model error stems from the use of a linear surrogate and the presence of auxiliary unknown inputs to the forward model. The effectiveness of our proposed method is illustrated in~\Cref{sec4} and~\Cref{sec5}, where OED is considered for an idealized subsurface flow geometric inverse problem and a tsunami source detection problem, respectively.

\section{Background}\label{sec2}

In this section, we motivate our approach for carrying out derivative-free linearized OED for large-scale PDE-constrained problems. We begin by introducing the  required notation and preliminaries in Section \ref{sec2 1}.  We
then briefly recall the Bayesian approach to inverse problems and the Bayesian  approximation error approach.

\subsection{Preliminaries}\label{sec2 1}
In this article we are interested in parameters which take values in possibly infinite-dimensional Hilbert spaces. For a Hilbert space $\bothSpace$, the corresponding inner product is denoted by $\langle \cdot,\cdot \rangle_{\bothSpace}$ and the associated norm by $\norm[\bothSpace]{\cdot}$. For $\bothSpace_1$ and  $\bothSpace_2$ Hilbert spaces, we let $\mathcal{L}(\bothSpace_1,\bothSpace_2)$ denote the space of bounded linear operators from $\bothSpace_1$ to $\bothSpace_2$. 

In this article we are particularly interested in Gaussian measures on Hilbert spaces. To this end, throughout the article we use $\mathcal{N}(\meanboth,\mathcal{C}_{\both\both})$ to denote a Gaussian measure with mean $\meanboth$ and covariance operator $\mathcal{C}_{\both\both}$ . In the infinite dimensional case, the covariance
operator is required to satisfy certain regularity assumptions to ensure the Bayesian inverse problem is
well-defined \cite{stuart:bayes}. As such, we assume $\mathcal{C}_{\both\both}$ is a strictly positive
self-adjoint trace-class operator. We recall the Gaussian measure $\mathcal{N}(\meanboth,\mathcal{C}_{\both\both})$ on $\bothSpace$ induces the \emph{Cameron-Martin} space  $\mathscr{E} \coloneqq \range(\Cbothboth^{1/2})$, which is endowed with the inner product $\langle u,w \rangle_{\mathscr{E}} \coloneqq \langle \Cbothboth^{-1/2}u, \Cbothboth^{-1/2}w\rangle_{\bothSpace}$ for all $u,w \in \mathscr{E}$.

For $\bothSpace_1$ and $\bothSpace_2$ real Hilbert spaces, and for a linear operator $\mathcal{A}\in\mathcal{L}(\bothSpace_1,\bothSpace_2)$, we denote by $\mathcal{A}^\ast\in\mathcal{L}(\bothSpace_2,\bothSpace_1)$ the adjoint. Moreover, for $u\in\bothSpace_1$ and $w\in\bothSpace_2$, we define the (outer product) operator $u\otimes w$ by $(u\otimes w)v=\langle w,v \rangle_{\mathcal{H}_2} u$, for any $v\in\bothSpace_2$. The (cross-)covariance operator (between $u$ and $w$) is then defined
\begin{equation}
\mathcal{C}_{uw}=\mathbb{E}[(u-u_0)\otimes(w-w_0)]=\mathcal{C}_{wu}^\ast.
\label{eq:crossCovs}
\end{equation}

\subsection{Bayesian inverse problems}\label{sec2 2}

The discussion presented here is carried out in the possibly infinite-dimensional Hilbert space setting~\cite{stuart:bayes}. We consider the problem of inferring an unknown parameter $\both \in \bothSpace$ given observations $\data \in \mathbb{R}^{\nData}$ that are related to $\both$ as 
\begin{equation}\label{eq: main1}
\data = \afwd(\both) + \noise, 
\end{equation}
where $\noise \in \mathbb{R}^{\nData}$ denotes measurement noise. 
In our target applications, the (potentially) nonlinear parameter-to-observable map $\afwd: \bothSpace \rightarrow \mathbb{R}^{\nData}$ can be decomposed into the composition of two operators, 
$\afwd(\cdot) \coloneqq (\obsOp \circ \mathcal{S})(\cdot)$, where the parameter-to-state map $\mathcal{S}: \bothSpace \rightarrow \mathcal{X}$ (for some suitably-chosen function space $\mathcal{X}$) requires solving a partial differential equation (PDE), and $\obsOp: \mathcal{X} \rightarrow \mathbb{R}^{\nData}$ denotes a linear spatio-temporal observation operator. 

In the Bayesian approach for parameter estimation, $\both$ is treated as a random variable and is endowed with a prior probability law ($\mupr$) that encompasses any knowledge we have about $\both$ prior to data collection. Here we use a Gaussian prior $\mathcal{N}(\meanboth,\Cbothboth)$ with prior mean $\meanboth \in \mathscr{E}$ and a trace-class covariance operator $\Cbothboth$.  In practice it is often assumed that the data is corrupted by Gaussian noise which is independent of the parameter, \ie, $\noise \sim \mathcal{N}(\meannoise, \Cnoise)$, where $\Cnoise \in \mathbb{R}^{\nData \times \nData}$ is a symmetric-positive-definite (SPD) covariance matrix and $\noise\perp \both$.  

Within the Bayesian framework, the solution to the inverse problem is the (data-informed) posterior law of $\both|\data$, which we denote by $\mupostboth{}$. The posterior law is established as a Radon-Nikodym derivative via Bayes' law 
and takes the following form under our additive Gaussian noise model, 
\begin{equation}
\frac{d\mupostboth{}}{d\mupr} \propto \likelihood{}, \quad \likelihood{} \propto \exp \left[- \frac{1}{2} \norm[\Cnoiseinv]{\data-\left(\afwd(\both)+\meannoise\right)}^2 \right].
\label{eq:RN_deriv}
\end{equation} 

Fully characterizing the posterior in the case of PDE-constrained inverse problems is generally infeasible. As a computationally practical alternative it is common to compute a Gaussian approximation to the posterior measure. A commonly utilized approximation is the so-called Laplace approximation~\cite{wong:asymptotic}, which approximates $\mu_{\both\vert\datasub}$ as a Gaussian, $\mathscr{L}_{\mupostboth{}} \coloneqq \mathcal{N}(\both_{\rm MAP},\mathcal{C}_{\rm post})$, centered around the maximum a posteriori (MAP) estimate~\cite{helin:map},
\begin{equation}
\both_{\rm MAP}\coloneqq\argmin_{\both\in\mathscr{E}}\mathcal{J}(\data,\both)\label{eq MAP},\quad \mathcal{J}(\M{d},\both) \coloneqq\frac{1}{2}\norm[\Cnoiseinv]{\data-\left(\afwd(\both)+\meannoise\right)}^2+\frac{1}{2}\norm[\Cbothboth^{-1}]{\both-\meanboth}^2.
\end{equation}
The approximate posterior covariance operator of the Laplace approximation is given by $\mathcal{C}_{\rm post}=\mathcal{H}^{-1}_{\both_{\rm MAP}}$, where $\mathcal{H}_{\both_{\rm MAP}}$ denotes the Hessian of $\mathcal{J}$ evaluated at the MAP estimate.

Construction of the Laplace approximation presents many challenges. While the MAP estimator is guaranteed to exist under relatively mild assumptions on the PTO map and data misfit, uniqueness of the estimator can not be guaranteed in general for nonlinear Bayesian inverse problems (see~\cite{dashti:map} for details). Additional theoretical difficulties arise if the map $\afwd$ is not differentiable since the Laplace approximation requires access to derivatives of the PTO map. Constructing the Laplace approximation is also computationally costly. Finding the MAP estimator via minimization of~\eqref{eq MAP} can involve many applications of the PTO map and its adjoint. Additionally, the inverse Hessian defining the posterior covariance operators typically not available in explicit form and needs to be estimated with additional forward and adjoint solves using, \eg, optimal low-rank approximations as described in~\cite{spantini:LRA}. This computational cost is exacerbated when one seeks optimal designs for nonlinear Bayesian inverse problems. As discussed in~\Cref{sec3}, OED in the nonlinear setting typically requires solving many Bayesian inverse problems. Thus, typical approaches (\eg,~\cite{alexanderian:nonlinoed}) involve building many Gaussian approximations.

An alternative approach to obtaining a Gaussian approximation to the posterior distribution that circumvents some of the aforementioned challenges is to employ a linear surrogate to the PTO map. We leave the precise form of the linear (affine) surrogate arbitrary for now, and denote the linear map with $\sfwd \in \mathcal{L}(\bothSpace,\mathbb{R}^{\nData})$. Naturally, replacing the nonlinear map with some linear approximation leads to errors that need to be accounted for to avoid erroneous overconfidence and bias in the resulting Gaussian posterior. These model approximation errors can be incorporated using the Bayesian approximation error (BAE) approach \cite{kaipio2013approximate,kaipio:invCrimes}. 

Before outlining the BAE approach to account for the resulting approximation errors, we  recall the explicit results of the linear Gaussian case.
\paragraph{The linear Gaussian case} 
In the case of a linear PTO map, \ie,
\begin{align}
\data=\sfwd\both+\noise\nonumber
\end{align}
for $\sfwd\in\mathcal{L}(\bothSpace,\mathbb{R}^{\nData})$, under Gaussian prior and additive Gaussian noise, the posterior measure is Gaussian, with conditional mean and conditional covariance given, respectively, by (see \cite[Example 6.23]{stuart:bayes})
\begin{align}\label{eq linGaus}
\meanpostboth{} &= \Cpostboth{} \left(\sfwd^\ast\Cnoiseinv(\data-\meannoise)+\Cbothboth^{-1}\meanboth\right),\quad
\Cpostboth{} =(\Cbothboth^{-1}+\sfwd^\ast\Cnoiseinv\sfwd)^{-1}.
\end{align}

\subsection{The Bayesian approximation error approach}\label{sec2 3}

Neglecting the model errors and uncertainties induced by the use of a surrogate to the PTO map in general leads to overconfidence in biased estimates \cite{kaipio:invCrimes,kaipio2013approximate}.
To avoid this, we employ the BAE approach, which has been used for incorporating various types of model uncertainties into the Bayesian inverse problem as well as the OED problem (see, \eg,~\cite{arridge:bae,nicholson:BAE,hanninen:qpat,alexanderian:baeoed,alexanderian:baeoed}). Here we outline the BAE approach as it pertains to our method.

As alluded to in the preceding section, the typical approach to OED within the Bayesian framework requires repeatedly solving the optimization problem  (\ref{eq MAP}) and constructing the (local) Laplace approximation\footnote{An alternative (approximate) approach is proposed in \cite{wu:fast} (see Section 3.4) where the Laplace approximation is carried out at samples from the prior, thus negating the need to compute any MAP estimates. However, this approximate approach is not well-suited for highly ill-posed inverse problems.}. While using a surrogate (or reduced order model approximation) to the PTO map can reduce the computational costs associated with computing the MAP estimate, the cost for accurate Monte Carlo (MC) approximation of the optimality criterion can still be significant since it requires approximating the MAP estimate for many data samples. 

In what follows, we let $\sfwd:\bothSpace\to\mathbb{R}^{\nData}$ denote a surrogate model. The starting point for the BAE approach is to rewrite the accurate relationship between the data and parameters (\ref{eq: main1}) as
\begin{equation}\label{eq:BAE1}
\begin{aligned}
\data&=\afwd(\both)+\noise=\sfwd(\both)+\bvecS{\eps}(\both)+\noise=\sfwd(\both)+\tot(\both),
\end{aligned}
\end{equation}
where $\mathbb{R}^{\nData}\ni\bvecS{\eps}(\both)\coloneqq\afwd(\both)-\sfwd(\both)$ is the {\em approximation error}, and $\mathbb{R}^{\nData}\ni\tot(\both)\coloneqq\bvecS{\eps}(\both)+\noise$ is the {\em total error}. At this point, a conditional Gaussian approximation for the approximation error is made, \ie,
\begin{align}\label{eq baeNorm}
\pi_{\eps\vert\both}(\eps\vert \both)\approx\mathcal{N}(\eps_{0\vert\both},\Cepsgboth) \sim \bvecS{\eps}(\both),
\end{align}
with (where we denote $\meaneps \coloneqq \mathbb{E}\eps$)
\begin{align}
\eps_{0\vert\both}=\meaneps+\Cepsboth\Cbothboth^{-1}(\both-\meanboth),\quad
    \Cepsgboth=\Cepseps-\Cepsboth\Cbothboth^{-1}\Cbotheps,\nonumber
    \label{eq:totErr_stats}
\end{align}
where $\Cbotheps$ and $\Cepsboth$ defined as in~\eqref{eq:crossCovs}. As a direct consequence of (\ref{eq baeNorm}) we have 
\begin{align}
\pi_{\tot\vert\both}(\tot\vert \both)\approx\mathcal{N}(\tot_{0\vert\both},\Ctotgboth),\quad\tot_{0\vert\both}=\meannoise+\eps_{0\vert\both},\quad \Ctotgboth=\Cnoise+\Cepsgboth
\end{align}
and we approximate Bayes' law (\ref{eq:RN_deriv}) by
\begin{equation}
\frac{d\mupostboth{}}{d\mupr} \propto \pi_{\tot\vert\both}(\tot\vert \both)\propto\exp\left\{-\frac{1}{2} \norm[\Ctotgboth^{-1}]{\data-\left(\sfwd(\both)+\tot_{0\vert\both}\right)}^2\right\}.
\label{eq:RN_derivBAE}
\end{equation} 
In some applications of the BAE the further approximation $\Cbotheps=0$ is employed, which is often referred to as {\em the enhanced error model}~\cite{arridge:bae,kaipio2013approximate}.

A particularly straight-forward choice of surrogate model is a (affine) linear model, \ie, take $\sfwd\in\mathcal{L}(\bothSpace,\mathbb{R}^{\nData})$. The standard choice for such a surrogate model is the (generalized) derivative of $\afwd$ evaluated at some nominal point $\both_\ast\in\bothSpace$, \ie, $\sfwd\both=\afwd(\both_\ast)+\mathsf{D}_\m\afwd(\both_\ast)(\both-\both_\ast)$. However, as long as the approximation error is taken into account using the BAE approach, the resulting (approximate) posterior is independent of the particular choice of linear surrogate, as stated in the following theorem from~\cite{nicholson:linearBAE}.

\begin{thm}\label{thm:1}
Let $\bothSpace$ be a Hilbert space with $v\in\bothSpace$ and assume $v$ has prior measure $\mu_v$ with mean and (trace-class) covariance operator given by $v_0$ and $\mathcal{C}_{\both\both}$, respectively. Suppose further that
\begin{align}
\data=\afwd(v)+\noise,\nonumber
\end{align}
where $\afwd:\bothSpace\rightarrow\mathbb{R}^{\nData}$ is the bounded PTO map, and $\noise\in\mathbb{R}^{\nData}$ has mean $\meannoise$ and covariance matrix $\Cnoise$.
Then the approximate likelihood model parameterized by $\sfwd\in\mathcal{L}(\bothSpace,\mathbb{R}^{\nData})$,
\begin{align}
\likelihood{\sfwd} \propto\exp{\left\{-\frac{1}{2}\norm[2]{L_{\eta\vert \both}(\data-\sfwd\both-\meantotgboth)}^2\right\}},\nonumber
\end{align}
where $L_{\eta\vert \both}^TL_{\eta\vert \both}=\Ctotgboth^{-1}$, and 
\begin{align}
\meantotgboth=\meannoise+\meaneps+\Cepsboth\Cbothboth^{-1}(\both-\meanboth), \quad \Ctotgboth=\Cnoise+\Cepseps-\Cepsboth\Cbothboth^{-1}\Cbotheps,\nonumber
\end{align}
is independent of the choice of $\sfwd$.
\end{thm}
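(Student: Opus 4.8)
The plan is to insert the definition $\eps(\both)=\afwd(\both)-\sfwd\both$ into each of the error statistics that appear in $\likelihood{\sfwd}$ and then verify that every term carrying a dependence on $\sfwd$ cancels, separately in the conditional covariance $\Ctotgboth$ (hence in the weight $L_{\eta\vert\both}$) and in the argument $\data-\sfwd\both-\meantotgboth$ of the norm. The one identity that organizes the whole computation is the centered decomposition
\begin{equation*}
\eps(\both)-\meaneps=\bigl(\afwd(\both)-\afwd_0\bigr)-\sfwd(\both-\meanboth),\qquad \afwd_0\coloneqq\mathbb{E}[\afwd(\both)],
\end{equation*}
which isolates the $\sfwd$-contribution linearly; it follows immediately from $\meaneps=\afwd_0-\sfwd\meanboth$ and the linearity of $\sfwd$. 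Applying the definition of the (cross-)covariance operator~\eqref{eq:crossCovs} to this decomposition gives
\begin{align*}
\Cepsboth &=\mathcal{C}_{\afwd\both}-\sfwd\Cbothboth, \qquad \Cbotheps=\Cepsboth^\ast=\mathcal{C}_{\both\afwd}-\Cbothboth\sfwd^\ast,\\
\Cepseps &=\mathcal{C}_{\afwd\afwd}-\mathcal{C}_{\afwd\both}\sfwd^\ast-\sfwd\mathcal{C}_{\both\afwd}+\sfwd\Cbothboth\sfwd^\ast,
\end{align*}
where $\mathcal{C}_{\afwd\afwd}$ and $\mathcal{C}_{\afwd\both}$ denote the covariance of $\afwd(\both)$ and its cross-covariance with $\both$, both independent of $\sfwd$.

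The decisive cancellation is in the covariance. Substituting the expressions above into $\Ctotgboth=\Cnoise+\Cepseps-\Cepsboth\Cbothboth^{-1}\Cbotheps$ and expanding the product, the three $\sfwd$-dependent terms $-\mathcal{C}_{\afwd\both}\sfwd^\ast$, $-\sfwd\mathcal{C}_{\both\afwd}$, $+\sfwd\Cbothboth\sfwd^\ast$ produced by $\Cepseps$ are reproduced with opposite sign by the cross-terms of $\Cepsboth\Cbothboth^{-1}\Cbotheps$, so they annihilate pairwise and leave the $\sfwd$-free Schur-complement expression
\begin{equation*}
\Ctotgboth=\Cnoise+\mathcal{C}_{\afwd\afwd}-\mathcal{C}_{\afwd\both}\Cbothboth^{-1}\mathcal{C}_{\both\afwd}.
\end{equation*}
In particular the weight $L_{\eta\vert\both}$ does not depend on $\sfwd$.

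Next I would treat the mean. Substituting $\Cepsboth=\mathcal{C}_{\afwd\both}-\sfwd\Cbothboth$ into $\meantotgboth=\meannoise+\meaneps+\Cepsboth\Cbothboth^{-1}(\both-\meanboth)$, the telescoping $-\sfwd\Cbothboth\Cbothboth^{-1}(\both-\meanboth)=-\sfwd(\both-\meanboth)$ combines with $\meaneps=\afwd_0-\sfwd\meanboth$ so that all $\sfwd$-terms assemble into exactly $-\sfwd\both$; consequently the shifted residual entering the norm is
\begin{equation*}
\data-\sfwd\both-\meantotgboth=\data-\meannoise-\afwd_0-\mathcal{C}_{\afwd\both}\Cbothboth^{-1}(\both-\meanboth),
\end{equation*}
in which the two copies of $\sfwd\both$ cancel identically. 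Since both the weight and the argument of the norm are $\sfwd$-free, the approximate likelihood $\likelihood{\sfwd}$ is independent of $\sfwd$, which is the assertion.

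I expect the main obstacle to be bookkeeping rather than conceptual: keeping the adjoints and the ordering of the noncommuting (and, in infinite dimensions, possibly unbounded) operators correct when expanding $\Cepsboth\Cbothboth^{-1}\Cbotheps$, and justifying that $\mathcal{C}_{\afwd\both}\Cbothboth^{-1}$ is meaningful, i.e.\ that $\range(\mathcal{C}_{\afwd\both}^\ast)\subseteq\range(\Cbothboth^{1/2})$ so that the relevant products define bounded operators on the Cameron–Martin space $\mathscr{E}$. Once the statistics are written in the centered form above, the cancellations are forced by the algebraic structure and the conclusion is immediate.
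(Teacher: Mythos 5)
Your proof is correct: expanding $\meaneps$, $\Cepsboth$, and $\Cepseps$ via $\eps(\both)-\meaneps=(\afwd(\both)-\afwd_0)-\sfwd(\both-\meanboth)$ and verifying the pairwise cancellation of the $\sfwd$-dependent terms in both $\Ctotgboth$ and the residual $\data-\sfwd\both-\meantotgboth$ is exactly the mechanism the paper relies on (it defers the proof to \cite{nicholson:linearBAE}, and its Remark~1 describes precisely this offset of the surrogate by the affine BAE correction, with the cross-covariance $\Cepsboth$ playing the essential role you identify). No gaps; your closing caveat about the well-definedness of $\mathcal{C}_{\afwd\both}\Cbothboth^{-1}$ is at the same level of rigor as the paper's own use of $\Cepsboth\Cbothboth^{-1}$.
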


As an immediate consequence, using (\ref{eq linGaus}), we have the following corollary.
\begin{cor}\label{cor:1}
The resulting (approximate) posterior is Gaussian and independent of the choice of linear model, with conditional mean and covariance given by
\begin{align}
\meanpostboth{}&=\Cpostboth{} (\sjact^*\Ctotgboth^{-1}(\data-\meannoise-\meaneps+\Cepsboth\Cbothboth^{-1}\meanboth)+\Cbothboth^{-1}\meanboth )\label{eq: BAEcm},\\
    \Cpostboth{} &=(\Cbothboth^{-1}+\sjact^*\Ctotgboth^{-1}\sjact)^{-1}\label{eq: BAEvar},
\end{align}
respectively, with $\mathcal{L}(\bothSpace,\mathbb{R}^{\nData})\ni\sjact\coloneqq\sfwd+\Cepsboth\Cbothboth^{-1}$.
\end{cor}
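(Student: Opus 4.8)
The plan is to take the $\sfwd$-parameterized likelihood $\likelihood{\sfwd}$ established in Theorem~\ref{thm:1}, recognize it after a purely algebraic rearrangement as the likelihood of a \emph{linear} Gaussian inverse problem, and then read off the Gaussian posterior directly from the known formulas~\eqref{eq linGaus}.

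First I would expand the residual in the exponent of $\likelihood{\sfwd}$. Substituting the conditional mean $\meantotgboth = \meannoise + \meaneps + \Cepsboth\Cbothboth^{-1}(\both - \meanboth)$ and grouping the terms that are affine in $\both$ gives
\begin{equation*}
\data - \sfwd\both - \meantotgboth
= \bigl(\data - \meannoise - \meaneps + \Cepsboth\Cbothboth^{-1}\meanboth\bigr)
- \bigl(\sfwd + \Cepsboth\Cbothboth^{-1}\bigr)\both .
\end{equation*}
Defining $\sjact \coloneqq \sfwd + \Cepsboth\Cbothboth^{-1}$, this identifies $\likelihood{\sfwd}$ with the likelihood of the linear Gaussian model $\tilde{\data} = \sjact\both + \tilde{\noise}$, in which the effective data is $\tilde{\data} = \data - \meannoise - \meaneps + \Cepsboth\Cbothboth^{-1}\meanboth$, the effective forward operator is $\sjact$, and the effective noise is zero-mean with covariance $\Ctotgboth$.

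Second, since the prior on $\both$ is unchanged and the effective model is linear and Gaussian, the posterior is Gaussian, and I would obtain its mean and covariance by substituting $\sfwd \mapsto \sjact$, $\Cnoise \mapsto \Ctotgboth$, $\data \mapsto \tilde{\data}$, and $\meannoise \mapsto 0$ into~\eqref{eq linGaus}. This reproduces exactly~\eqref{eq: BAEcm}--\eqref{eq: BAEvar}. The independence of the posterior from the choice of $\sfwd$ is then immediate from Theorem~\ref{thm:1}, which already asserts that $\likelihood{\sfwd}$ itself does not depend on $\sfwd$; since the prior is fixed, neither does the posterior.

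I would additionally make the independence transparent at the level of the operators. Using $\eps(\both) = \afwd(\both) - \sfwd\both$ together with linearity of $\sfwd$ and the cross-covariance convention~\eqref{eq:crossCovs}, a short computation gives $\Cepsboth = \mathcal{C}_{\afwd\both} - \sfwd\Cbothboth$, so that the $\sfwd$-terms cancel in $\sjact = \sfwd + \Cepsboth\Cbothboth^{-1} = \mathcal{C}_{\afwd\both}\Cbothboth^{-1}$; an analogous cancellation shows $\Ctotgboth = \Cnoise + \mathcal{C}_{\afwd\afwd} - \mathcal{C}_{\afwd\both}\Cbothboth^{-1}\mathcal{C}_{\both\afwd}$. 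Both expressions are manifestly $\sfwd$-free. The calculation is elementary; the only points needing care are the bookkeeping of the mean shifts when absorbing $\meantotgboth$ into $\tilde{\data}$, and the formal use of $\Cbothboth^{-1}$ in infinite dimensions, which is handled as usual by interpreting the precision-form expressions on the Cameron--Martin space $\mathscr{E}$ (recall $\meanboth \in \mathscr{E}$). I do not anticipate any substantive obstacle beyond this routine care.
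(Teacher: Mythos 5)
Your proposal is correct and follows essentially the same route the paper intends: the corollary is obtained by rewriting the BAE likelihood residual as $\tilde{\data}-\sjact\both$ and then reading off the Gaussian posterior from~\eqref{eq linGaus}, with independence of $\sfwd$ inherited from~\Cref{thm:1}. Your additional operator-level cancellation $\sjact=\mathcal{C}_{\afwd\both}\Cbothboth^{-1}$ is a nice explicit check, but it is just the content of the proof of~\Cref{thm:1} (cf.\ the remark following the corollary) rather than a new argument.
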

\begin{rmk}
   The independence of the approximate posterior to the choice of linearization is a consequence of employing a linear surrogate map and accounting for model uncertainty using the (full) BAE approach. Specifically, employing the BAE approach introduces an affine (in $\both$) correction to the approximate forward model, namely $\meaneps+\Cepsboth\Cbothboth^{-1}(\both-\meanboth)$. As such, any change in the linear surrogate map is canceled out/offset by the correction term. However, as seen in the proof of~\cite[Theorem 1]{nicholson:linearBAE}, the cross-covariance term $\Cepsboth$ is crucial for the result. If the cross-covariance is neglected (\eg, using the enhanced error model) the result would not follow.
\end{rmk}
In various inverse problems there are additional uncertain model parameters which are not estimated~\cite{kaipio2013approximate}. This is usually because $a$) these parameters are not of interest, or $b$) estimation of these parameters is too costly (or impossible). We will refer to these additional (to the primary parameters of interest) parameters as {\em auxiliary parameters}, though in the literature the terms nuisance parameters, secondary parameters, and latent parameters are also common. Neglecting the uncertainty in the auxiliary parameters while inferring the primary parameters typically results in misleading estimates and significantly underestimated uncertainty. The same can also be said for the OED process; neglecting the uncertainty in the auxiliary parameters while carrying out OED generally yields sub-optimal designs \cite{alexanderian:baeoed}.

The BAE approach can be applied as a means to account for uncertainty in auxiliary parameters during both the inference and OED stages. Specifically, let $\both = (\m,\aux)$, with $\m\in\mSpace$ the primary parameter $\aux\in\auxSpace$ the auxiliary parameter. Then assuming the accurate PTO model is $\afwd:\mSpace\times \auxSpace\to\mathbb{R}^\nData$, we can rewrite the relationship between the  data and the primary parameters as (cf. (\ref{eq:BAE1}))
\begin{equation}\label{eq:BAE2}
\begin{aligned}
\data&=\afwd(\m,\aux)+\noise =\sfwd(\m)+\bvecS{\eps}(\m,\aux)+\noise =\sfwd(\m)+\tot(\m,\aux).
\end{aligned}
\end{equation}

In keeping with the theme of the paper, we will be particularly interested in cases where the surrogate model is linear, \ie, $\sfwd\in\mathcal{L}(\mSpace,\mathbb{R}^{\nData})$. For such cases the following corollary, outlining the independence of the approximate posterior to the choice of linear(-ized) model can be useful.   

\begin{cor}
Let $\mSpace$ and $\auxSpace$ be Hilbert spaces with $m\in\mSpace$ and $\aux\in\auxSpace$. Assume $(\m,\aux)$ has (joint) prior measure $\mu_{\m,\aux}$ with mean and (trace-class) covariance operator given by $(\m_0,\aux_0)$ and $\mathcal{C}_{\m,\aux}=\begin{pmatrix}\mathcal{C}_{\m\m} & \mathcal{C}_{\m\aux}\\ \mathcal{C}_{\aux\m} & \mathcal{C}_{\aux\aux} \end{pmatrix}$, respectively. Suppose further that
\begin{align}
\data=\afwd(\m,\aux)+\noise,\nonumber
\end{align}
where $\afwd:\mSpace\times\auxSpace\rightarrow\mathbb{R}^{\nData}$ is the bounded PTO map, and $\noise\in\mathbb{R}^{\nData}$ has mean $\meannoise$ and covariance matrix $\Cnoise$.
Then the approximate Gaussian marginal posterior measure parameterized by $\sfwd\in\mathcal{L}(\mSpace,\mathbb{R}^{\nData})$, with mean and covariance given by 
\begin{align}
\m_{0\vert\datasub}^{\sfwd} = \mathcal{C}_{\m\vert\datasub}\left(\sjact^*\Gamma_{\totsub\vert\m}(\data-\meannoise-\meantot+\mathcal{C}_{\epssub\m}\mathcal{C}_{\m\m}^{-1}\meanm)+\mathcal{C}_{\m\m}^{-1}\meanm\right),\quad
\mathcal{C}_{\m\vert\totsub}^{\sfwd} =(\mathcal{C}_{\m\m}^{-1}+\sjact^*\mathcal{C}_{\totsub\vert\m}\sjact)^{-1},\nonumber
\end{align}
respectively, with $\mathcal{L}(\mSpace,\mathbb{R}^{\nData})\ni\sjact\coloneqq\sfwd+\mathcal{C}_{\totsub\m}\mathcal{C}_{\m\m}^{-1}$, is independent of the choice of $\sfwd$.
\end{cor}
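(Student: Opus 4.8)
The plan is to recognize that the marginalized model is structurally identical to the single-parameter setting of \Cref{thm:1}, with the primary parameter $\m$ in the role of $\both$ and the auxiliary parameter $\aux$ entering \emph{only} through the statistics of the approximation error. Fixing $\m$ as the inference target, I would work from the decomposition \eqref{eq:BAE2}, $\data=\sfwd(\m)+\tot(\m,\aux)$ with $\eps(\m,\aux)=\afwd(\m,\aux)-\sfwd(\m)$ and $\tot=\eps+\noise$, and compute all first and second moments of the error---$\meaneps$, $\mathcal{C}_{\epssub\m}$, $\mathcal{C}_{\epssub\epssub}$, and hence the conditionals $\mathcal{C}_{\epssub\vert\m}$ and $\Ctotgm$---by marginalizing over the joint prior $\mu_{\m,\aux}$ and the noise. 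The only genuinely new feature relative to \Cref{thm:1} is that the conditional error $\eps\vert\m$ is now random because of $\aux$; but the BAE step already models it as a conditional Gaussian, so the formal structure of the argument is unchanged.

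With this identification, \Cref{thm:1} applied with parameter $\m$ gives that the approximate marginal likelihood $\pi^{\sfwd}_{\mathrm{like}}$ is independent of the linear surrogate $\sfwd$, and the marginal posterior then follows from the linear--Gaussian conditioning formula \eqref{eq linGaus} exactly as in \Cref{cor:1}: absorbing the affine correction $\meaneps+\mathcal{C}_{\epssub\m}\mathcal{C}_{\m\m}^{-1}(\m-\meanm)$ into the forward model replaces $\sfwd$ by the shifted operator $\sjact=\sfwd+\mathcal{C}_{\epssub\m}\mathcal{C}_{\m\m}^{-1}$ and the noise by $\tot\vert\m\sim\mathcal{N}(\meantotgm,\Ctotgm)$, producing the stated expressions for $\m^{\sfwd}_{0\vert\datasub}$ and the marginal covariance.

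To make the $\sfwd$-independence explicit I would track how the error statistics transform under a change of surrogate $\sfwd\mapsto\sfwd'=\sfwd+\Delta$, $\Delta\in\mathcal{L}(\mSpace,\mathbb{R}^{\nData})$. Since $\sfwd$ acts only on $\m$, the error transforms affinely, $\eps'=\eps-\Delta\m$, so that $\meaneps'=\meaneps-\Delta\meanm$ and $\mathcal{C}_{\epssub'\m}=\mathcal{C}_{\epssub\m}-\Delta\mathcal{C}_{\m\m}$. Substituting into the shifted operator yields the decisive cancellation $\sjact'=\sfwd'+\mathcal{C}_{\epssub'\m}\mathcal{C}_{\m\m}^{-1}=\sfwd+\Delta+\mathcal{C}_{\epssub\m}\mathcal{C}_{\m\m}^{-1}-\Delta=\sjact$, and the same substitution shows the data-correction combination $\meannoise+\meaneps-\mathcal{C}_{\epssub\m}\mathcal{C}_{\m\m}^{-1}\meanm$ appearing in the mean is invariant. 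A Schur-complement expansion of $\mathcal{C}_{\epssub'\vert\m}=\mathcal{C}_{\epssub'\epssub'}-\mathcal{C}_{\epssub'\m}\mathcal{C}_{\m\m}^{-1}\mathcal{C}_{\m\epssub'}$ then shows every $\Delta$-dependent term cancels, giving $\mathcal{C}_{\epssub'\vert\m}=\mathcal{C}_{\epssub\vert\m}$ and hence $\M{\Gamma}_{\totsub'\vert\m}=\Ctotgm$. With $\sjact$, $\Ctotgm$, and the mean correction all invariant, the marginal posterior is independent of $\sfwd$.

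I expect the main obstacle to be the invariance of the conditional covariance $\mathcal{C}_{\epssub\vert\m}$: unlike the one-line cancellation in $\sjact$, this requires carefully expanding the quadratic Schur complement and verifying that the cross terms $-\Delta\mathcal{C}_{\m\epssub}$, $-\mathcal{C}_{\epssub\m}\Delta^{*}$ and $+\Delta\mathcal{C}_{\m\m}\Delta^{*}$ cancel exactly against those produced by $\mathcal{C}_{\epssub'\epssub'}$. This is precisely the step where retaining the full cross-covariance $\mathcal{C}_{\epssub\m}$ is indispensable---the enhanced error model approximation $\mathcal{C}_{\epssub\m}=0$ would break both the $\sjact$ cancellation and this covariance invariance, as noted in the remark following \Cref{cor:1}.
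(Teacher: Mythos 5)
Your proposal is correct and follows essentially the same route as the paper, which presents this corollary as an immediate consequence of \Cref{thm:1} applied with $\m$ in the role of $\both$ and the auxiliary parameter $\aux$ absorbed into the error statistics $\meaneps$, $\mathcal{C}_{\epssub\m}$, $\mathcal{C}_{\epssub\epssub}$. The explicit $\Delta$-cancellation you carry out (invariance of $\sjact$, of the data correction, and of the Schur complement $\mathcal{C}_{\epssub\vert\m}$) is exactly the mechanism the paper defers to the proof of Theorem~1 in the cited reference, and your computations check out.
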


\paragraph{Computing the approximation error statistics} In general the (second order) statistics of the approximation error are not known a priori. As such, these are computed using Monte Carlo simulations. More specifically, an ensemble of samples $\both^{(\ell)}=(\m^{(\ell)},\aux^{(\ell)})$ for $\ell=1,2,\dots,q$ are drawn from the joint prior and the associated approximation error is computed, \ie, 
\begin{align}
\eps^{(\ell)}=\afwd(\both^{(\ell)})-\sfwd\both^{(\ell)}.
\label{eq:approxErr}
\end{align}
From this ensemble of samples the sample means and (cross-)covariances can be computed:
\begin{align}\label{eq epsStats1}
\meaneps&\approx\frac{1}{q}\sum_{\ell=1}^q\eps^{(\ell)},\quad
\Cepseps\approx\frac{1}{q-1}\sum_{\ell=1}^q(\eps^{(\ell)}-\meaneps)\otimes(\eps^{(\ell)}-\meaneps),\\
\label{eq epsStats2}\Cepsboth&\approx\frac{1}{q-1}\sum_{\ell=1}^q(\eps^{(\ell)}-\meaneps)\otimes(\both^{(\ell)}-\meanboth),\\ \label{eq epsStats3}\meanboth&\approx\frac{1}{q}\sum_{\ell=1}^q\both^{(\ell)},\quad \Cbothboth\approx \frac{1}{q-1}\sum_{\ell=1}^q(\both^{(\ell)}-\meanboth)\otimes(\both^{(\ell)}-\meanboth).
\end{align}

As noted previously, the number of samples required to compute the required quantities in (\ref{eq epsStats1}) and (\ref{eq epsStats2}) could be reduced by employing a control variate approach~\cite[Chapter 2.3]{robert1999monte} with a judicious choice of control variate such as the (generalized) derivative.

\section{A data-driven approach to optimal experimental designs}\label{sec3}

Here we present our tractable approach to approximating optimal designs for nonlinear Bayesian inverse problems. In~\cref{sec3 1} we outline relevant material on sensor placement design problems and their associated computational challenges. 
Our black-box approach that overcomes these challenges for OED and marginalized OED is presented in~\cref{sec3 2}. 
In~\cref{subsec:discreteoed} we discuss an efficient numerical implementation of our method. 

\subsection{Optimal sensor placement for Bayesian inverse problems}\label{sec3 1}
Here we focus on inverse problems where data is collected at a set of sensors. In this setting, the OED goal is finding an optimal set of locations for sensor deployment in some prescribed measurement domain $\measSpace$. To this end, we fix a candidate set of locations $\{\sens_i\}_{i=1}^{\nSens}$ (with each $\sens_i \in \measSpace$) and define the design problem as that of finding an optimal subset of locations from this set\footnote{This discrete approach to sensor placement is common in OED literature~\cite{alexanderian:oedreview}, however a continuous formulation is also possible~\cite{neitzel:oed}.}. To distinguish between different designs or sensor arrangements, a binary weight $w_i \in \{0,1\}$ is assigned to each sensor location $\sens_i$. If $w_i = 1$, then data is collected using the sensor at location $\sens_i$, whereas a weight of $0$ implies no measurement is conducted at $\sens_i$. For general Bayesian inverse problems, the optimal arrangement of $k$ sensors is then defined through the optimal weight vector $\weight^* \in \{0,1\}^{\nSens}$, which minimizes an expected utility function, \ie, 
\begin{equation}
    \weight^* = \argmin_{\weight \in \{0,1\}^{\nSens}} \mathbb{E}_{\data|\weight}\left[U(\weight,\data) \right] \quad \text{s.t.} \sum_{i=1}^{\nSens}w_i = \nChosen. 
    \label{eq:EUtility}
\end{equation}
The utility function $U$ assesses the effectiveness of using any sensor combination (defined via the weight vector $\weight \in \{0,1\}^{\nSens}$) in solving the Bayesian inverse problem with measurement data $\data \in \mathbb{R}^{\nData}$. Since the posterior measure depends on the data, minimizing the expectation of the utility function ensures that the chosen sensor placement works well on average for all possible realizations of the data. The utility function is typically problem-specific and some common choices for infinite-dimensional Bayesian OED are described in~\cite{alexanderian:oedreview}. For illustrative purposes, we focus on the \emph{A-optimality} criterion, though our approach could be extended to other utility functions. The $k$-sensor A-optimal design minimizes the expected value of the average posterior pointwise variance. Letting $\mathcal{C}_{\both\vert\datasub,w}(\weight,\data)$ denote the posterior covariance operator at a fixed design $\weight$ and measurement data $\data$, the A-optimal design can be obtained by solving the minimization problem~\eqref{eq:EUtility} with $U(\weight,\data) \coloneqq \trace(\mathcal{C}_{\both\vert\datasub,w}(\weight,\data))$.

For Bayesian inverse problems governed by nonlinear PTO maps or involving non-Gaussian prior measures, there is no closed-form expression for the data-dependent posterior covariance operator. Thus efficient techniques for approximating the expected utility are required. In the infinite-dimensional setting, the approximation is often carried out by combining a Gaussian approximation to the posterior measure with a sample average approximation (SAA) to the expectation, \eg,  using a Laplace or Gauss-Newton approximation (see~\cite{alexanderian:nonlinoed,long:fastnonlin}). 
As mentioned in~\Cref{sec2 2}, employing these techniques to approximate the SAA to the expected utility requires computing the MAP estimator for many data samples.  

\paragraph{The linear Gaussian case}
For linear Bayesian inverse problems with Gaussian priors and additive Gaussian noise, the A-optimality criterion simplifies, and an explicit formula is available. For notational convenience, we introduce a weight matrix $\Weight \in \mathbb{R}^{\nChosenObs \times \nData}$. In our target applications, we assume measurements can be obtained at each sensor for $\nTimes \in \mathbb{N}$ different times, so $\nChosenObs = \nChosen\nTimes$ (where $\nChosen \leq \nSens$ denotes the number of selected sensors) and $\nData = \nTimes\nSens$. The matrix $\Weight$ is thus a block-diagonal matrix with each diagonal block $\M{P}_{\weight} \in \{0,1\}^{k \times \nSens}$ defined as a sub-matrix of $\text{diag}(\weight)$ where the rows corresponding to zero weights have been removed (as described in~\cite{alexanderian:baeoed}). 

With this notation and the assumptions $\both \sim \mathcal{N}(\meanboth,\mathcal{C}_{\both\both})$ and $\noise \sim \mathcal{N}(\meannoise,\Cnoise)$, the design-dependent relationship between the data and parameters,  
\begin{equation}
    \data(\weight) = \Weight\left(\sfwd\both+\noise\right), 
    \nonumber
\end{equation}
induces a design-dependent Gaussian posterior $\mupostbothW{}$ with conditional mean and covariance 
\begin{align}\label{eq linGausW}
\meanpostbothW{}(\weight,\data) &=\CpostbothW{}\left(\sfwd^\ast\bvecS{\Sigma}(\weight)\left(\data-\meannoise\right)+\mathcal{C}_{\both\both}^{-1}\meanboth\right),\quad
\CpostbothW{}(\weight) =(\mathcal{C}_{\both\both}^{-1}+\sfwd^\ast\bvecS{\Sigma}(\weight)\sfwd)^{-1}, 
\end{align}
where $\bvecS{\Sigma}(\weight) \coloneqq \Weight^T\left(\Weight \Cnoise \Weight^T\right)^{-1}\Weight$. For linear Bayesian inverse problems, the posterior covariance operator is independent of the observed data, \ie, the expectation in~\eqref{eq:EUtility} is extraneous. Hence the A-optimal design can be found by minimizing the simplified criterion: 
\begin{equation}
    \weight^\ast = \argmin_{\weight \in \{0,1\}^{\nSens}} \trace\left[ \CpostbothW{}(\weight) \right], \quad \text{s.t.} \sum_{i=1}^{\nSens}w_i = k. 
    \label{eq:linearA-opt}
\end{equation}

\subsection{Invariance of the uncertainty-aware optimal design to linearization choice}\label{sec3 2}
In this section, we outline our approach to optimal sensor placement for Bayesian inverse problems where the accurate (but computationally costly) PTO map has been replaced with some linear surrogate. As in~\Cref{sec2 3}, with $\sfwd\in\mathcal{L}(\bothSpace,\mathbb{R}^{\nData})$ we denote the surrogate map that approximates the accurate nonlinear operator $\afwd:\bothSpace\rightarrow\mathbb{R}^{\nData}$. 

To incorporate model error into the design-dependent Bayesian inverse problem, we follow the BAE approach outlined in~\Cref{sec2 3}. The data measured at any subset of the candidate locations can be modeled by 
\begin{align*}
    \data(\weight) = \Weight\left(\afwd(\both)+\noise\right)= \Weight\left(\sfwd\both+\noise\right)+\varepsilon(\both,\weight) \approx \Weight\sfwd\both+\tot\vert\both(\weight),
\end{align*}
where $\varepsilon(\both,\weight) = \Weight\left(\afwd(\both)-\sfwd\both\right)$, and the random variable $\tot\vert\both(\weight)\sim\mathcal{N}(\Weight\meantotgboth,\Weight\Ctotgboth\Weight^T)$ is used to approximate the total error at the chosen sensor locations. The total error mean $\meantotgboth$ and covariance $\Ctotgboth$ are defined in~\eqref{eq:totErr_stats} and~\eqref{eq baeNorm} and can be estimated using Monte Carlo sampling as described in the end of~\Cref{sec2 3}. Since the design matrix enters linearly into the model, the design-dependent approximate posterior is also independent of the particular choice of linearization. That is, we can extend the results of~\Cref{thm:1} and~\Cref{cor:1} to design-dependent Bayesian inverse problems. This is done in the following corollary.

\begin{cor}\label{cor:2}
    Let $\bothSpace$ be a Hilbert space and assume $\both \in \bothSpace$ has prior measure $\mupr$ with mean $\meanboth$ and trace-class covariance operator $\Cbothboth$. Assume that
    $$
    \data(\weight) = \Weight\left(\afwd(\both)+\noise\right), 
    $$
    where $\afwd:\bothSpace\rightarrow\mathbb{R}^{\nData}$ is a bounded PTO map, the matrix $\Weight \coloneqq \Weight(\weight) \in \{0,1\}^{\nChosenObs \times \nData}$ is defined as described in~\Cref{sec3 1} for any $\weight \in \{0,1\}^{\nSens}$, and $\noise \sim \mathcal{N}(\meannoise,\Cnoise)$. 

    Then for any $\weight$, the approximate posterior parameterized by some linear map $\sfwd \in \mathcal{L}(\bothSpace,\mathbb{R}^{\nData})$, 
    $$
    \mupostbothW{\sfwd}(\weight,\data) = \mathcal{N}(\meanpostbothW{\sfwd}(\weight,\data),\CpostbothW{\sfwd}(\weight)), 
    $$
    where 
    \begin{align*}
    \CpostbothW{\sfwd}(\weight) &= \left(\Cbothboth^{-1}+\widetilde{\sfwd}^{\ast}\Weight^T(\Weight\Ctotgboth\Weight^T)^{-1}\Weight\widetilde{\sfwd}\right)^{-1},\\
    \meanpostbothW{\sfwd}(\weight,\data) &= \CpostbothW{\sfwd}(\weight)\left(\widetilde{\sfwd}^{\ast}\Weight^T(\Weight\Ctotgboth\Weight^T)^{-1}\Weight\widetilde{\data}+\Cbothboth^{-1}\meanboth\right),\\
    \widetilde{\sfwd} &= \sfwd+\Cepsboth\Cbothboth^{-1}, \quad \widetilde{\data} = \data-\meantotgboth = \data-\meannoise-\meaneps+\Cepsboth\Cbothboth^{-1}\meanboth, \\
    \Ctotgboth &= \Cnoise+\Cepseps-\Cepsboth\Cbothboth^{-1}\Cbotheps, 
    \end{align*}
    is independent of the choice of $\sfwd$. In particular, for any $\sfwd_1,\sfwd_2 \in \mathcal{L}(\bothSpace,\mathbb{R}^{\nData})$, we have:
    $$
    \trace\left[\CpostbothW{\sfwd_1}(\weight)\right] = \trace\left[\CpostbothW{\sfwd_2}(\weight)\right].
    $$
\end{cor}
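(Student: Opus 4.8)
The plan is to reduce the claim to \Cref{thm:1} and \Cref{cor:1} by reading left-multiplication by the design matrix $\Weight$ as the definition of a new, lower-dimensional Bayesian inverse problem. Concretely, I would regard the design-dependent data model $\data(\weight)=\Weight(\afwd(\both)+\noise)$ as an instance of the setting of \Cref{thm:1} with accurate PTO map $\Weight\afwd:\bothSpace\to\mathbb{R}^{\nChosenObs}$, linear surrogate $\Weight\sfwd\in\mathcal{L}(\bothSpace,\mathbb{R}^{\nChosenObs})$, and additive noise $\Weight\noise\sim\mathcal{N}(\Weight\meannoise,\Weight\Cnoise\Weight^T)$. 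The associated approximation error is $\Weight\eps(\both)=\Weight(\afwd(\both)-\sfwd\both)$, so the first step is simply to track how the BAE statistics transform under $\Weight$.

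First I would verify, directly from the outer-product definition~\eqref{eq:crossCovs} together with the identities $(\Weight u)\otimes w=\Weight(u\otimes w)$ and $(\Weight u)\otimes(\Weight u)=\Weight(u\otimes u)\Weight^T$, that the approximation-error statistics of the reduced problem are exactly the weighted statistics of the original one: $\mathbb{E}[\Weight\eps]=\Weight\meaneps$, the cross-covariance $\mathcal{C}_{\Weight\eps,\both}=\Weight\Cepsboth$, and the error covariance $\mathcal{C}_{\Weight\eps,\Weight\eps}=\Weight\Cepseps\Weight^T$. From these it follows that the reduced conditional total-error covariance is $\Weight\Cnoise\Weight^T+\Weight(\Cepseps-\Cepsboth\Cbothboth^{-1}\Cbotheps)\Weight^T=\Weight\Ctotgboth\Weight^T$, and that the surrogate-corrected map furnished by \Cref{cor:1} for the reduced problem is $(\Weight\sfwd)+(\Weight\Cepsboth)\Cbothboth^{-1}=\Weight(\sfwd+\Cepsboth\Cbothboth^{-1})=\Weight\widetilde{\sfwd}$. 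Substituting these weighted quantities into the mean and covariance formulas of \Cref{cor:1}, and using $\data(\weight)=\Weight\data$ so that the reduced shifted data equals $\Weight\widetilde{\data}$, exactly reproduces the stated expressions for $\CpostbothW{\sfwd}(\weight)$ and $\meanpostbothW{\sfwd}(\weight,\data)$.

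With the reduction in place, invariance is immediate: \Cref{thm:1} asserts that the approximate posterior of the reduced problem is independent of \emph{any} surrogate in $\mathcal{L}(\bothSpace,\mathbb{R}^{\nChosenObs})$, so applying it to the two particular surrogates $\Weight\sfwd_1$ and $\Weight\sfwd_2$ shows that $\mupostbothW{\sfwd_1}(\weight,\data)=\mupostbothW{\sfwd_2}(\weight,\data)$ for every $\weight$. As an independent cross-check I would also argue directly on the original formulas: by \Cref{thm:1} the three building blocks $\widetilde{\sfwd}=\sfwd+\Cepsboth\Cbothboth^{-1}$, $\Ctotgboth$, and $\widetilde{\data}=\data-\meannoise-\meaneps+\Cepsboth\Cbothboth^{-1}\meanboth$ are each independent of $\sfwd$ (they collapse to objects depending only on $\afwd$, the prior, and the noise), and since $\Weight$, $\Cbothboth$, and $\meanboth$ carry no $\sfwd$-dependence, the stated $\CpostbothW{\sfwd}(\weight)$ and $\meanpostbothW{\sfwd}(\weight,\data)$ depend on $\sfwd$ only through these invariant combinations. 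The trace identity $\trace[\CpostbothW{\sfwd_1}(\weight)]=\trace[\CpostbothW{\sfwd_2}(\weight)]$ is then an immediate consequence of the equality of the covariance operators themselves.

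The step I expect to demand the most care is the transformation of the BAE statistics under $\Weight$, in particular confirming the contraction $\mathcal{C}_{\Weight\eps,\both}=\Weight\Cepsboth$ (and its adjoint $\Cbotheps\Weight^T$) via the outer-product identities above. This cross-covariance term is precisely what drives the invariance, as stressed in the remark following \Cref{cor:1}: were it dropped, as in the enhanced error model, the cancellation of the $\sfwd$-dependence would fail. I would emphasize that the reduction is not circular, since \Cref{thm:1} is quantified over all surrogates into $\mathbb{R}^{\nChosenObs}$ and is therefore applied only to the specific pair $\Weight\sfwd_1,\Weight\sfwd_2$, with no need for the family $\{\Weight\sfwd\}$ to exhaust that space.
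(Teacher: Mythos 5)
Your proposal is correct, and your concluding ``cross-check'' is in fact precisely the paper's own proof: the paper disposes of the corollary in one line by noting that, as a consequence of \Cref{thm:1}, the three building blocks $\widetilde{\sfwd}$, $\widetilde{\data}$ and $\Ctotgboth$ are independent of the choice of $\sfwd$, and since $\Weight$, $\Cbothboth$ and $\meanboth$ carry no $\sfwd$-dependence the stated formulas are invariant. Your primary route --- reading $\data(\weight)=\Weight(\afwd(\both)+\noise)$ as a fresh instance of the BAE framework with accurate map $\Weight\afwd$, surrogate $\Weight\sfwd$ and noise $\Weight\noise$, and verifying that the error statistics contract as $\Weight\meaneps$, $\Weight\Cepsboth$ and $\Weight\Cepseps\Weight^T$ --- does strictly more work: besides the invariance it actually \emph{derives} the displayed expressions for $\CpostbothW{\sfwd}(\weight)$ and $\meanpostbothW{\sfwd}(\weight,\data)$ from \Cref{cor:1}, which the paper's one-line proof takes as given. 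Your remark that \Cref{thm:1} need only be invoked at the two particular surrogates $\Weight\sfwd_1,\Weight\sfwd_2$, with no need for $\sfwd\mapsto\Weight\sfwd$ to be surjective onto $\mathcal{L}(\bothSpace,\mathbb{R}^{\nChosenObs})$, correctly closes the only potential loophole in the reduction; the price is the outer-product bookkeeping, which you identify and carry out correctly.
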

\begin{proof}
   Note that as a consequence of~\Cref{thm:1}, $\widetilde{\sfwd}$, $\widetilde{\data}$ and $\Ctotgboth$ are independent of the choice of $\sfwd$, and the result thus follows. 
\end{proof}

An immediate consequence of the invariance of the A-optimality criterion to the specific linearization is that the uncertainty-aware A-optimal designs are also independent of the specific choice of surrogate map $\sfwd$. Thus the trivial map, the {\em zero operator} $\mathsf{O}\colon \both \mapsto \bvecS{0}$, leads to the same optimal sensor placements as a more complicated tailored surrogate when model error is incorporated using the BAE approach. This is the key insight that facilitates our greedy, derivative-free approach to optimal experimental design, which we describe in~\Cref{subsec:discreteoed}.   

\begin{rmk}
As mentioned previously, in practice, the results in~\Cref{thm:1} and thus~\Cref{cor:2} only hold asymptotically as the number of samples used in the MC approximation of the model error statistics go to infinity. In particular, in~\cite{nicholson:linearBAE}, it was shown that the use of the zero map led to posteriors with underestimated pointwise variance when a small number of samples (typically $< 1000$) was used. Thus, if the cost of evaluating $\afwd$ significantly limits $N$, it is advisable to use a surrogate $\sfwd$ that is highly correlated with $\afwd$. 
\end{rmk}

\paragraph{Uncertainty-aware marginalized OED}\label{sec3 3}

As alluded to in~\Cref{sec2 3}, in many inverse problems there may be uncertain auxiliary parameters which are not estimated. In such settings, designs are chosen to minimize uncertainty in the marginal posterior for the primary parameters-of-interest.
It is straightforward to extend our uncertainty-aware approach to approximate optimal sensor placements for such marginalized design problems.
Decompose $\both = (\m,\aux)$ into a primary parameter-of-interest $\m \in \mSpace$ and an auxiliary parameter $\aux \in \auxSpace$, and let $\Pi_{\mSpace}:\bothSpace\to\mSpace$ denote linear a projection operator onto $\mSpace$.  
The approximate marginal posterior measure, parameterized by some linear surrogate $\sfwd$, is $
\mupostmW{\sfwd} \sim \mathcal{N}(\Pi_{\mSpace}\meanpostbothW{\sfwd},\Pi_{\mSpace}\CpostbothW{\sfwd}\Pi_{\mSpace}^{\ast}) \eqqcolon \mathcal{N}(\m_{0\vert\datasub,w}^{\sfwd},\CpostmW{\sfwd}),
$
and the optimal sensor placements for the marginalized design problem thus satisfy
\begin{equation}
    \weight^* = \argmin_{\weight \in \{0,1\}^s} \trace\left[\CpostmW{\sfwd}(\weight)\right], \quad \text{s.t.} \sum_{i=1}^{\nSens} w_i = \nChosen.
    \label{eq:margOED}
\end{equation}
Since $\meanpostbothW{\sfwd}$ and $\CpostbothW{\sfwd}$ are independent of the choice of $\sfwd$ (using~\Cref{cor:2}), as is the projection operator $\Pi_{\mSpace}$, it is straightforward to see that the (approximate) optimal design for the marginalized problem is independent of the linear surrogate $\sfwd$. 
As a consequence, the zero operator is also a valid surrogate for the marginalized OED problem. 

\subsection{Efficient computation of the greedy A-optimal designs}\label{subsec:discreteoed}

In this section, we outline a greedy procedure for computing A-optimal sensor placements for both design problems~\eqref{eq:linearA-opt} and~\eqref{eq:margOED}. Although we emphasize that our approach could be used with the zero map surrogate (and this is what we will use for the numerical examples in~\Cref{sec4} and~\Cref{sec5}), for generality we present the procedure using an arbitrary linear surrogate map $\sfwd$. 

For the remainder of this section, let $\bothdisc \in \mathbb{R}^{\nboth}$, $\afwddisc \colon \mathbb{R}^{\nboth} \rightarrow \mathbb{R}^{\nData}$, and $\sfwddisc \in \mathcal{L}(\mathbb{R}^{\nboth},\mathbb{R}^{\nData})$ denote the discretized parameter $\both$, accurate PTO map $\afwd$, and linear surrogate PTO map $\sfwd$, respectively. The discretized uncertainty-aware model is thus
\begin{align*}
    \data(\weight) = \Weight(\sfwddisc\bothdisc + \tot\vert\bothdisc), 
\end{align*}
where $\tot\vert\bothdisc \sim \mathcal{N}(\meantotgboth,\Ctotgboth)$, with $\meantotgboth = \meannoise+\meaneps-\M{C}_{\epssub\both}\M{C}_{\both\both}^{-1}\left(\bothdisc-\bothdisc_0\right)$ and $\Ctotgboth = \Cnoise+\Cepseps-\M{C}_{\epssub\both}\M{C}_{\both\both}^{-1}\M{C}_{\both\epssub}$. Here, the discretized statistics $\bothdisc_0,\, \meaneps,\, \M{C}_{\both\both},\, \M{C}_{\both\epssub} = \M{C}^{\ast}_{\epssub\both}$, and $\Cepseps$ are computed via Monte Carlo (as in~\eqref{eq epsStats1}) using the discretized operators $\sfwddisc$ and $\afwddisc$. Letting $\widetilde{\sfwddisc} = \sfwddisc+\M{C}_{\epssub\both}\M{C}_{\both\both}^{-1}$, the corresponding discretized posterior covariance operator is then 
\begin{align}
    \CpostbothWdisc{\sfwddisc}(\weight) &= \left(\M{C}_{\both\both}^{-1} + \widetilde{\sfwddisc}^{\ast}\Weight^T\left(\Weight\Ctotgboth\Weight^T\right)^{-1}\Weight\widetilde{\sfwddisc}\right)^{-1} \\
    &= \M{C}_{\both\both} - \left(\M{C}_{\both\both}{\sfwddisc}^{\ast}+\M{C}_{\both\eps}\right)\Weight^T\M{Q}(\Weight)\Weight\left(\sfwddisc\M{C}_{\both\both}+\M{C}_{\eps\both}\right)
    \label{eq:postSMW}
\end{align}
where $\widetilde{\sfwddisc}^{\ast}$ denotes the adjoint of $\widetilde{\sfwddisc}$, $\M{Q}(\Weight) \coloneqq \left[\Weight\left(\Ctotgboth+\widetilde{\sfwddisc}\M{C}_{\both\both}\widetilde{\sfwddisc}^{\ast}\right)\Weight^T\right]^{-1}$ and the last equation follows from the Woodbury matrix identity and $\widetilde{\sfwddisc}\M{C}_{\both\both} = \sfwddisc\M{C}_{\both\both}+\M{C}_{\epssub\both}$. 

Using the last equality and the cyclic property of the trace operator, we have
\begin{align*}
\trace \left[ \CpostbothWdisc{\sfwddisc}(\weight) \right] = \trace\left[\M{C}_{\both\both}\right] - \trace\left[\M{Q}(\Weight)\Weight \left(\sfwddisc\M{C}_{\both\both}+\M{C}_{\epssub\both}\right) \left(\M{C}_{\both\both}{\sfwddisc}^{\ast}+\M{C}_{\both\epssub}\right)\Weight^T \right].
\end{align*}
Thus, letting $\M{K}(\weight) \coloneqq \M{Q}(\Weight)\Weight \left(\sfwddisc\M{C}_{\both\both}+\M{C}_{\epssub\both}\right) \left(\M{C}_{\both\both}{\sfwddisc}^{\ast}+\M{C}_{\both\epssub}\right)\Weight^T $ the A-optimal design satisfies
\begin{equation}
    \weight^* = \argmax_{\weight \in \{0,1\}^{\nSens}} \trace\left[\M{K}(\weight)\right] \quad \text{s.t.} \sum_{i=1}^{\nSens} w_i = \nChosen.
    \label{eq:discAopt}
\end{equation}
This reformulation of the A-optimality criterion is computationally advantageous if $\nboth \gg \nData$, which is often the case for inverse problems, particularly when dealing with discretizations of infinite-dimensional parameters $\both$. 

\paragraph{Discrete marginalized A-optimal design problem} 
In the case where $\both = (\m,\aux)$, and we are interested in finding designs that optimize the marginalized A-optimality criterion~\eqref{eq:margOED}, we again let $\bothdisc = [\mdisc,\auxdisc]$, with $\mdisc \in \mathbb{R}^{\nm}$ and $\auxdisc \in \mathbb{R}^{\naux}$ (with $\nboth = \nm+\naux$) denoting the discretized primary and auxiliary parameters, respectively. Additionally, we decompose ${\sfwddisc} = [\M{F}_{\m} \quad \M{F}_{\aux}]$, $\M{C}_{\epssub\both} = [\M{C}_{\epssub\m}\quad\M{C}_{\epssub\aux}]$, and $\M{C}_{\both\both} = \begin{bmatrix}
 \M{C}_{\m\m}
 &  \M{C}_{\m\aux} \\
\M{C}_{\aux\m}
 &
 \M{C}_{\aux\aux}
\end{bmatrix}$. Using~\eqref{eq:postSMW}, it is straightforward to see that 
\begin{equation}
    \M{C}_{\m\vert\datasub,w}(\weight) = \M{C}_{\m\m}-\left(\M{C}_{\m\m}\M{F}^{\ast}_{\m}+\M{C}_{\m\aux}\M{F}^{\ast}_{\aux}+\M{C}_{\m\epssub}\right)\Weight^T\M{Q}(\Weight)\Weight\left(\M{F}_{\m}\M{C}_{\m\m}+\M{F}_{\aux}\M{C}_{\aux\m}+\M{C}_{\epssub\m}\right),\nonumber.
\end{equation}
Denoting $\M{K}_{\m}(\weight) \coloneqq \M{Q}(\Weight)\Weight(\M{F}_{\m}\M{C}_{\m\m}+\M{F}_{\aux}\M{C}_{\aux\m}+\M{C}_{\epssub\m})(\M{C}_{\m\m}\M{F}^{\ast}_{\m}+\M{C}_{\m\aux}\M{F}^{\ast}_{\aux}+\M{C}_{\m\epssub})\Weight^T$, 
the marginal A-optimal design thus satisfies
\begin{equation}
\weight^* = \argmax_{\weight \in \{0,1\}^{\nSens}} \trace\left[\M{K}_{\m}(\weight)\right] \quad \text{s.t.} \sum_{i=1}^{\nSens} w_i = \nChosen.
    \label{eq:discMargAopt}
\end{equation}

Note that~\eqref{eq:discAopt} and~\eqref{eq:discMargAopt} define NP-hard optimization problems. Approaches to solving this challenging problem often involve relaxation techniques~\cite{alexanderian:oed,herman:reweightl1,yu:sumup}, although relaxation-free approaches are also available (see, \eg,~\cite{attia:stochasticOED}). However, here we take a greedy approach to finding the placement of $k$ sensors that approximately solves the optimization problem~\eqref{eq:discAopt}. In the greedy approach to sensor placement, sensors that lead to the largest decrease in the A-optimality criterion are chosen one at a time from the (diminishing) candidate set. The procedure is outlined in~\cref{alg:oed}. The greedy approach to sensor placement generally yields sub-optimal designs. However, it has been shown to produce reasonable designs in practice~\cite{alexanderian:marginal,alexanderian:baeoed,jagalur:batch}. Additionally, the greedy approach to sensor placement is completely hands-off (since no derivative information of the objective is required). Thus, when combined with the zero map surrogate, $\sfwd \equiv \zero$, our proposed approach is a black box that only requires sample parameter and data pairs.

At each step $\ell = 1,\ldots,\nChosen$, the objective function defined in~\eqref{eq:discAopt} (resp.~\eqref{eq:discMargAopt} in the marginal OED case) needs to be evaluated $\nSens-(\ell-1)$ times. Once the matrices $\Ctotgboth+\widetilde{\M{F}}\M{C}_{\both\both}\widetilde{\M{F}}^{\ast}$ and \linebreak $\left(\sfwddisc\M{C}_{\both\both}+\M{C}_{\epssub\both}\right) \left(\M{C}_{\both\both}{\sfwddisc}^{\ast}+\M{C}_{\both\epssub}\right)$ (resp. $(\M{F}_{\m}\M{C}_{\m\m}+\M{F}_{\aux}\M{C}_{\aux\m}+\M{C}_{\epssub\m})(\M{C}_{\m\m}\M{F}^{\ast}_{\m}+\M{C}_{\m\aux}\M{F}^{\ast}_{\aux}+\M{C}_{\m\epssub})$) are precomputed, each evaluation of the objective only requires solving a system with a matrix of dimension $\ell\nTimes$ (where $\nTimes$ denotes the number of times measurements are collected at each sensor) and summing up the diagonal entries. The computational bottleneck for most problems with PDE-dependent PTO maps will be the computation of the statistics (\eqref{eq epsStats1}-\eqref{eq epsStats3}) required for modeling the error using the BAE approach, since this requires solving $q$ PDEs with the accurate (costly) PTO map. We note that for our proposed zero map surrogate, precomputing the matrices in question is straightforward. In particular, $\M{Q} = \left[\M{W}\left( \Cnoise + \Cepseps \right)\M{W}^T\right]^{-1}$ and the equations for $\M{K}(\weight)$ and $\M{K}_{m}(\weight)$ simplify:  
\begin{align*}
\M{K}(\weight) &= \left[\M{W}\left( \Cnoise + \Cepseps \right)\M{W}^T\right]^{-1}\Weight\M{C}_{\epssub \both}\M{C}_{\both \epssub}\Weight^T \\
\M{K}_{m}(\weight) &= \left[\M{W}\left( \Cnoise + \Cepseps \right)\M{W}^T\right]^{-1}\Weight\M{C}_{\epssub m}\M{C}_{m \epssub}\Weight^T. 
\end{align*}
However, if $\M{F}$ is defined via a discretized (linear) PDE, then some approximation to $\M{F}$ may be required. There are various ways of constructing such approximations efficiently, \eg, using randomized matrix algorithms~\cite{halko:randsvd}. 
\begin{algorithm}[ht]
\caption{Algorithm for finding A-optimal uncertainty-aware greedy sensor placements}\label{alg:oed}
\hspace*{\algorithmicindent} \textbf{Input} Target number of sensors $k$, prior distribution $\mupr$, mean $\meannoise$ and covariance $\Cnoise$ of the observational noise, accurate and (linear) surrogate parameter-to-observable maps $\afwddisc$, $\sfwddisc$ \\
\hspace*{\algorithmicindent} \textbf{Output} Optimal weight vector $\weight^*$
\begin{algorithmic}[1]
\STATE Create the tuple $\{(\bothdisc^{(\ell)}, \eps^{(\ell)})\}_{\ell=1}^{N}$ of parameter samples $\bothdisc^{(\ell)} \sim \mupr $ and corresponding approximation error samples $\eps^{(\ell)}$ using~\Cref{eq:approxErr} 
\STATE Compute sample means $\meaneps, \bothdisc_0$ and (cross-)covariances $\Cepseps, \M{C}_{\eps\both},\M{C}_{\both\eps}, \M{C}_{\both\both}$ using~\eqref{eq epsStats1}-\eqref{eq epsStats3}
\STATE Precompute $\Ctotgboth+\widetilde{\M{F}}\M{C}_{\both\both}\widetilde{\M{F}}^{\ast}$ and $\left(\sfwddisc\M{C}_{\both\both}+\M{C}_{\epssub\both}\right) \left(\M{C}_{\both\both}{\sfwddisc}^{\ast}+\M{C}_{\both\epssub}\right)$ \\ \hspace{20mm}\{or $(\M{F}_{\m}\M{C}_{\m\m}+\M{F}_{\aux}\M{C}_{\aux\m}+\M{C}_{\epssub\m})(\M{C}_{\m\m}\M{F}^{\ast}_{\m}+\M{C}_{\m\aux}\M{F}^{\ast}_{\aux}+\M{C}_{\m\epssub})$ in marginal case\}
\STATE Initialize candidate and chosen index sets: $\mathcal{I}_{\text{chosen}} \gets \varnothing$, $\mathcal{I}_{\text{candidate}} \gets \{1,\ldots,\nSens\}$
\FOR{$i = 1\cdots k$}
\STATE $\weight^*(\mathcal{I}_{\text{chosen}}) = 1$
\STATE $I_i = \argmax_{j \in \mathcal{I}_{\text{candidate}}} \trace\big[\M{K}(\weight^*+\M{e}_j)\big]$ \COMMENT{$\M{e}_j \in \mathbb{R}^{\nSens}$ denotes the $j$-th coordinate vector} 
\\ \hspace{30mm} \{or $\trace\big[\M{K}_{\m}(\weight^*+\M{e}_j)\big]$ in marginal case\}
\STATE $\mathcal{I}_{\text{chosen}} \gets \mathcal{I}_{\text{chosen}} \cup I_i$
\STATE $\mathcal{I}_{\text{candidate}} \gets \mathcal{I}_{\text{candidate}} \setminus \mathcal{I}_{\text{chosen}}$
\ENDFOR
\RETURN{$\weight^*$}
\end{algorithmic}
\end{algorithm}
In the next two sections we illustrate the application of our proposed linearize-then-optimize approach for computing optimal designs for two inverse problems. Specifically, for an idealized subsurface flow problem and for a tsunami detection problem.

\section{Numerical example 1: Idealized subsurface flow}\label{sec4}
The first numerical example we consider is an idealized subsurface flow problem. Specifically, we consider the inverse problem of estimating the the (log-)boundary flux $\m$ and the (log-)permeability field $\aux$ in a two dimensional subsurface aquifer from noisy (point-wise) measurements of the (head) pressure $u$. The boundary flux and permeability field are related to the pressure by the steady state Darcy flow equation within the aquifer:
\begin{equation}
\begin{aligned}
    \label{eq poisFwd}
\nabla\cdot(\exp{(\aux)} \nabla u)&=0\quad &&\text{in }\Omega,\\
    (\exp{(\aux)}\nabla u)\cdot \bs{n}&=\exp{(\m)}\quad &&\text{on }\Gamma_{\rm T},\\
    (\exp{(\aux)}\nabla u)\cdot \bs{n}&=-1\quad &&\text{on }\Gamma_{\rm B},\\
    u&=0\quad &&\text{on }\Gamma_{\rm L},\\
    (\exp{(\aux)}\nabla u)\cdot \bs{n}&=0\quad &&\text{on }\Gamma_{\rm R},
    \end{aligned}
\end{equation}
with $\bs{n}$ denoting the outward normal vector. For a more in-depth discussion on the physical interpretation of the problem, we refer to ~\cite{carrera1986flow1,carrera1986flow2,carrera1986flow3}. Similar problems are commonly considered as benchmark tests~\cite{kovachki2023neural,iglesias2013ensemble,alexanderian:nonlinoed}. 

In the current work we employ a level set approach\cite{dunlop2017hierarchical,iglesias2016bayesian} for the parametrization of the permeability $\aux$. Specifically, we take 
\begin{align}\label{eq: levset}
\aux=\Phi(\psi)=\sum_{i=1}^L\mathsf{z}_i \chi_{D_i(\psi)},  
\end{align}
where $\chi$ is the indicator function, $D_i(\psi)=\{x\in \Omega \, | \, \ell_{i-1}\leq \psi(x) \leq \ell_i \}$, and $\mathsf{z}_i\in\mathbb{R}$ for $i=1,2,\dots,L$ are the discrete ($\log$-)permeability values. Thus $\Phi$ maps the so-called level set field $\psi$ to the permeability $\aux$. We make explicit that although the permeability is the parameter of interest, inference and the associated optimal experimental design  problem are carried out for the the level set field $\psi$.

\subsection{Problem set up}
We consider the problem in the unit square; $\Omega=(0,1)\times(0,1)$, and set $\Gamma_{\rm T}$, $\Gamma_{\rm B}$, $\Gamma_{\rm L}$ and $\Gamma_{\rm R}$ as the top, bottom and left and right boundaries, respectively, \ie,  $\Gamma_{\rm T}:=(0,1)\times\{1\}$, $\Gamma_{\rm B}:=(0,1)\times\{0\}$,  $\Gamma_{\rm L}:=\{0\}\times (0,1)$, and $\Gamma_{\rm R}:=\{1\}\times (0,1)$. A Galerkin finite element method is used to solve the forward problem (\ref{eq poisFwd}). More specifically, the domain is discretized into 3042 triangular elements and   1600 continuous piece-wise linear basis functions. Thus the pressure $u$ as well as the permeability have dimension 1600. On the other hand, the unknown log-boundary flux is 40 dimensions. We impose a single level set threshold at $\ell=0$ on the level set field, with the permeability values (see (\ref{eq: levset})) $\mathsf{z}_1=0$ when $\psi\leq 0$, and $\mathsf{z}_2=1$ when $\psi>0$. Finally, we take a total of 1600  candidate sensor locations which are distributed in an equally spaced 40 by 40 grid-like array (see Figure \ref{fig:DarcyLocs1}). In Figure~\ref{fig:darcy1} we show the true underlying level set field $\psi_{\rm true}$, the resulting true permeability $\aux_{\rm true}=\Phi(\psi_{\rm true})$, the true boundary flux $\m_{\rm true}$, and the resulting true pressure $u_{\rm true}$.
\begin{figure}[t!]
\centering
\begin{tikzpicture}
\node[inner sep=0pt] (a) at (0,0)
{\includegraphics[height=.2\textwidth]{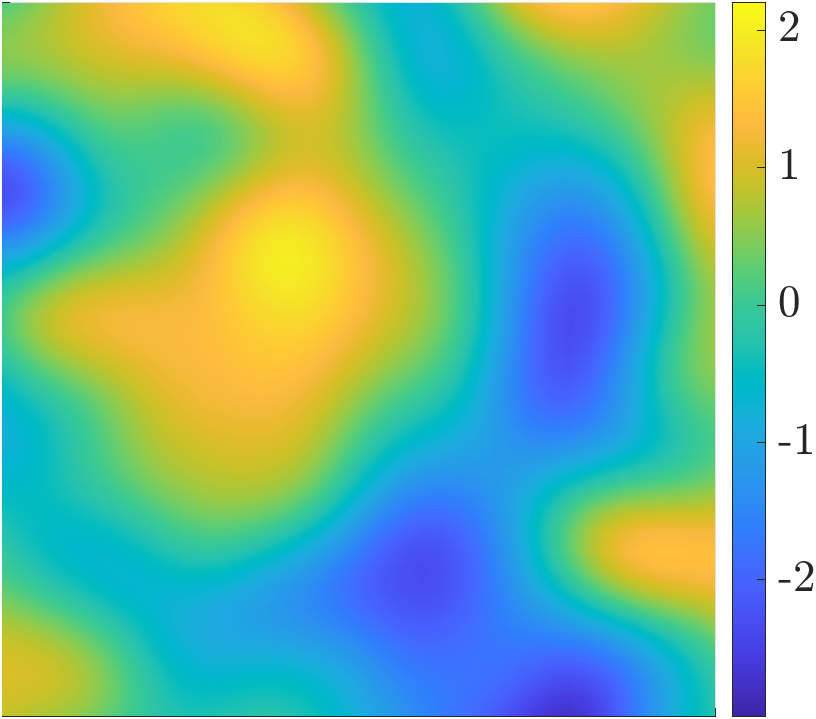}};
\node[inner sep=0pt] (a) at (3.9,0)
{\includegraphics[height=.2\textwidth]{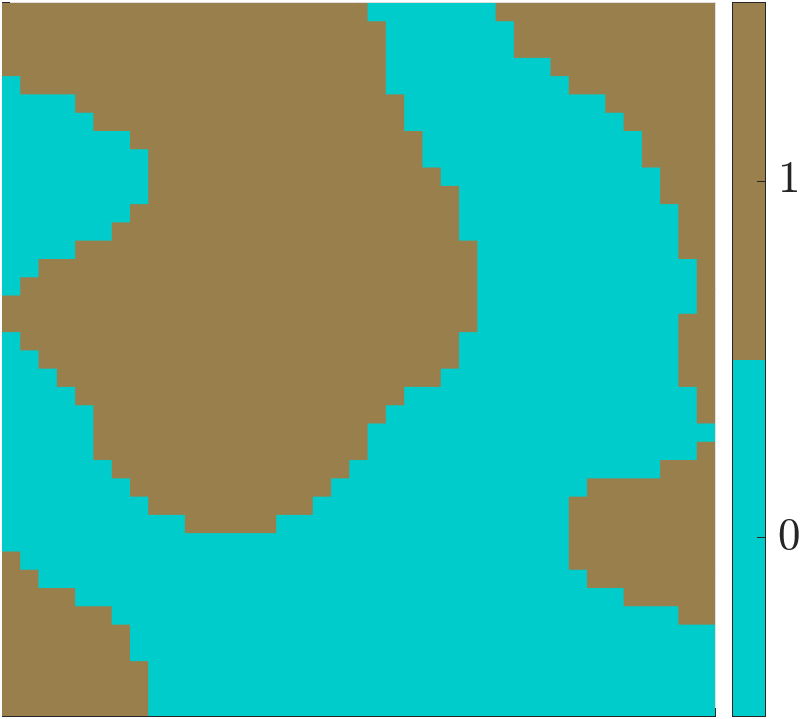}};
\node[inner sep=0pt] (a) at (7.8,0)
{\includegraphics[height=.2\textwidth]{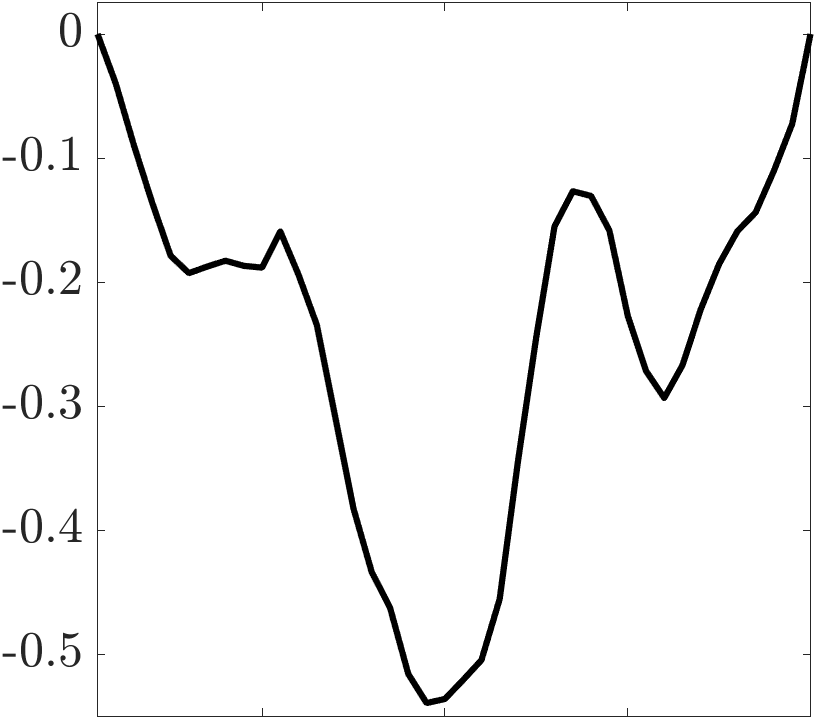}};
\node[inner sep=0pt] (a) at (11.8,0)
{\includegraphics[height=.2\textwidth]{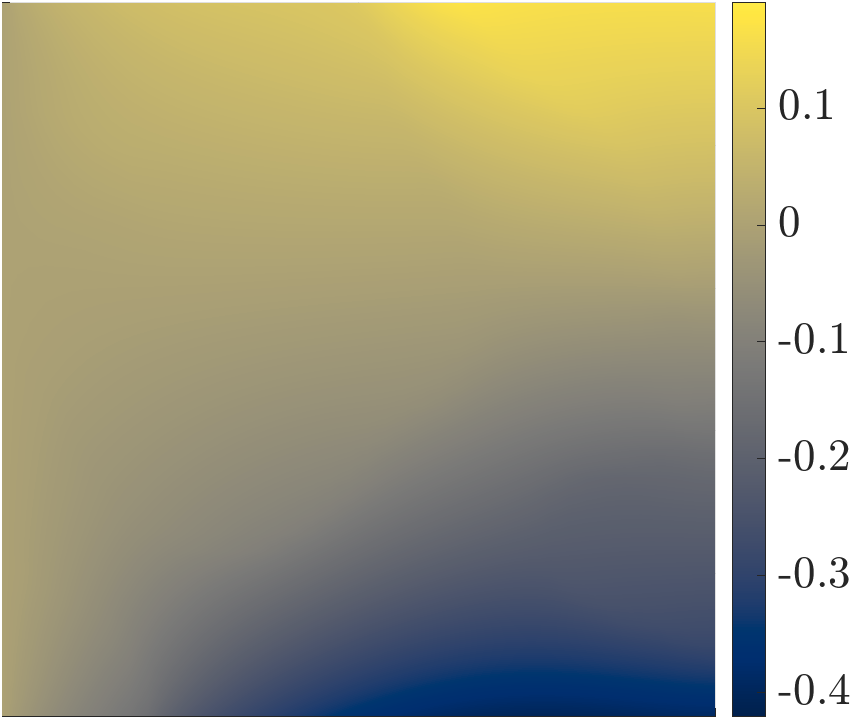}};
\end{tikzpicture}
\caption{The true underlying level set field $\psi_{\rm true}$ (far left), the induced true ($\log$-) permeability $\aux_{\rm true}=\Phi(\psi_{\rm true})$ (center left), the true ($\log$-) boundary flux $\m_{\rm true}$ through $\Gamma_{\rm T}$ (center right), and the resulting pressure $u_{\rm true}$ (far right).}
\label{fig:darcy1}
\end{figure}

\paragraph{Prior models} 
We assume the parameters $\m$ and $\aux$ are a priori independent, i.e., $\mu_{\m,\aux}=\mu_\m\mu_\aux$.  Furthermore, the (marginal) prior measure for the permeability is the push-forward of the prior measure of the level set field \cite{dunlop2017hierarchical,dunlop2021stability,iglesias2016bayesian}, \ie,  $\mu_\aux=\Phi_{\sharp} \mu_\psi$. As such, the joint prior used here is of the form $\mu_{\m,\psi}=\mu_m\mu_\psi$.

We postulate Gaussian prior measures for both the top boundary flux and the underlying level set field of the permeability, \ie, 
$\mu_\m=\mathcal{N}(\m_0,\mathcal{C}_{\msub\msub})$ and $\mu_\psi=\mathcal{N}(\psi_0,\mathcal{C}_{\psi\psi})$. The covariance operators are defined by
\begin{align}
\mathcal{C}_{\msub\msub}&=\mathcal{A}^{-2},\quad \text{with }\mathcal{A}=c_2-c_3\frac{d^2}{dx^2},\\
\mathcal{C}_{\psi\psi}v(x)&=\int_{\Omega}c(x,y)v(y)\,dy\quad\text{with } c(x,y)=\exp{\left(-\frac{\norm[\mathbb{R}^2]{x-y}^2}{c_1^2}\right)},\label{eq: covW}
\end{align}
with $v(x)\in L^2(\Omega)$ and the domain of $\mathcal{A}$ being $\mathcal{D}(\mathcal{A}):=H^2_0(\Gamma_{\rm T})$. The parameter $c_1$ controls the characteristic length scale of samples of $\psi$, while $c_2$ and $c_3$ control correlation length and variance of the samples of $\m$. Such covariance operators are fairly standard for subsurface flow Bayesian inverse problems~\cite{iglesias2016bayesian,holbach2023bayesian,alexanderian:nonlinoed}. In the current example we set $c_1=\frac{1}{8}$, $c_2=2$, and $c_3=8\times 10^{-2}$. In Figure~\ref{fig:darcy2} we show several prior samples $\m$, samples pf $\psi$ along with the corresponding samples of the permeability $\aux$.
\begin{figure}[t!]
\centering
\begin{tikzpicture}
\node[inner sep=0pt] (a) at (0,0)
{\includegraphics[height=.2\textwidth]{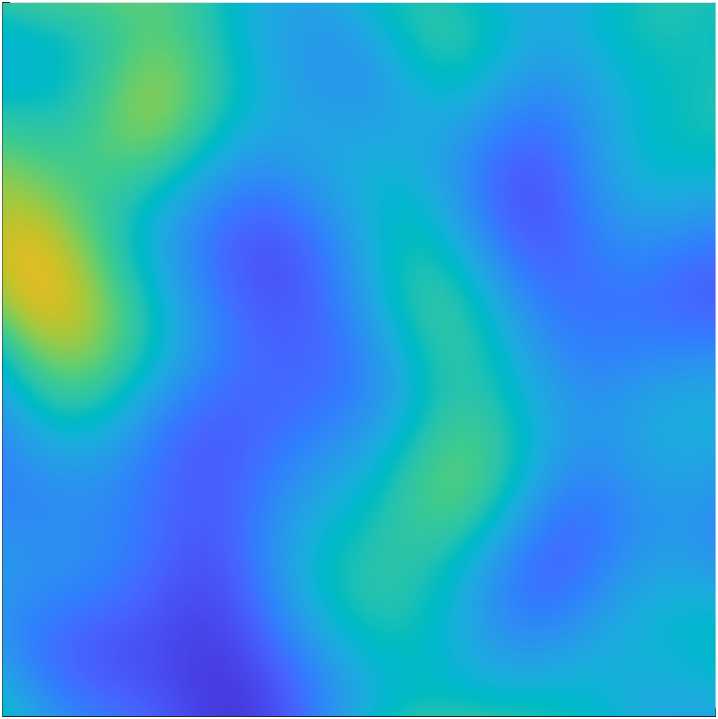}};
\node[inner sep=0pt] (a) at (4,0)
{\includegraphics[height=.2\textwidth]{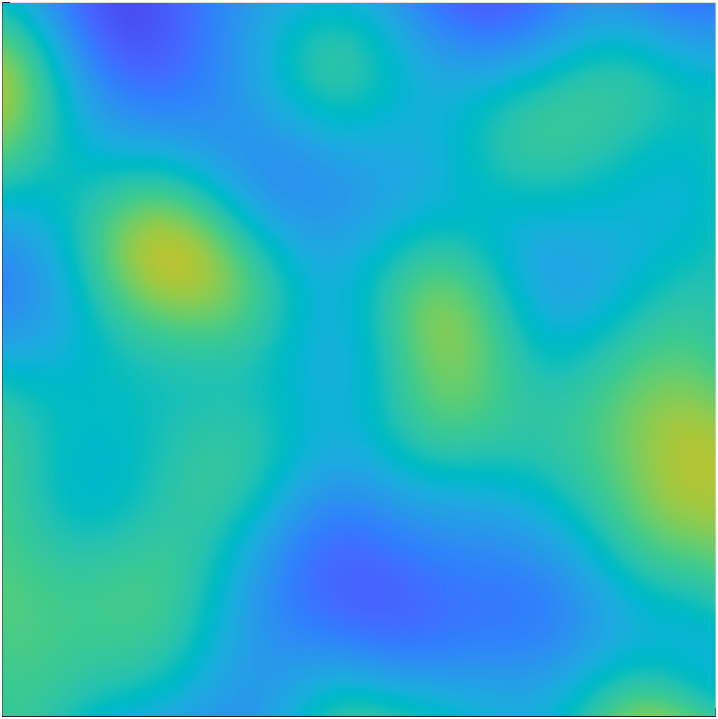}};
\node[inner sep=0pt] (a) at (8,0)
{\includegraphics[height=.2\textwidth]{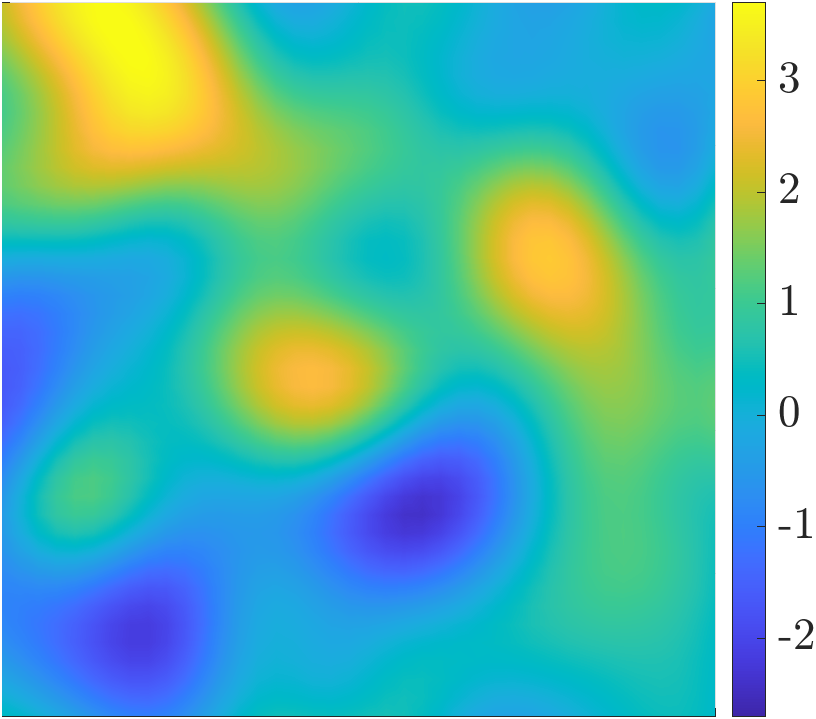}};
\node[inner sep=0pt] (a) at (12,-2)
{\includegraphics[height=.2\textwidth]{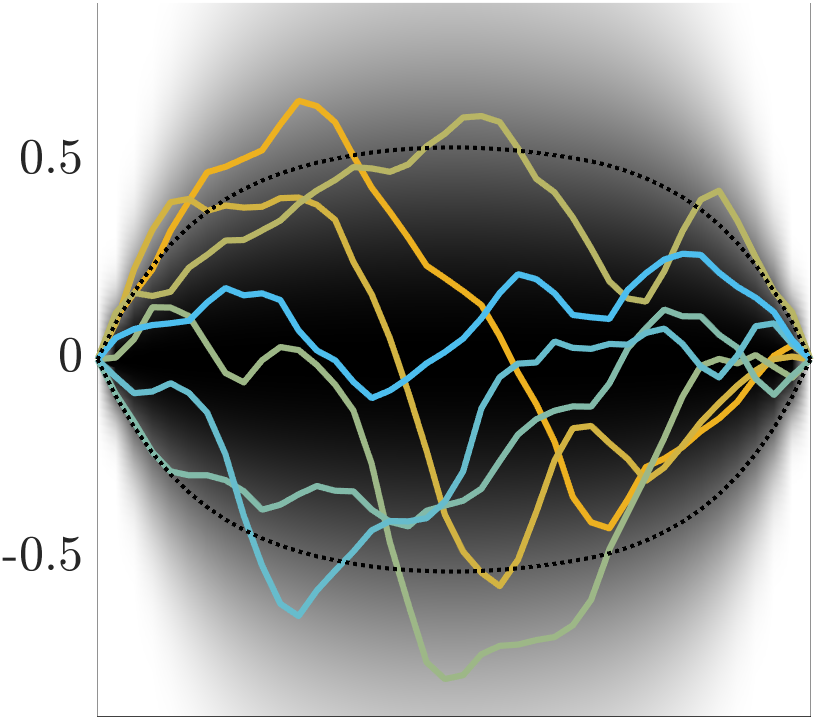}};
\node[inner sep=0pt] (a) at (0,-4)
{\includegraphics[height=.2\textwidth]{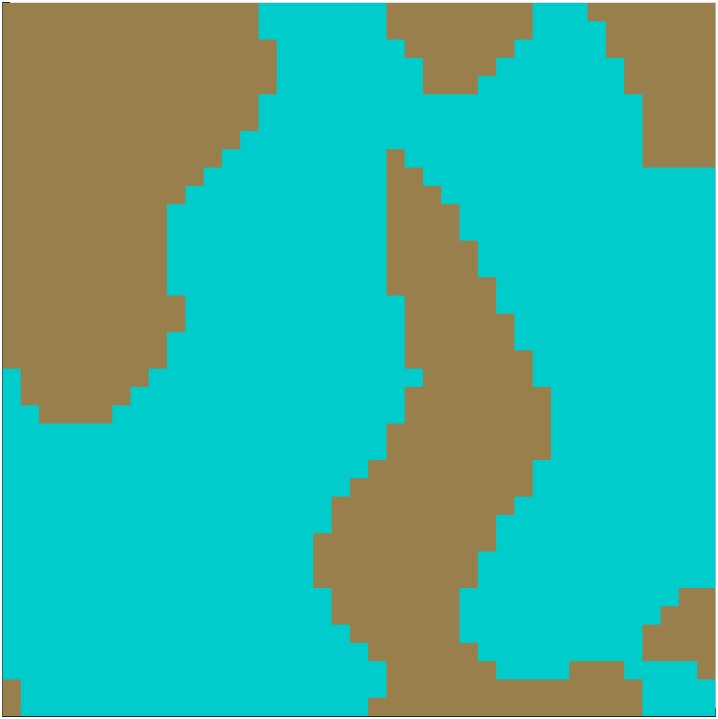}};
\node[inner sep=0pt] (a) at (4,-4)
{\includegraphics[height=.2\textwidth]{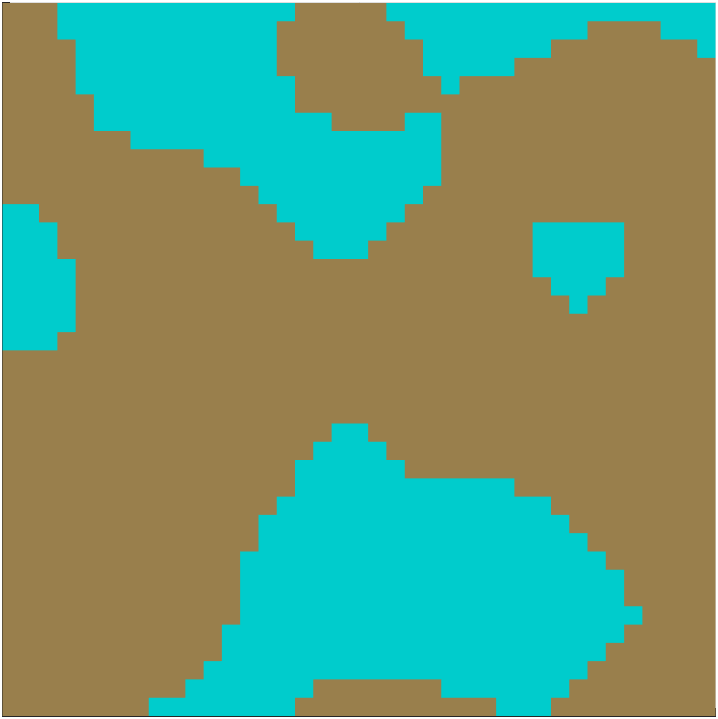}};
\node[inner sep=0pt] (a) at (8,-4)
{\includegraphics[height=.2\textwidth]{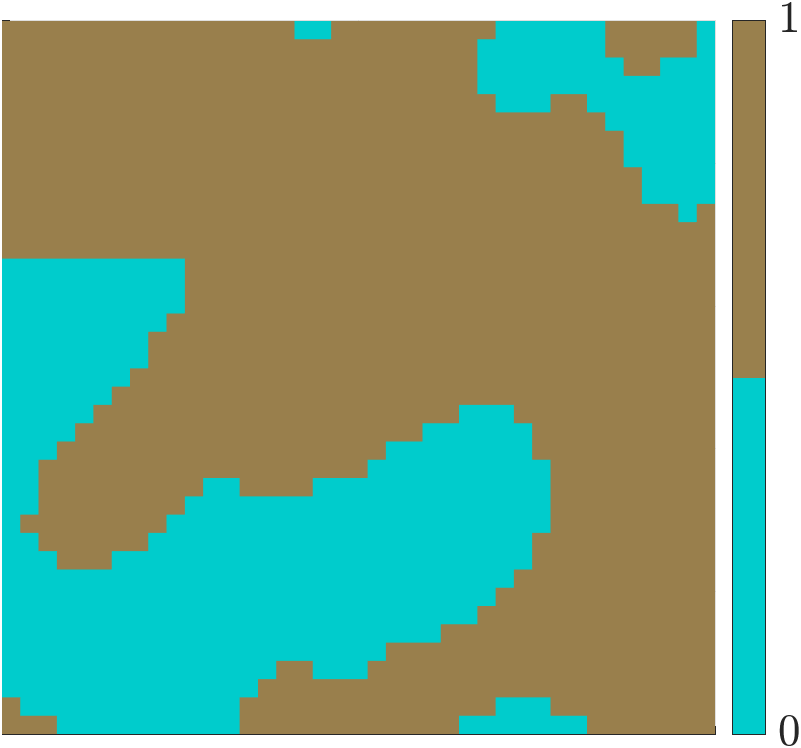}};
\end{tikzpicture}
\caption{Samples from the prior. Top row are samples of $\psi$, bottom row are corresponding samples of $\aux$. Samples of $\m$ are visualized in the right-most column where the grey-scale represents the prior density and the black dotted lines the plus/minus (marginal) one standard deviation.}
\label{fig:darcy2}
\end{figure}

\paragraph{Optimal experimental design} We consider several cases of OED for this numerical example, each carried out using the proposed linearize-then-optimize procedure. In particular, we consider joint OED (of both $\m$ and $\aux$), and then marginal OED for $\m$. For the marginal OED case we also compare the results found when neglecting the uncertainty in $\aux$. Thus, for the idealized subsurface flow example, we consider the following three OED problems:
\begin{enumerate}
    \item Joint OED for $\m$ and $\aux$,
    \item Marginal OED for $\m$ while accounting for the uncertainty in $\aux$, and
    \item Marginal OED for $\m$
    while ignoring the uncertainty in $\aux$.
\end{enumerate}

\paragraph{Linearization} In all cases we use the zero operator $\mathsf{O}\in\mathcal{L}(\mathcal{D}(\mathcal{A})\times L^2(\Omega),\mathbb{R}^{\nData})$ defined by $\mathsf{O}v=0$ for all $v\in \mathcal{D}(\mathcal{A})\times L^2(\Omega)$ as the approximate linear forward model. The zero model is particularly straight-forward to implement, completely non-invasive (\ie, well-suited in the case of {\em black-box} models), computationally cheap, and avoids the differentiability issues associated with the level set parameterization. More specifically, we generate $q=10,000$ samples from the joint prior, \ie, $(\m^{(\ell)},\psi^{(\ell)})\sim\mu_{\m,\psi}$ for $\ell=1,2,\dots,10,000$, and compute the corresponding (noiseless) model predictions, $\data^{(\ell)}$. Subsequently, all required quantities are computed using Equations (\ref{eq epsStats1})-(\ref{eq epsStats3}).

\subsection{Reference case}
Before considering the OED problem we solve the Bayesian inverse problem based on measurements collected at all candidate sensor locations. Specifically, we compute the approximate joint posterior for $\m$ and $\aux$ based on the linear(-ized) BAE approach outlined in Section \ref{sec2 3} and the ``true'' joint posterior with the full nonlinear PTO map using Markov chain Monte Carlo (MCMC) with the preconditioned Crank–Nicolson (pCN) algorithm~\cite{chen2018dimension,cotter2013mcmc}. This reference task serves to provide a {\em best case} scenario in terms of uncertainty reduction, while also giving some insights into the accuracy of the approximate conditional mean $(\m_{0\vert\datasub},\aux_{0\vert\datasub})$ and covariance $\mathcal{C}_{\m,\aux\vert\datasub}$ (see  (\ref{eq: BAEcm}) and (\ref{eq: BAEvar}) respectively) relative to those computed using MCMC. For the MCMC approach, we use a single chain of four million samples and discard the first four hundred thousand as {\em burn-in}. The results of the reference case, including those found using the linear BAE-based approach, are shown in Figure~\ref{fig:darcy5} for $\m$ and in Figure~\ref{fig:darcy3} for $\aux$ (and $\psi$). 

We generally observe that the (approximate) posterior computed using the linear BAE approach has higher levels of uncertainty (in $\m$, $\aux$ and $\psi$) compared to respective posteriors computing using MCMC. This is expected however, as a fundamental feature of the BAE framework is the incorporation of  model errors/uncertainties (induced here by the use of the linear(-ized) surrogate model), which generally increases posterior uncertainty in the parameters. To investigate the posterior uncertainty of $\aux$ we compute the (sample-based) posterior covariance based on 10,000 posterior samples of $\psi$ (one batch from the MCMC-based posterior, and one batch for the linear BAE-based posterior) and computing $\aux=\Phi(\psi)$. It is worth noting that due to the nature of the relationship between $\psi$ and $\aux$, namely via $\Phi$, the posterior uncertainty of $\aux$ found using the linear BAE-approach is dependent on the data. This is evident by the increased uncertainty towards the interfaces of the conditional mean of $\aux$ in Figure~\ref{fig:darcy3}.

\begin{figure}[t!]
\centering
\begin{tikzpicture}

\node[inner sep=0pt] (a) at (0,0)
{\includegraphics[height=.2\textwidth]{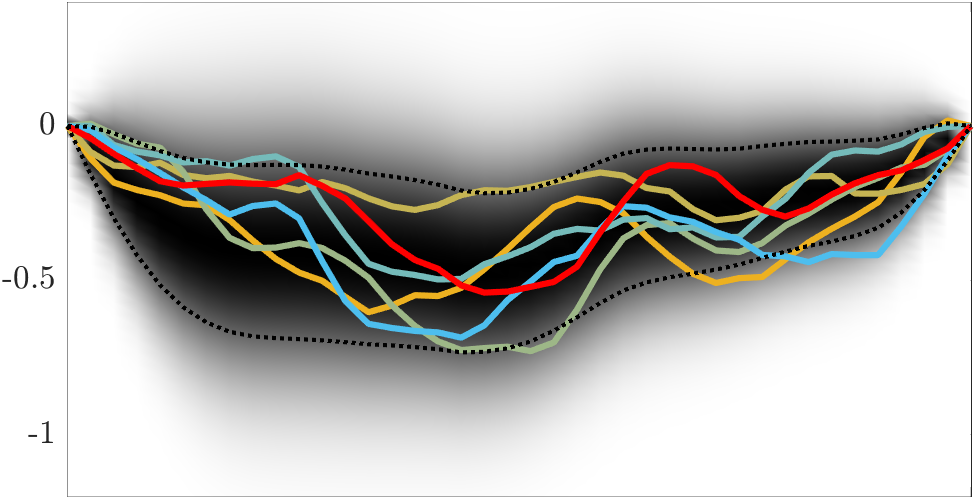}};
\node[inner sep=0pt] (a) at (7,0)
{\includegraphics[height=.2\textwidth]{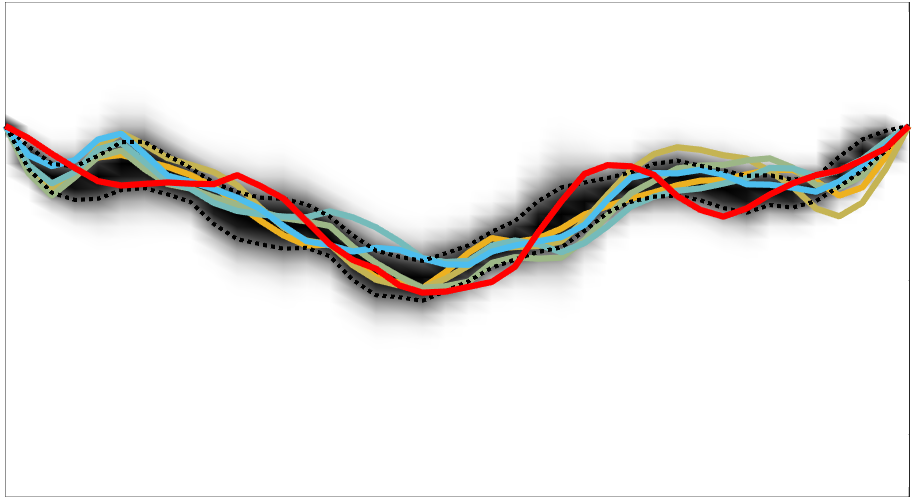}};

\end{tikzpicture}
\caption{Comparison of the marginal distributions of $\m$ for the subsurface flow problem. On the left we show samples of $\m$ from the (approximate) posterior found using the linear(-ized) approach while on the right we show samples of $m$ from the posterior found using pCN MCMC. In both figures the grey-scale represents the respective posterior density and the black dotted lines the respective posterior plus/minus (marginal) one standard deviation.}
\label{fig:darcy5}
\end{figure}

\begin{figure}[h!]
\centering
\begin{tikzpicture}
\node[inner sep=0pt] (a) at (0,0.04)
{\includegraphics[height=.205\textwidth]{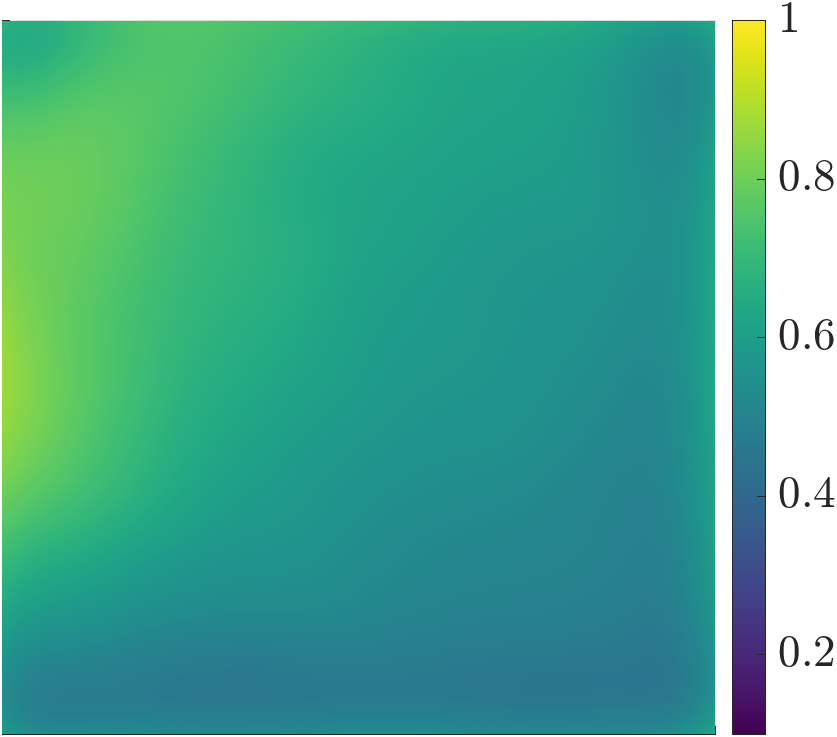}};
\node[inner sep=0pt] (a) at (4,0)
{\includegraphics[height=.2\textwidth]{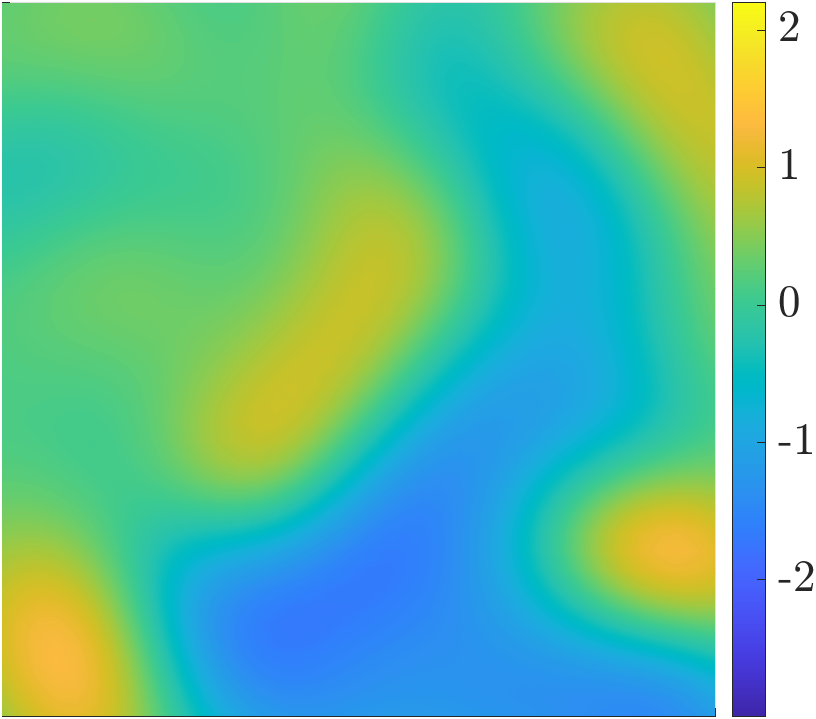}};
\node[inner sep=0pt] (a) at (8,0)
{\includegraphics[height=.2\textwidth]{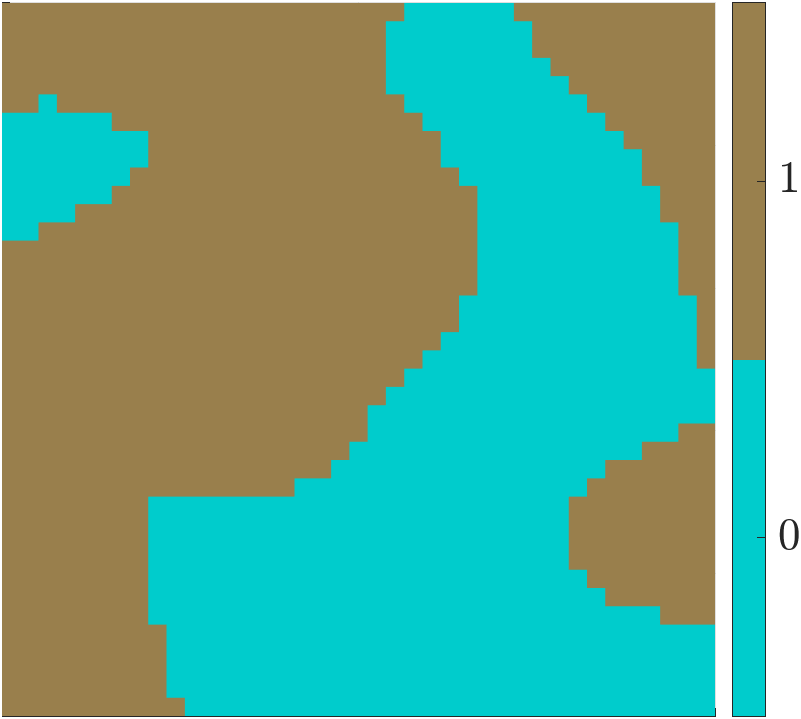}};
\node[inner sep=0pt] (a) at (12,0.04)
{\includegraphics[height=.205\textwidth]{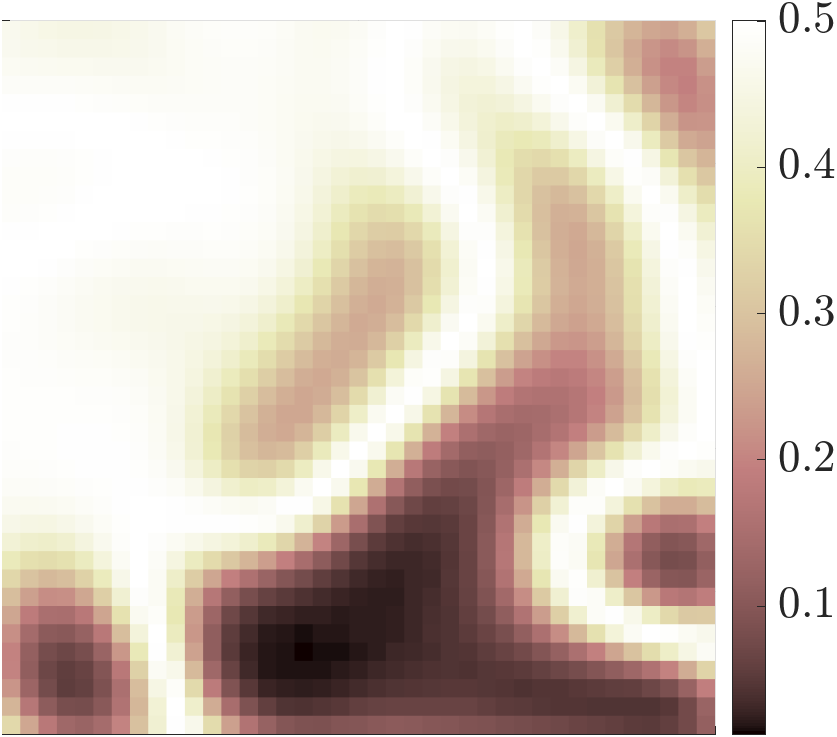}};
\node[inner sep=0pt] (a) at (0,-4)
{\includegraphics[height=.2\textwidth]{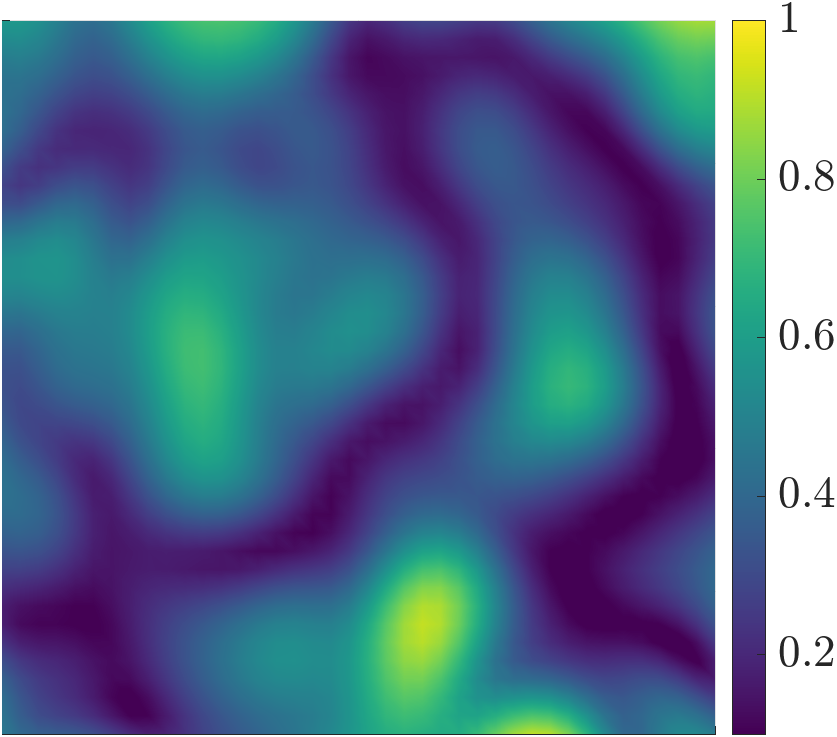}};
\node[inner sep=0pt] (a) at (4,-4)
{\includegraphics[height=.2\textwidth]{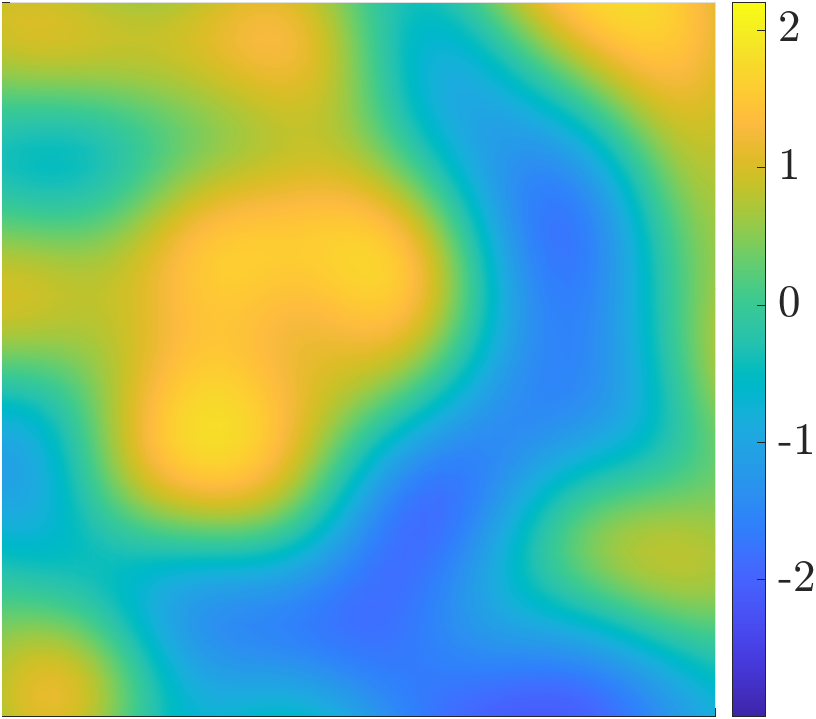}};
\node[inner sep=0pt] (a) at (8,-4)
{\includegraphics[height=.2\textwidth]{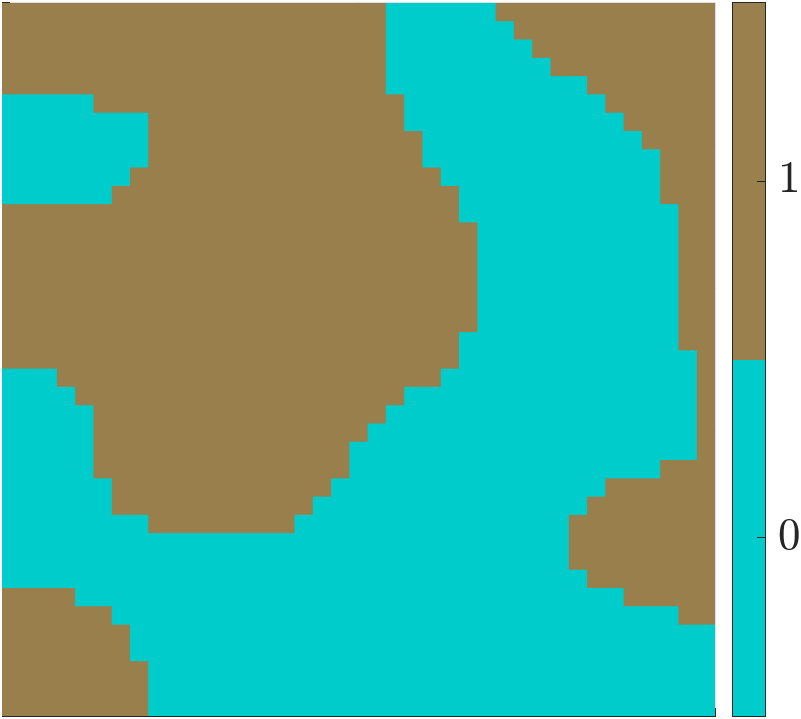}};
\node[inner sep=0pt] (a) at (12,-4.04)
{\includegraphics[height=.205\textwidth]{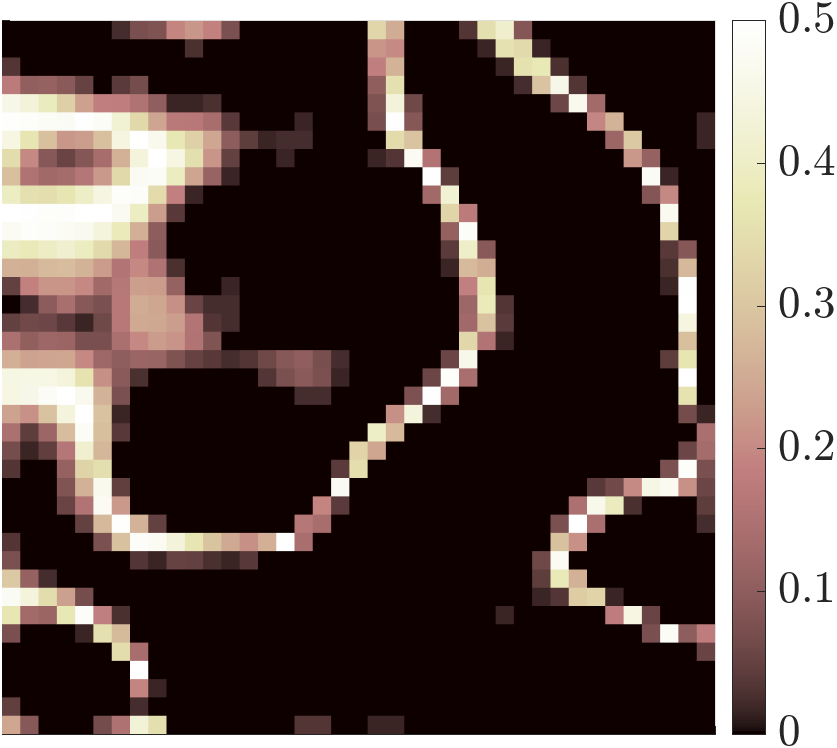}};
\end{tikzpicture}
\caption{Comparison of the marginal distributions of $\psi$ and $\aux$ for the subsurface flow problem. On the top row from left to right are shown the the marginal posterior standard deviation of $\psi$, the conditional mean estimate for $\psi$, the push-forward of conditional mean of $\psi$, \ie, $\phi(w_{\ast\vert\data})$, and the marginal posterior standard deviation of $\aux$ computing using the linear(-ized) approach, while on the bottom we show the respective quantities computed using pCN MCMC.}
\label{fig:darcy3}
\end{figure}
Our main focus in this work is concerned with reductions and measures of posterior uncertainty rather than posterior estimates themselves. However, as expected, the conditional mean estimates found MCMC-based approach do more closely resemble the true parameters compared to those found using the linear BAE-based approach (see Figure~\ref{fig:darcy3} and  Figure~\ref{fig:darcy5}).

\subsection{Optimal experimental design results}

We now consider carrying out the problem of optimal sensor selection using the proposed linearize-then-optimize OED procedure. Specifically, we consider the problem of computing the optimal 20 sensor locations to measure the pressure $u$ for each of the three problems listed above. 

To illustrate the effectiveness of the proposed approach to OED,  we compare the optimal design found to randomly chosen designs. Specifically, for a given number of sensors ${\nChosen}\in\{1,2,\dots,20\}$ we sample (without replacement) 100 random sensor configurations. The design criterion (trace of the relevant joint or marginal (approximate) posterior covariance operator) is then evaluated for each of the random sensor configurations. It should be noted that the optimal designs computed using the linear surrogate may not be optimal for the true posterior, even if uncertainty is accounted for. To this end, for the joint OED problem (Problem 1) we also investigate the effectiveness of our computed designs in minimizing $\mathbb{E}_{\data \vert \weight}\left[\trace(\mathcal{C}_{\both\vert\datasub,w}(\weight,\data))\right]$, where $\mathcal{C}_{\both\vert\datasub,w}$ is the ``true'' posterior covariance operator obtained using the model~\eqref{eq: main1} with the accurate PTO map $\afwd$.

The results for Problem 1 are shown in Figure~\ref{fig:DarcyLocs1}. The optimal design found using the proposed linearize-then-optimize approach as well a comparison of this optimal design to randomly selected designs is presented.
We see that the design found using our proposed approach outperforms  the random designs. This becomes more pronounced as the number of sensors increases up to 20\footnote{If the number of sensors were to grow toward the total possible number of sensors the difference would diminish.}. This appears to demonstrate the usefulness of the proposed method for OED. As 
alluded to above, to further investigate the applicability of our approach, for Problem 1 we also compare the performance of the designs found using the linearized approach to random designs using a SAA to $\mathbb{E}_{\data \vert \weight}\left[\trace(\mathcal{C}_{\both\vert\datasub,w}(\weight,\data))\right]$. Specifically, for comparison, 10 different samples of $(\m,\aux)$ from the joint prior are used to generate 10 sets of data at all measurement locations. For each sample we then evaluate the trace of the accurate posterior covariance operator $\mathcal{C}_{\both\vert\datasub,w}(\weight,\data)$ with a subset of data collected at: the optimal design computed using the linearize-then-optimize approach, as well as for 10 (randomly selected) different designs, all consisting of 20 sensors. Each of the (110 in total) accurate posterior covariances are computed using the pCN MCMC algorithm using a single chain of four hundred thousand samples with the first one hundred thousand discarded as burn in. Note in this case the optimal design significantly outperforms the randomly chosen designs.
\begin{figure}
\centering
\begin{tikzpicture}
\node[inner sep=0pt] (a) at (0,0)
{\includegraphics[height=.3\textwidth]{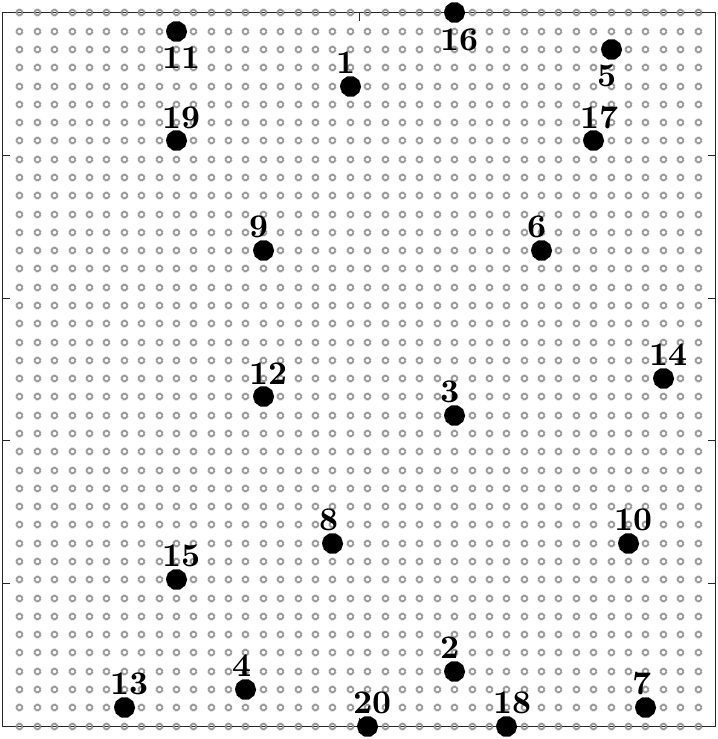}};
\node[inner sep=0pt] (a) at (6.2,0)
{\includegraphics[height=.3\textwidth]{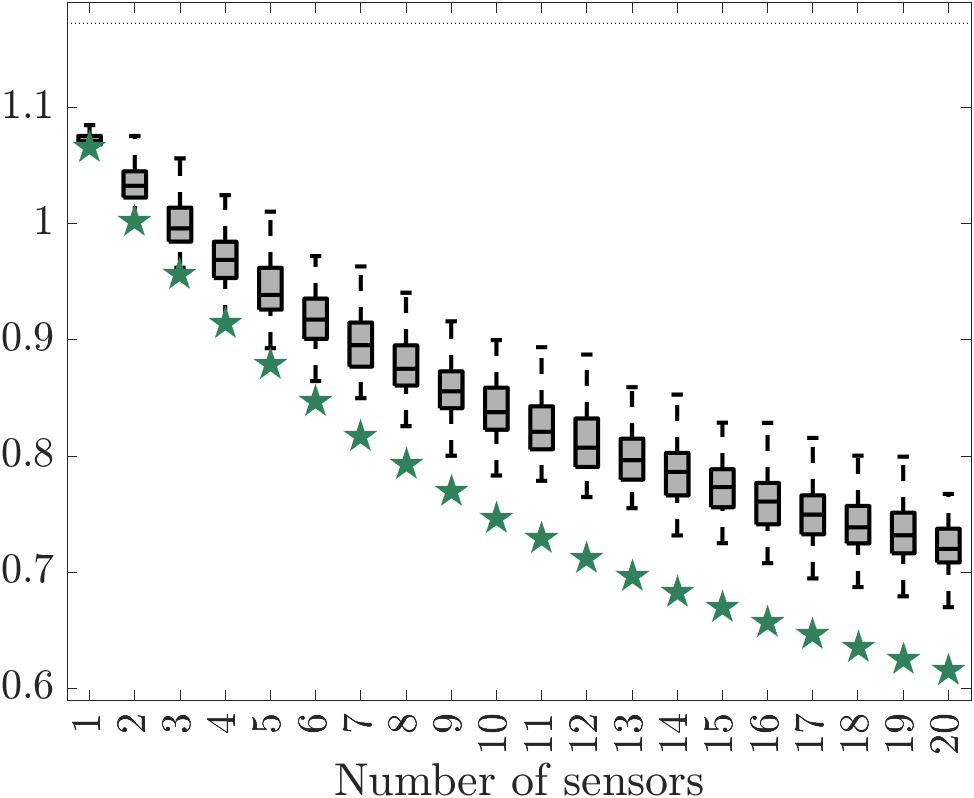}};
\node[inner sep=0pt] (a) at (11.2,0)
{\includegraphics[height=.3\textwidth]{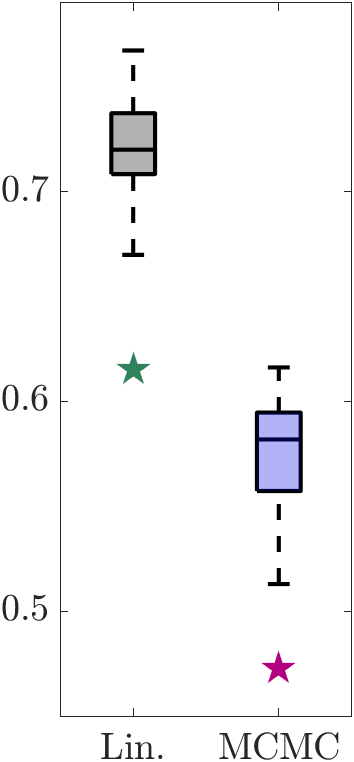}};
\end{tikzpicture}\caption{Optimal experimental design results for the the subsurface flow problem treating $\m$ and $\aux$ as inversion parameters. On the left we show the optimal sensor placements using the linearize-then-optimize approach. In the center we compare the trace of the uncertainty-aware approximate posterior covariance operator found using the optimal sensor (green stars) to  100 randomly chosen designs (black boxplot). On the right we compare the trace of the posterior covariance operator for 20 sensors using the linearization and pCN MCMC for the optimal sensor placements (green and magenta stars, respectively) and for randomly chosen designs (black and blue boxplots, respectively) and the trace of the prior covariance operator (black dotted line).}
\label{fig:DarcyLocs1}
\end{figure}

We next investigate the performance of the proposed approach on the marginal OED problem, \ie, carrying out OED for $\m$ while treating $\aux$ as an auxiliary parameter. As a comparison, we also consider carrying out the uncertainty-unaware formulation of the marginal OED problem, i.e., ignoring the additional uncertainty due to unknown auxiliary parameter.
The results for the marginal OED problem are shown in Figure~\ref{fig:DarcyLocs3}. There is a significant difference in the optimal designs found using the uncertainty-aware approach and the uncertainty-unaware approach. This can most likely be attributed to large uncertainty in the approximation errors resulting from marginalizing over $\aux$ (which has significantly larger uncertainty than $\m$). While the optimal design found using the proposed uncertainty-aware approach are reasonably distributed throughout the domain, the optimal design found using the uncertainty-unaware approach are localized near the top boundary. This is to be expected: the boundary flux $\m$ is defined over the top boundary $\Gamma_{\rm T}$ only, thus when ignoring the additional uncertainty induced by $\aux$ it is natural to measure near or on  $\Gamma_{\rm T}$ to reduce the uncertainty in $\m$.
When comparing to random designs, we see that both the uncertainty-aware and uncertainty-unaware designs perform well (using the uncertainty-aware formulation of the posterior), however the uncertainty-aware design outperforms the corresponding uncertainty-unaware design for all cases considered.

\begin{figure}[ht]
\centering
\begin{tikzpicture}
\node[inner sep=0pt] (a) at (0,0)
{\includegraphics[height=.3\textwidth]{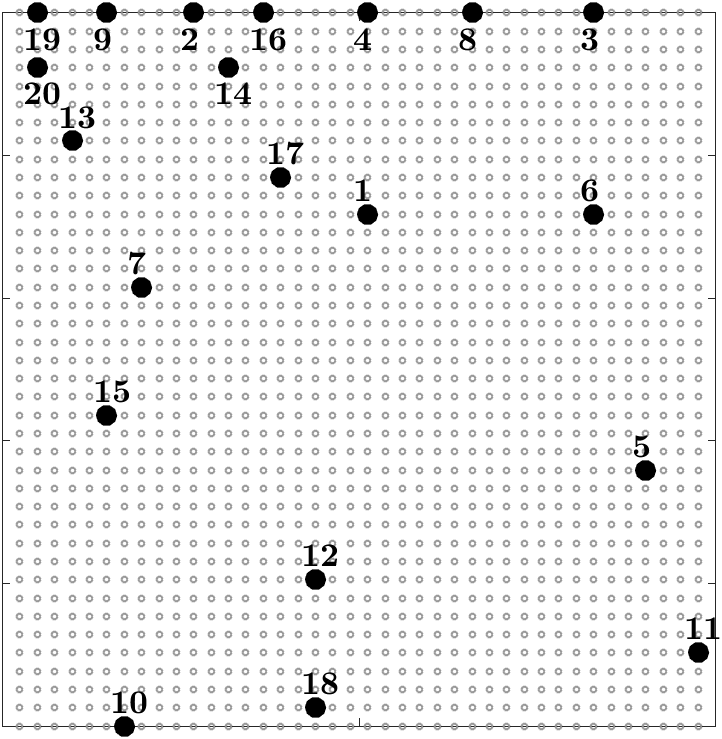}};
\node[inner sep=0pt] (a) at (4.9,0)
{\includegraphics[height=.3\textwidth]{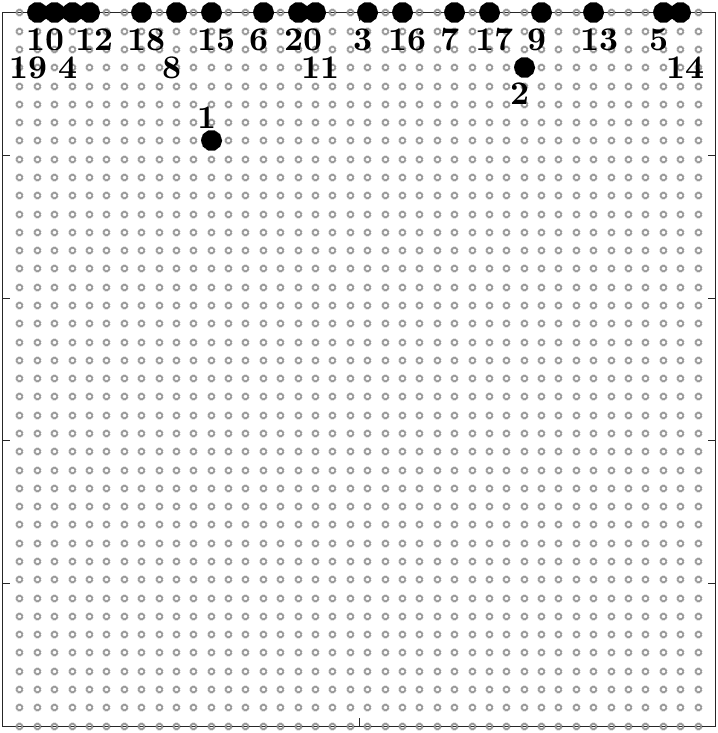}};
\node[inner sep=0pt] (a) at (10.4,0)
{\includegraphics[height=.3\textwidth]{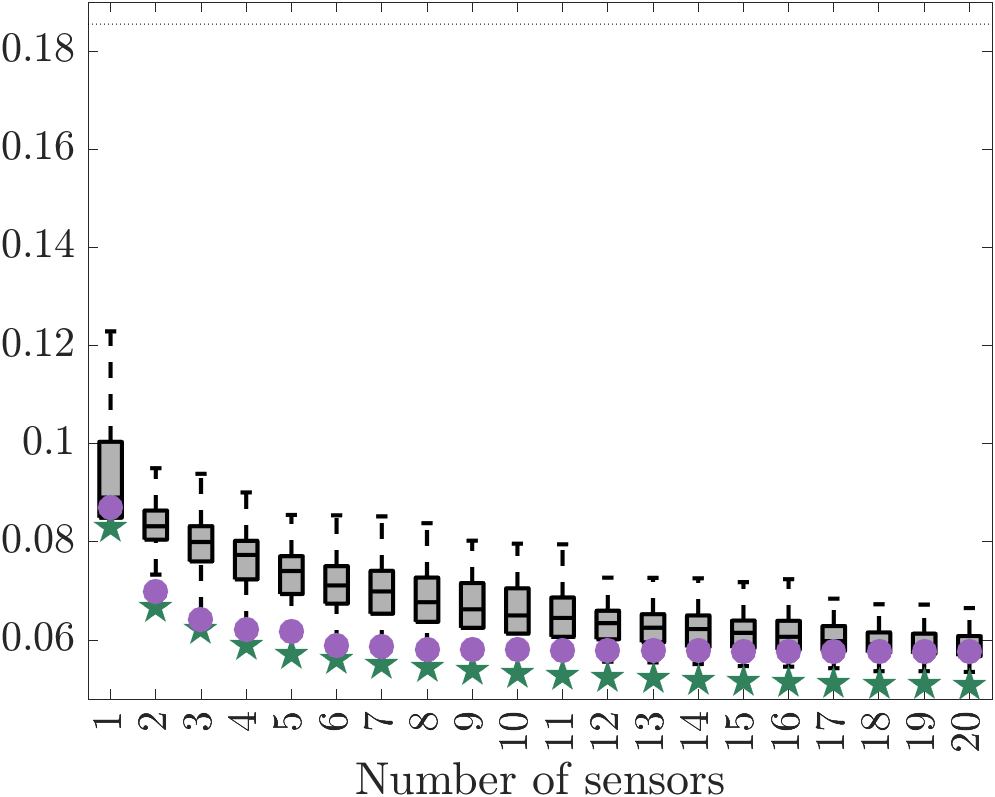}};
\end{tikzpicture}\caption{Marginal optimal experimental design results for the subsurface flow problem treating only the boundary flux $\m$ as the inversion parameter. On the left is the optimal sensor locations found using the uncertainty-aware approach, while in the center is the optimal sensor locations found using the uncertainty-unaware approach. On the right we compare the trace of the uncertainty-aware posterior covariance operator found using the optimal sensor locations found using the uncertainty-aware(green stars) and uncertainty-unaware (purple circles) to 100 randomly chosen designs (black boxplot) and the trace of the (marginal) prior covariance operator (black dotted line).
}
\label{fig:DarcyLocs3}
\end{figure}


\section{Numerical example 2: tsunami detection problem}\label{sec5}
In our second example, we aim to find optimal designs for tsunami source reconstruction in the deep ocean. 
Propagation of earthquake-induced tsunami waves in a two-dimensional spatial domain $\Omega \subset \mathbb{R}^2$ is commonly modeled using the shallow water equations (SWE)~\cite{leveque:tsunami,tong:extreme}. The SWEs are a nonlinear hyperbolic system of depth-averaged conservation laws used to model gravity waves and are well-suited for simulating tsunami waves due to their characteristically long wavelengths relative to water depth. For tsunamis arising from an instantaneous change to the ocean floor, \ie, a \emph{bathymetry} change, the shallow water equations describing the changing water depth $h(x,y,t)$ (defined as the height of the water above the ocean floor) and fluid flows in the $x$ and $y$ directions ($u(x,y,t)$ and $w(x,y,t)$ respectively) at any point $(x,y,t) \in \Omega \times (0,T]$, are
\begin{align*}
h_t + u_x + w_y &= 0, \\
u_t + \left(\frac{u^2}{h} + \frac{1}{2}gh^2\right)_x + \left(\frac{uw}{h} \right)_y &= -g h B_x, \\
v_t + \left(\frac{uw}{h}\right)_x + \left(\frac{w^2}{h}+\frac{1}{2}gh^2 \right)_y &= -g h B_y,
\end{align*}
where $0 > B(x,y) \in \mathcal{B}$ is the post-earthquake bathymetry and $g$ is the gravitational acceleration. It is assumed that the ocean is initially at rest, \ie, $h(x,y,0) = -B_R(x,y)$ and $h_t(x,y,0) = 0$ where $B_R(x,y)$ is the pre-earthquake bathymetry. 

\subsection{Modeling the bathymetry change using the Okada model}\label{subsec:Okada}

Our target example are tsunamis caused by suboceanic earthquakes. The Okada model~\cite{okada:surface} is commonly utilized to model the relationship between slips at fault plates beneath the ocean floor and seafloor deformations or bathymetry changes. Given a discretization of the fault region into a finite number of patches, the Okada model assumes the Earth behaves like a linear elastic material and provides a closed-form expression for evaluating the instantaneous bathymetry change induced by slips at these fault patches in a prescribed \emph{rake} or direction.  

Given a vector of slip magnitudes $\M{m} = [m_1,\ldots,\m_{\Npatches}] \in M$ and rakes $\boldsymbol{\aux} = [\aux_1,\ldots,\aux_{\Npatches}] \in X$ at $\Npatches \in \mathbb{N}$ fault patches, the post-earthquake bathymetry $B(x,y)$ can be defined as
\begin{equation}
B(x,y) = B_R(x,y)+\left(\mathcal{O}\M{h}(\M{\m},\boldsymbol{\aux})\right)(x,y),
\end{equation}
where the linear operator $\mathcal{O}: \mathbb{R}^{2\Npatches} \rightarrow \bothSpace$ is defined by
\begin{equation}
\left(\mathcal{O}\M{h}(\M{m},\boldsymbol{\aux})\right)(x,y) = \sum_{i=1}^{\Npatches} \mathcal{O}_i \begin{bmatrix}
    m_i \sin \aux_i \\
    m_i \cos \aux_i
\end{bmatrix},
\end{equation}
with the functions $\mathcal{O}_i = [\mathcal{O}_i^s,\mathcal{O}_i^c]$ defining the seafloor deformation induced by a slip at patch $i$.  

The Okada model makes various simplifications about the physical properties of the Earth as well as the mechanisms of the deformation (\eg, it assumes the Earth is a homogeneous isotropic elastic material and that the rupture occurs instantaneously) and thus only provides an approximation to the true bathymetry change induced by a slip at a fault. However, it is assumed to provide adequate approximations for the purposes of tsunami modeling and is often used in literature. Additionally, the scarcity of the observed data (due to financial constraints limiting the quantity of deployed deep-ocean pressure sensors) makes detailed reconstruction of the infinite-dimensional ocean deformations difficult, particularly without the use of a physically relevant prior. Parametrization of the seafloor deformation through the Okada model facilitates the use of a prior on the slip patches that results in physically realistic seafloor deformations, as will be discussed in~\Cref{subsec:LinearSWE}.

\subsection{Bayesian inversion using a linearization of the SWEs}\label{subsec:LinearSWE}

The typical goal for tsunami hazard assessment and tsunami warning systems is to estimate the tsunami-causing seafloor rupture and use the estimate for predictions and threat assessment. Time is crucial for these predictions, and using the nonlinear shallow water equations can be costly. However, away from shore in the deep ocean, the linearized SWEs (centered around the ocean at rest) provide a reasonably accurate approximation to the dynamics of propagating tsunami waves~\cite{leveque:tsunami}. This motivates the use of an affine surrogate to the PTO map obtained through a first-order Taylor expansion centered around the bathymetry of the ocean at rest, \ie, 
\begin{equation}
\data \approx \afwd(\mdisc_R,\auxdisc_R)+\afwd_{\mdisc}'\vert_{\mdisc_R,\auxdisc_R}(\mdisc-\mdisc_R)+\afwd_{\auxdisc}'\vert_{\mdisc_R,\auxdisc_R}(\auxdisc-\auxdisc_R) = \afwd(\bothdisc_R)+\mathsf{D}_{\bothdisc}\afwd(\bothdisc_R)\begin{bmatrix}\hat{\mdisc} \\ \hat{\auxdisc} \end{bmatrix}, 
\end{equation}
with $\mdisc_R = [0,\ldots,0]$, $\auxdisc_R = [\frac{\pi}{2},\ldots,\frac{\pi}{2}]$ and $\afwd_{\mdisc}'\vert_{\mdisc_R,\auxdisc_R}$, $\afwd_{\auxdisc}'\vert_{\mdisc_R,\auxdisc_R}$ denoting the derivative of $\afwd$ (with respect to $\mdisc$ and $\auxdisc$, respectively) evaluated at the fixed parameters $\mdisc_R$ and $\auxdisc_R$. 
The linearized PTO map $\mathsf{D}_{\bothdisc}\afwd(\bothdisc_R)$ mapping the slip magnitude perturbations $\hat{\mdisc} \in M$ and rake vectors $\hat{\auxdisc} \in X$ to $\nData$ incremental ocean-depth observations is obtained through: $(1)$ solution of the hyperbolic system
\begin{equation}\label{eq:2DSWE_linEq_sr}
\begin{bmatrix} 
\hat{h} \\
\hat{u} \\
\hat{w} 
\end{bmatrix}_t +  \begin{bmatrix}
0 & 1 & 0 \\
-gB_R & 0 & 0 \\
0 & 0 & 0 
\end{bmatrix} \begin{bmatrix} 
\hat{h} \\
\hat{u} \\
\hat{w}
\end{bmatrix}_x +  \begin{bmatrix}
0 & 0 & 1 \\
0 & 0 & 0 \\
-gB_R & 0 & 0 
\end{bmatrix} \begin{bmatrix} 
\hat{h} \\
\hat{u} \\
\hat{w}
\end{bmatrix}_y = \begin{bmatrix}
0 \\
gB_R\left(\mathcal{O}^s\hat{\mdisc}\right)_x  \\
gB_R\left(\mathcal{O}^s\hat{\mdisc}\right)_y 
\end{bmatrix}, 
\end{equation}
with zero initial conditions for $\hat{h},\hat{u}$ and $\hat{w}$ and $(2)$ application of an observation operator mapping the incremental state $\hat{h}$ to the spatio-temporal observations of the water depth. 
We note here that the linearization, when centered at the rest bathymetry, is invariant to changes in the rake vector, \ie, $\afwd'_{\auxdisc}\vert_{\bothdisc_R}\hat{\auxdisc} \equiv 0$. This means that simultaneous inversion for both the magnitude and rake is not possible using this linear model alone. 

Our primary focus is on tsunamis caused by large magnitude earthquakes. In particular, we focus on earthquakes with a magnitude class between $8-9$. To ensure our designs perform well for detecting such earthquakes we choose a Gaussian prior on the slip magnitudes ($\mdisc \sim \mathcal{N}(\mprdisc,\Gprdisc)$) following the procedure outlined in~\cite[Sections 2 and 5]{leveque:prior}. Regardless of the earthquake magnitude, the direction of the slip for thrust earthquakes is typically around $90^\circ$, therefore we assume a priori that $\auxdisc_i \sim \mathcal{N}(90,10)$ for each slip patch $i = 1,\ldots,\Npatches$. Some sample slip magnitudes (as well as the corresponding bathymetry changes) obtained from this tailored prior are visualized in~\Cref{fig:pr_slips}. 
\begin{figure}[ht]
\centering
\includegraphics[width=0.75\textwidth]{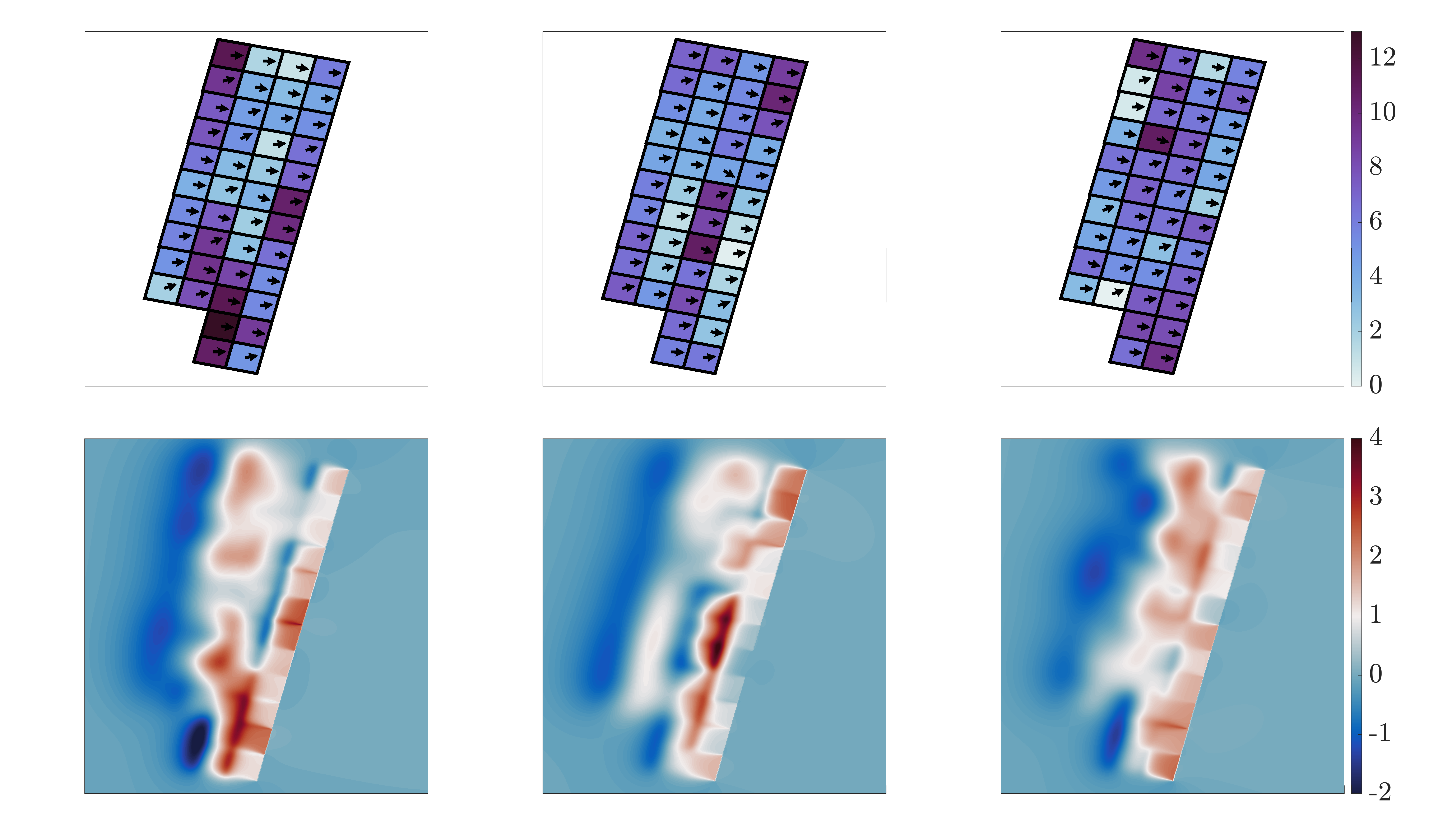}
\caption{Sample magnitudes and rakes from the prior distribution (as described in~\Cref{subsec:LinearSWE}) are visualized in the top row at the slip patches used to parameterize the tsunami-inducing bathymetry change. The bottom row visualizes the corresponding induced bathymetry change due to each sample slip.}
\label{fig:pr_slips}
\end{figure}

\subsection{Computational setup}\label{subsec:comp_setup}

The bathymetry data used for the simulations, visualized in~\Cref{fig:setup}, is from~\cite{MacInnes:japan_bathy} and can be found in the corresponding repository~\cite{LeVeque:japan_bathy}. Since the linear approximation to the shallow water equations degrades in accuracy near shore, we limit our computational domain to a rectangular region offshore as can be seen in the right image in~\Cref{fig:setup}. We assume the tsunami originates from a vertical deformation of the seafloor in this region. To simulate the bathymetry change, a potential fault region is discretized into $44$ slip patches $25$ km in length and width, each with a dip of $14^\circ$ and strike of $193^\circ$ (see~\Cref{fig:setup}). The slip patches chosen for these numerical experiments are a slightly modified subset of of the patches used in~\cite{fuji:patches}. Specifically, to suit our needs, each original patch was split into four equally-sized patches and the depths were adjusted accordingly. 

We model our ocean-floor sensors on a simplified Deep-ocean Assessment and Reporting of Tsunamis (DART) II system, which consists of a pressure sensor tethered to the ocean floor and a seasurface companion buoy equipped with satellite telecommunications capability~\cite{meinig:DARTII}. For simplicity, we assume the sensors can measure the height of the water column above them directly. We specify $214$ locations for possible data collection, assuming the sensors can only be placed in depths between $1-6$ kilometers. A visualization of these possible locations is shown on the right in Figure~\ref{fig:setup}. Since the water amplitude can be resolved with higher accuracy away from the region of maximum deformation (see, \eg,~\cite{meinig:DARTII}) we impose uncorrelated Gaussian measurement noise with zero mean and standard deviation $0.005$ meters for data collected at sensors situated away from the fault and $0.08$ meters for sensors close to the fault.
\begin{figure}[ht]
\center
\includegraphics[width=0.9\linewidth]{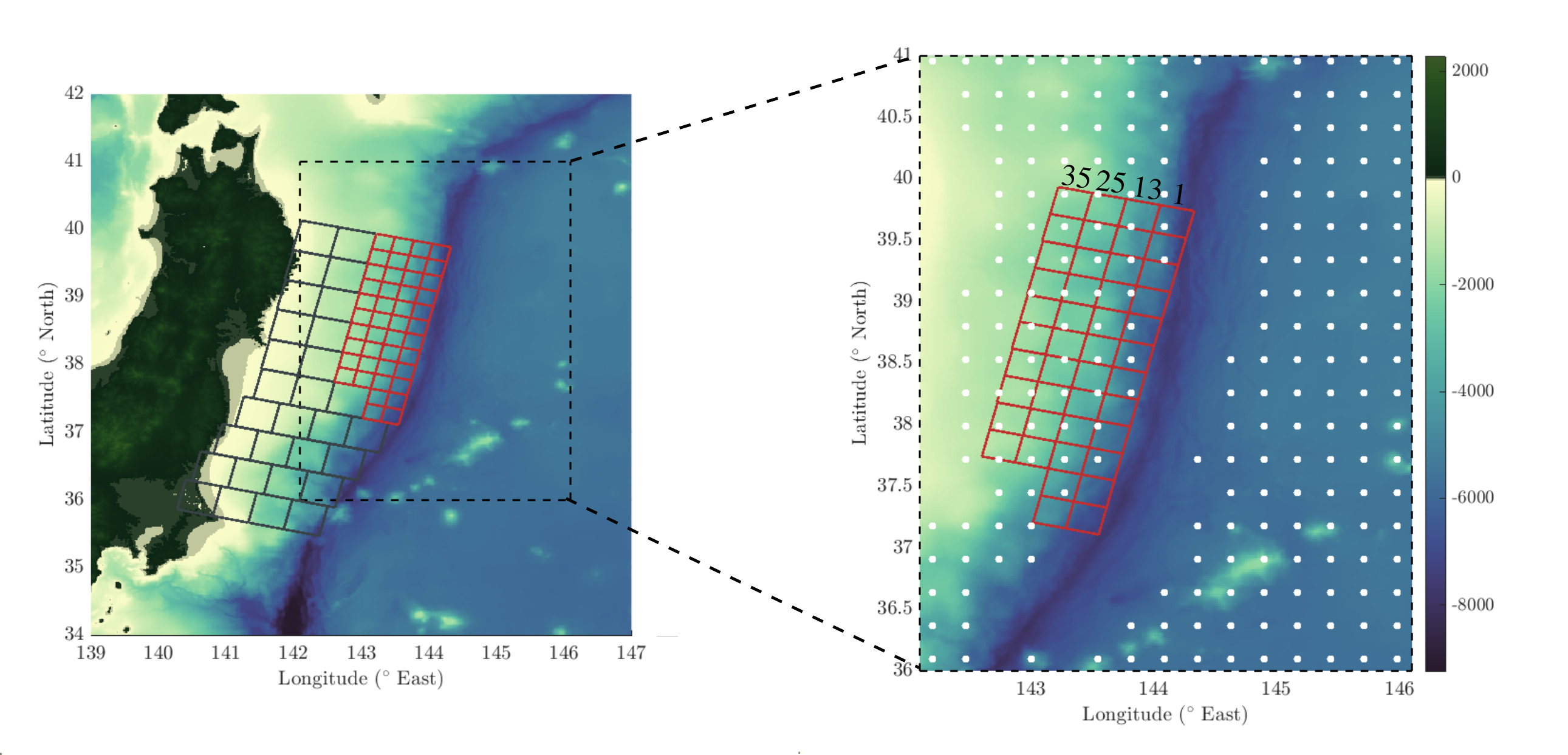}
\caption{The left figure is included for spatial reference, it depicts the topological data of the full domain. The solid grey lines indicate the boundaries of the slip patches obtained from~\cite{fuji:patches} which we have split in quarters. The boundaries of the patches used in our simulations are depicted by solid red lines. The right image shows a zoomed-in view of our computational domain with the numbering used for the patches. The white dots show potential locations for sensor placement.}
\label{fig:setup}
\end{figure}

To simulate ``event mode'' of the DART sensors, we assume each sensor produces measurements at $30$ second intervals for the first four minutes (starting at $5$ seconds after the seafloor rupture) resulting in $9$ depth readings for each sensor. For simplicity, we assume the $30$-second depth interval readings are average readings over a two second interval, \ie, for $\tau_i \in \{5,35,\ldots,245\}$, $h(\tau_i) \approx \frac{1}{2} \int_{\tau_i - 1}^{\tau_i + 1} h(t) \, dt $, which we approximate with the trapezoidal rule. 

\subsection{Results} 
In this section, we compare the optimal placement of sensors using the uncertainty-aware and uncertainty-unaware formulation of the OED problem.
The uncertainty-aware designs are obtained treating both the magnitudes $\mdisc$ and directions $\auxdisc$ at each fault patch as parameters-of-interest and accounting for uncertainty due to the use of the linear surrogate map. All the forward simulations in this section were performed using the Conservation Laws Package (Clawpack~\cite{clawpack,mandli2016clawpack}) and the GeoClaw toolbox~\cite{berger:geoclaw}. 

As in~\cref{sec4}, we again choose the zero operator for obtaining the uncertainty-aware designs. For the uncertainty-unaware designs, the linearized SWEs~\eqref{eq:2DSWE_linEq_sr} are used to approximate the propagating tsunami and model error is ignored. For both uncertainty-aware and uncertainty-unaware designs, we compute the optimal placement of the sensors using the greedy approach outlined in~\Cref{sec3 1} with $500$ samples used to approximate the BAE statistics for the uncertainty-aware model. The uncertainty-aware and uncertainty-unaware optimal placements of $20$ sensors are visualized in~\Cref{fig:tsunami_sensComp}. 
\begin{figure}[ht]
\centering
\includegraphics[width=0.8\textwidth]{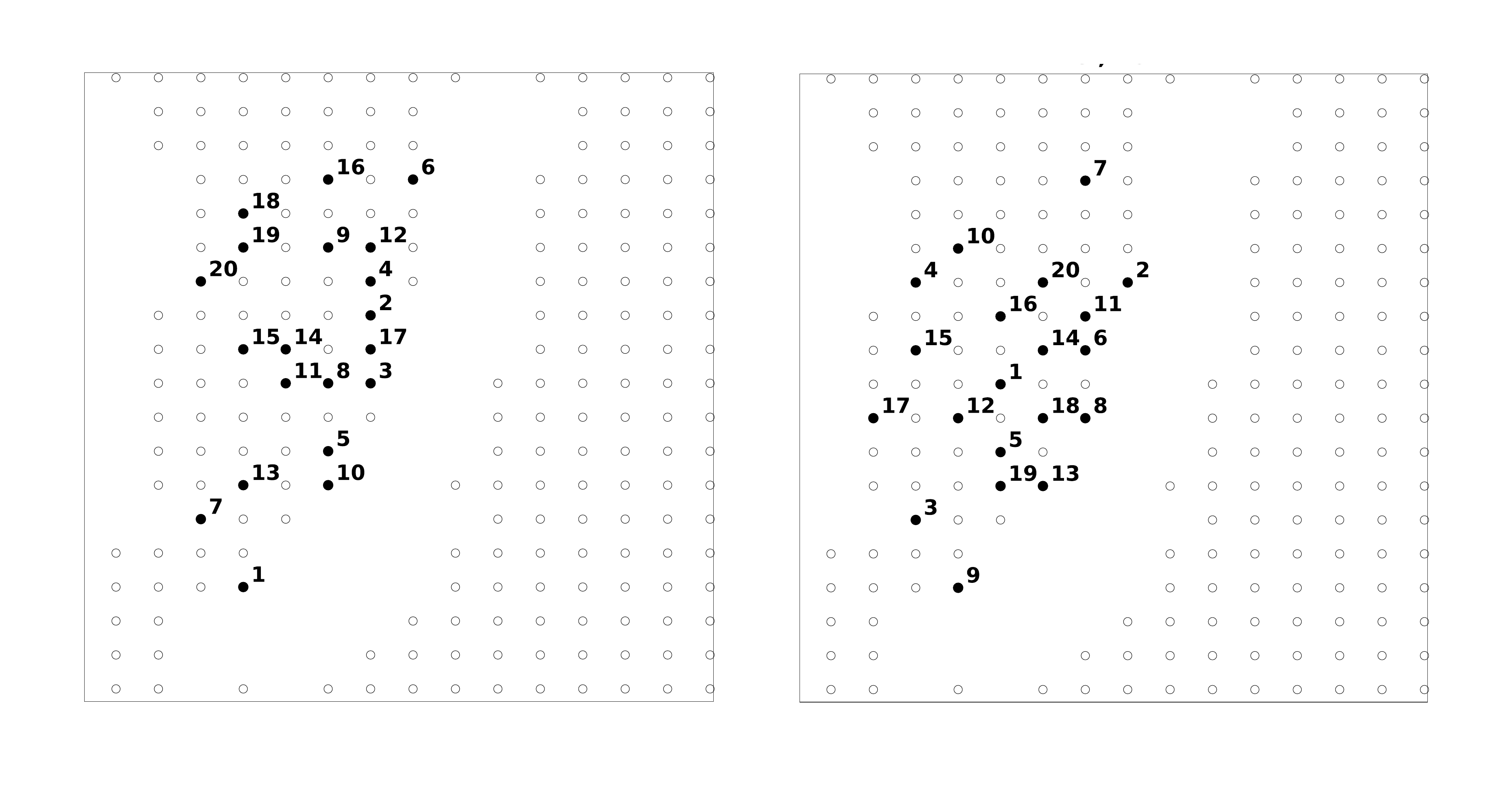}
\caption{Uncertainty-aware sensor placements for the tsunami model problem (left column), and the uncertainty-unaware optimal design (right column). In all the plots, the chosen sensors are numbered according to the order in which they were chosen using the greedy approach.}
\label{fig:tsunami_sensComp}
\end{figure}

To illustrate the effectiveness of uncertainty-aware designs compared to uncertainty-unaware designs, in~\Cref{fig:tsunami_postMean} we compare the posterior mean and standard deviation obtained using both designs with 13 sensors and the uncertainty-aware formulation of the Bayesian inverse problem. In all cases, noisy data was simulated with a randomly chosen ``true'' parameter vector $[\mdisc_{\text{true}},\auxdisc_{\text{true}}]$ and the full nonlinear SWEs. 
The designs optimized using the linearized SWE without accounting for uncertainty lead to lower posterior uncertainty in the slip magnitudes than the uncertainty-aware designs. This is not surprising --- uncertainty in the rakes can not be reduced using~\eqref{eq:2DSWE_linEq_sr}, therefore sensor placements are chosen to optimally infer the slip magnitudes. Thus, the uncertainty is higher in the slip rakes, and the overall reconstruction of the fault (and resulting bathymetry change) is worse when using the uncertainty-unaware designs. This observation is strengthened by the relative distance (measured using the Euclidean norm) from the truth to the posterior means, which is approximately: 0.44 and 0.26 for the relative error in the slip magnitude estimates using the uncertainty-unaware and uncertainty-aware designs, respectively, and 0.17 and 0.13 for the relative error in the slip rake estimates using the uncertainty-unaware and uncertainty-aware designs, respectively. 
Note that without accounting for model error in the Bayesian inverse problem, the optimal uncertainty-unaware design performs rather poorly in reconstructing the true parameter vector (see~\Cref{fig:tsunami_postMean_unaware}). 
\begin{figure}[hb!]
	\centering
	\begin{tikzpicture}
        \node (img) {\includegraphics[width=\textwidth]{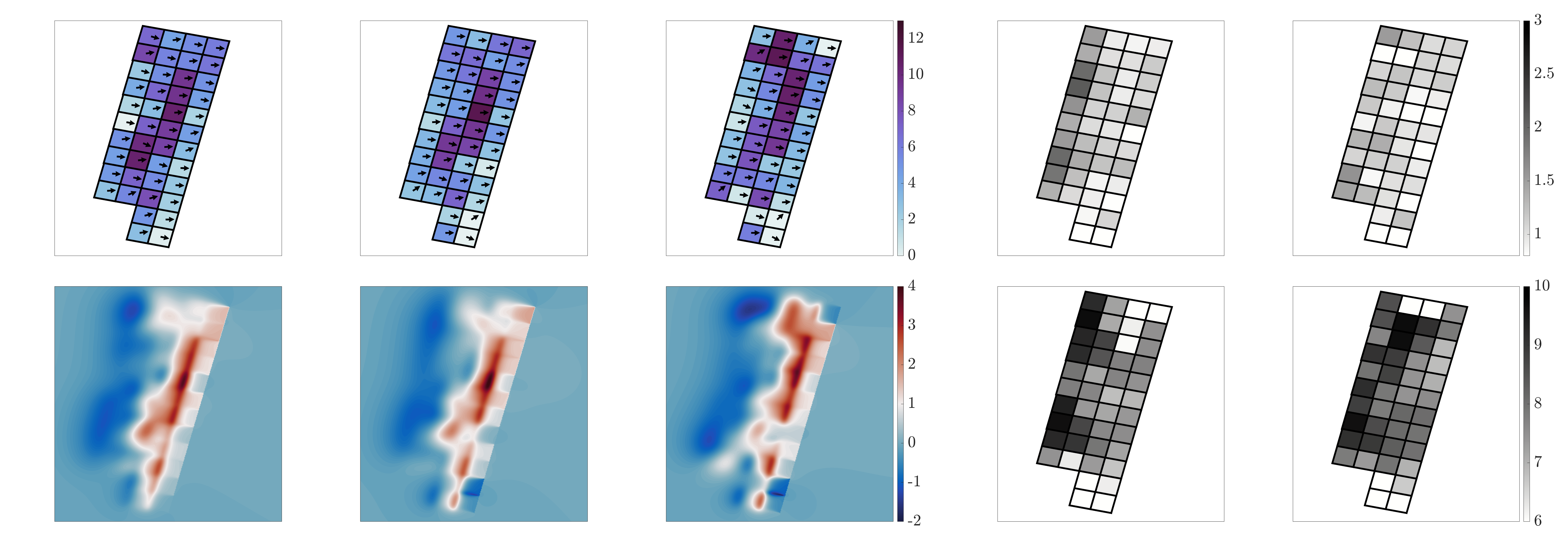}};
        \node at (-6.2,3.0) {(i)};
        \node at (-3.2,3.0) {(ii)};
        \node at (-0.1,3.0) {(iii)};
        \node at (3.2,3.0) {(iv)};
        \node at (6.2,3.0) {(v)};
	\end{tikzpicture}
 \caption{Visualization of posterior statistics using the uncertainty-aware and unaware optimal sensor placements. The slip magnitudes $\mdisc_{\text{true}}$ and rakes $\auxdisc_{\text{true}}$ as well as the corresponding bathymetry change used to simulate data is shown in the top and bottom figures of column (i) respectively. In the top row of columns (ii) and (iii) we show the posterior slip magnitudes and rakes obtained using the uncertainty-aware and uncertainty-unaware design. The corresponding average bathymetry change induced by the posterior distribution with both design choices is shown in the bottom row of columns (ii) and (iii). Columns (iv) and (v) visualize the posterior standard deviations (obtained using the uncertainty-aware and uncertainty-unaware designs respectively) in the slip magnitude (top row) and rakes (bottom row).}
 \label{fig:tsunami_postMean}
\end{figure}
\begin{figure}[hbt!]
	\centering
	\begin{tikzpicture}
        \node (img) {\includegraphics[width=0.45\textwidth]{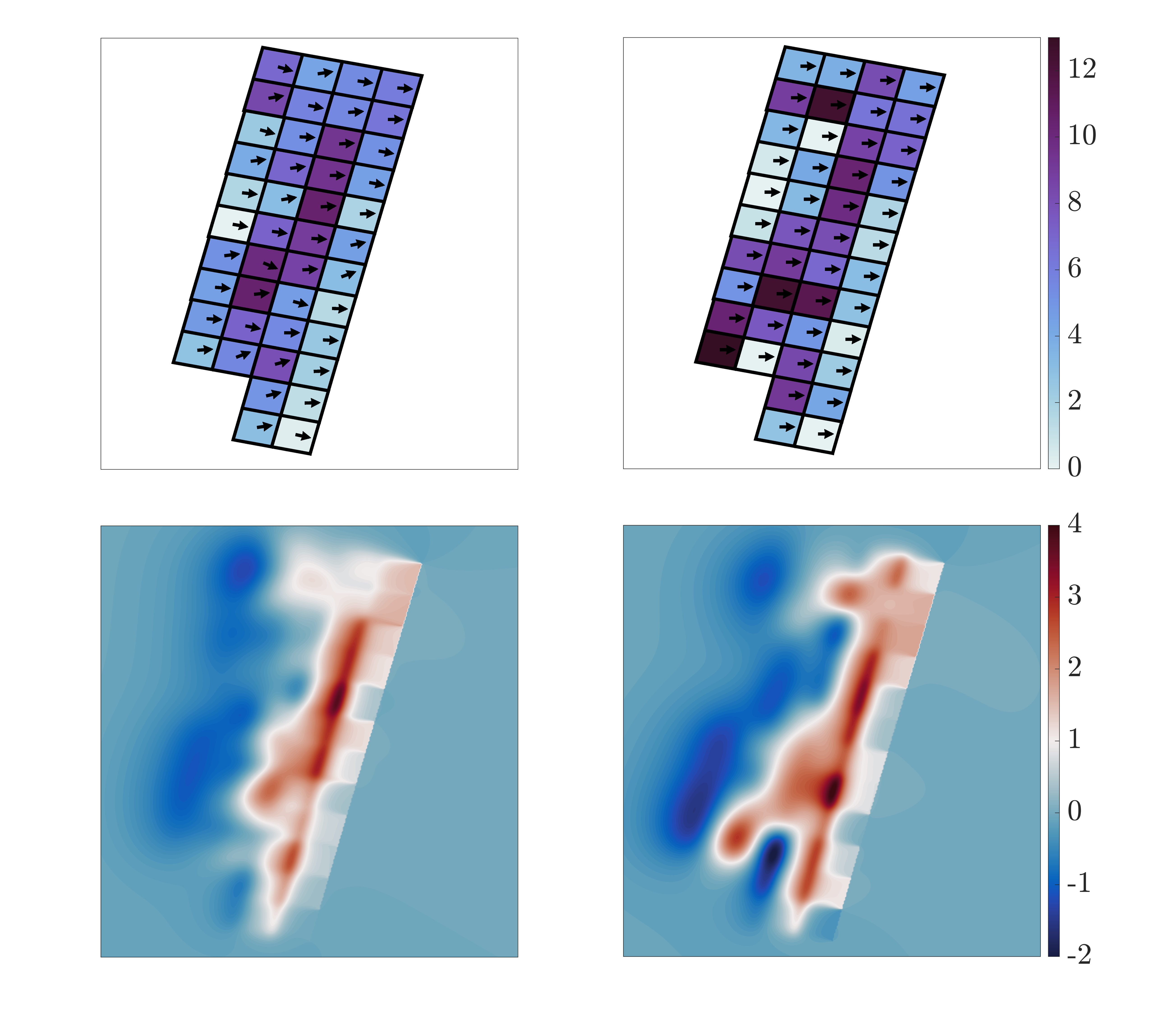}};
        \node at (-1.8,3.4) {(i)};
        \node at (1.6,3.4) {(ii)};
	\end{tikzpicture}
	\caption{Visualization of the posterior mean using the uncertainty-unaware optimal sensor placement. The slip magnitudes $\mdisc_{\text{true}}$ and rakes $\auxdisc_{\text{true}}$ as well as the corresponding bathymetry change used to simulate data is shown in the top and bottom figures of column (i) respectively. In the top row of column (ii) we show the estimated slip magnitudes and rakes for each slip patch. The corresponding average bathymetry change induced by the posterior distribution is shown in the bottom row of column (ii). The posterior was obtained using the linearized SWEs without accounting for model error.}
	\label{fig:tsunami_postMean_unaware}
\end{figure}

We further evaluate the quality of the optimal uncertainty-aware designs compared to the optimal uncertainty-unaware designs and randomly chosen configurations in~\Cref{fig:tsunamis_traceComps}. For each design configuration involving $k$ sensors, we evaluate: the trace of the resulting uncertainty-aware posterior covariance matrix, and the expected relative error of the posterior mean. To obtain the latter, we fix $100$ sample parameters $\bothdisc^{(i)}$ from the prior distribution (different than those used for computing the covariance statistics required for the BAE approach) and synthesize noisy data using the nonlinear SWEs, \ie, $\data^{(i)} = \afwd(\bothdisc^{(i)})+\noise$.
The expected error in the posterior mean is then approximated using a sample average approach, $\frac{1}{100}\sum_{i=1}^{100}\frac{\|\bothdisc^{(i)}-\bothmeandisc^{\data^{(i)}}\|}{\|\bothdisc^{(i)}\|}$, where $\bothmeandisc^{\data^{(i)}}$ denotes the posterior mean corresponding to data $\data^{(i)}$. 
The uncertainty-aware designs outperform the uncertainty-unaware and random designs in reducing the posterior trace as well as the average approximation error in the posterior mean. 
Additionally, while the uncertainty-aware designs in comparisons in~\cref{fig:tsunami_sensComp} and~\cref{fig:tsunami_postMean} were obtained using error statistics computed with $500$ samples, for~\Cref{fig:tsunamis_traceComps} we computed optimal uncertainty-aware designs using $q=50,250,500,1000$ samples. While using more samples does produce more effective designs, we emphasize that reasonable designs could be obtained using as few as $50$ samples, \ie, using only $50$ runs of the costly forward model. 
\begin{figure}[ht!]
\centering
\begin{tikzpicture}
\node[inner sep=0pt] (a) at (-8.0,0)
{\includegraphics[height=.43\textwidth]{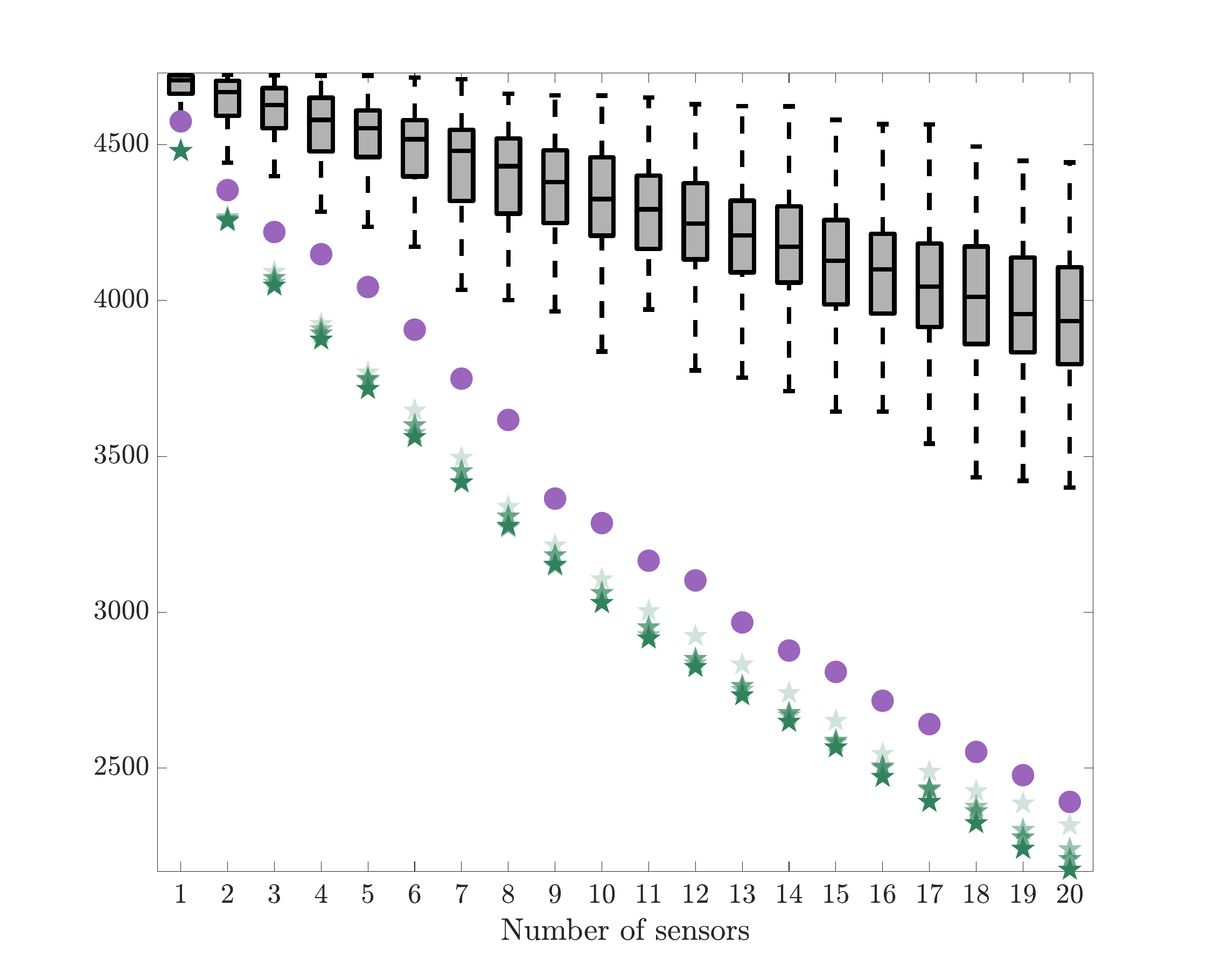}};
\node[inner sep=0pt] (b) at (0,0)
{\includegraphics[height=.435\textwidth]{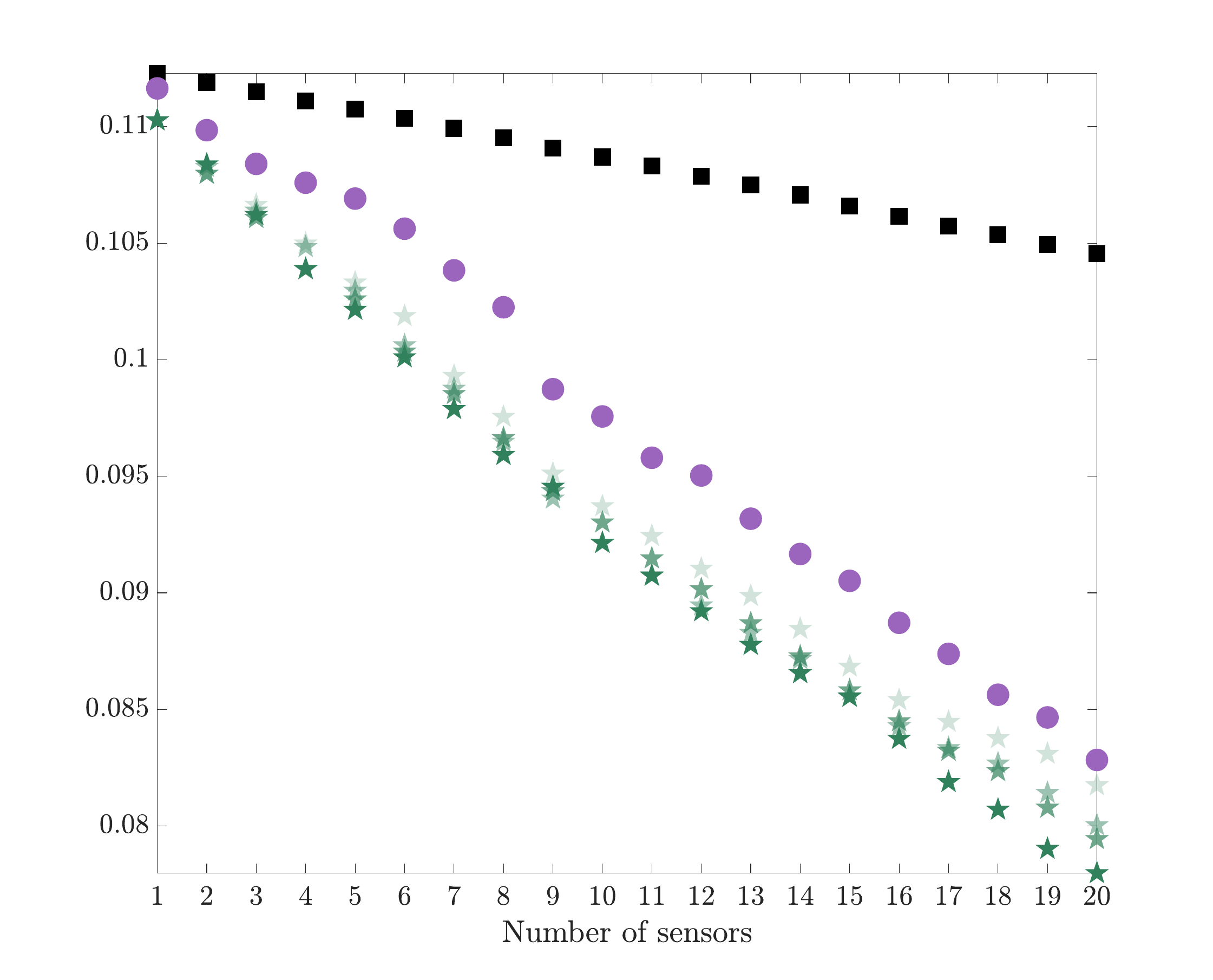}};
\end{tikzpicture}
\caption{In the left we compare the trace of the posterior covariance operator using uncertainty-aware designs (green stars), uncertainty-unaware designs (purple circles) and 100 randomly chosen designs (black boxplot). In all cases, the posterior covariance operator was computed using the zero map while incorporating model uncertainty with $1000$ samples using the BAE approach. In the right figure we compare the average relative error of the posterior mean (computed using $100$ samples $\bothdisc^{(i)} \sim \mupr$) for each design choice. The posterior mean was obtained using noisy data synthesized using $\bothdisc^{(i)}$ and the nonlinear SWE. The black squares correspond to the average over 100 randomly chosen designs. In both figures, the uncertainty aware designs were obtained using $q = 50,250,500,1000$ (visualized using green stars of increasing opacity, respectively) samples to approximate the error statistics.}
\label{fig:tsunamis_traceComps}
\end{figure}

\section{Discussion and Conclusion}\label{sec6}

We have presented a scalable approach for approximating optimal sensor placements for nonlinear Bayesian inverse problems. Our method involves replacing a nonlinear parameter-to-observable map with a linear surrogate while incorporating model error into the inverse problem with the Bayesian approximation error approach. Notably, we demonstrated that this formulation yields an approximation to the A-optimality criterion that is asymptotically independent of the specific linearization choice. This result enables a derivative-free approach to linearized OED. Through two numerical examples, we illustrated the efficacy of sensor placements obtained with our method using the zero map as a surrogate to the accurate forward dynamics.

The results in this article point to several possibilities for future work. Firstly, we reiterate that equivalence of the A-optimal designs under different linearizations can only be guaranteed asymptotically. While the design comparisons in Sections~\Cref{sec4} and~\Cref{sec5} indicate that the number of samples used in our examples were sufficient in yielding effective designs, simulating tens-of-thousands of data samples using the accurate model may be prohibitively expensive for certain problems. To accommodate such cases, exploring the use of well-chosen control variates to expedite convergence will be crucial to reduce the number of samples needed. 

Additionally, a key aspect of our approach is that it relies solely on sample parameter and data pairs. While the ``accurate'' data used in our examples was synthetic, this suggests the potential of applying our method for model-free OED. That is, in situations where the accurate forward model is unknown, our approach could be utilized using experimental data for fixed parameter choices. Lastly, another intriguing direction is a study of how well our designs perform in minimizing the uncertainty in the ``true'' posterior. We provide a heuristic comparison for the subsurface flow example in~\Cref{sec4}, however a more rigorous study would provide insight into the limitations of linearization-based approximations to optimal designs. 

\section*{Acknowledgments} 
The work of KK has been partially funded by Carl-Zeiss-Stiftung through the project ``Model-Based AI: Physical Models and Deep Learning for Imaging and Cancer Treatment''. The authors thank Alen Alexanderian, Alex de Beer, Oliver Maclaren, and Georg Stadler for helpful discussions and their valuable comments about this manuscript.

\bibliographystyle{siam}
\bibliography{sn-bibliography}

\begin{thebibliography}{10}

\bibitem{alexanderian:oedreview}
{\sc A.~Alexanderian}, {\em Optimal experimental design for
  infinite-dimensional {B}ayesian inverse problems governed by {PDE}s: {A}
  review}, Inverse Problems,  (2021).

\bibitem{alexanderian:baeoed}
{\sc A.~Alexanderian, R.~Nicholson, and N.~Petra}, {\em Optimal design of
  large-scale nonlinear {B}ayesian inverse problems under model uncertainty},
  arXiv preprint arXiv:2211.03952,  (2022).

\bibitem{alexanderian:oed}
{\sc A.~Alexanderian, N.~Petra, G.~Stadler, and O.~Ghattas}, {\em A-optimal
  design of experiments for infinite-dimensional {B}ayesian linear inverse
  problems with regularized $\ell_0$-sparsification}, SIAM Journal on
  Scientific Computing, 36 (2014), pp.~A2122--A2148.

\bibitem{alexanderian:nonlinoed}
{\sc A.~Alexanderian, N.~Petra, G.~Stadler, and O.~Ghattas}, {\em A fast and
  scalable method for {A}-optimal design of experiments for
  infinite-dimensional {B}ayesian nonlinear inverse problems}, SIAM Journal on
  Scientific Computing, 38 (2016), pp.~A243--A272.

\bibitem{alexanderian:marginal}
{\sc A.~Alexanderian, N.~Petra, G.~Stadler, and I.~Sunseri}, {\em Optimal
  design of large-scale {B}ayesian linear inverse problems under reducible
  model uncertainty: good to know what you don't know}, SIAM/ASA Journal on
  Uncertainty Quantification, 9 (2021), pp.~163--184.

\bibitem{arridge:bae}
{\sc S.~R. Arridge, J.~P. Kaipio, V.~Kolehmainen, M.~Schweiger, E.~Somersalo,
  T.~Tarvainen, and M.~Vauhkonen}, {\em Approximation errors and model
  reduction with an application in optical diffusion tomography}, Inverse
  problems, 22 (2006), p.~175.

\bibitem{attia:robustOED}
{\sc A.~Attia, S.~Leyffer, and T.~Munson}, {\em Robust {A}-optimal experimental
  design for {B}ayesian inverse problems}, arXiv preprint arXiv:2305.03855,
  (2023).

\bibitem{attia:stochasticOED}
{\sc A.~Attia, S.~Leyffer, and T.~S. Munson}, {\em Stochastic learning approach
  for binary optimization: Application to {B}ayesian optimal design of
  experiments}, SIAM Journal on Scientific Computing, 44 (2022),
  pp.~B395--B427.

\bibitem{berger:geoclaw}
{\sc M.~J. Berger, D.~L. George, R.~J. LeVeque, and K.~T. Mandli}, {\em The
  {G}eo{C}law software for depth-averaged flows with adaptive refinement},
  Advances in Water Resources, 34 (2011), pp.~1195--1206.

\bibitem{carrera1986flow2}
{\sc J.~Carrera and S.~P. Neuman}, {\em Estimation of aquifer parameters under
  transient and steady state conditions: 1. {M}aximum likelihood method
  incorporating prior information}, Water Resources Research, 22 (1986),
  pp.~199--210.

\bibitem{carrera1986flow1}
\leavevmode\vrule height 2pt depth -1.6pt width 23pt, {\em Estimation of
  aquifer parameters under transient and steady state conditions: 2.
  {U}niqueness, stability, and solution algorithms}, Water Resources Research,
  22 (1986), pp.~211--227.

\bibitem{carrera1986flow3}
\leavevmode\vrule height 2pt depth -1.6pt width 23pt, {\em Estimation of
  aquifer parameters under transient and steady state conditions: 3.
  {A}pplication to synthetic and field data}, Water Resources Research, 22
  (1986), pp.~228--242.

\bibitem{chaloner:oed}
{\sc K.~Chaloner and I.~Verdinelli}, {\em Bayesian experimental design: A
  review}, Statistical Science,  (1995), pp.~273--304.

\bibitem{chen2018dimension}
{\sc V.~Chen, M.~M. Dunlop, O.~Papaspiliopoulos, and A.~M. Stuart}, {\em
  Dimension-robust {MCMC} in {B}ayesian inverse problems}, arXiv preprint
  arXiv:1803.03344,  (2018).

\bibitem{clawpack}
{\sc {Clawpack Development Team}}, {\em Clawpack software}.
\newblock \url{https://doi.org/10.5281/zenodo.4025432}, 2020.
\newblock Version 5.8.0.

\bibitem{cotter2013mcmc}
{\sc S.~L. Cotter, G.~O. Roberts, A.~M. Stuart, and D.~White}, {\em {MCMC}
  methods for functions: {M}odifying old algorithms to make them faster},
  (2013).

\bibitem{Cui:MCMC}
{\sc T.~Cui, C.~Fox, and M.~J. O'Sullivan}, {\em A posteriori stochastic
  correction of reduced models in delayed-acceptance {MCMC}, with application
  to multiphase subsurface inverse problems}, International Journal for
  Numerical Methods in Engineering, 118 (2019), pp.~578--605.

\bibitem{cvetkovic:mitigating}
{\sc N.~Cvetković, H.~C. Lie, H.~Bansal, and K.~Veroy}, {\em Choosing
  observation operators to mitigate model error in {B}ayesian inverse
  problems}, arXiv preprint arXiv:2301.04863,  (2023).

\bibitem{dashti:map}
{\sc M.~Dashti, K.~J. Law, A.~M. Stuart, and J.~Voss}, {\em {MAP} estimators
  and their consistency in {B}ayesian nonparametric inverse problems}, Inverse
  Problems, 29 (2013), p.~095017.

\bibitem{dunbar:ensemble}
{\sc O.~R. Dunbar, M.~F. Howland, T.~Schneider, and A.~M. Stuart}, {\em
  Ensemble-based experimental design for targeting data acquisition to inform
  climate models}, Journal of Advances in Modeling Earth Systems, 14 (2022),
  p.~e2022MS002997.

\bibitem{dunlop2017hierarchical}
{\sc M.~M. Dunlop, M.~A. Iglesias, and A.~M. Stuart}, {\em Hierarchical
  {B}ayesian level set inversion}, Statistics and Computing, 27 (2017),
  pp.~1555--1584.

\bibitem{dunlop2021stability}
{\sc M.~M. Dunlop and Y.~Yang}, {\em Stability of {G}ibbs posteriors from the
  {W}asserstein loss for {B}ayesian full waveform inversion}, SIAM/ASA Journal
  on Uncertainty Quantification, 9 (2021), pp.~1499--1526.

\bibitem{feng:OEDUU}
{\sc C.~Feng and Y.~M. Marzouk}, {\em A layered multiple importance sampling
  scheme for focused optimal {B}ayesian experimental design}, arXiv preprint
  arXiv:1903.11187,  (2019).

\bibitem{fuji:patches}
{\sc Y.~Fujii, K.~Satake, S.~Sakai, M.~Shinohara, and T.~Kanazawa}, {\em
  Tsunami source of the 2011 off the pacific coast of {T}ohoku earthquake},
  Earth, planets and space, 63 (2011), pp.~815--820.

\bibitem{halko:randsvd}
{\sc N.~Halko, P.~G. Martinsson, and J.~A. Tropp}, {\em Finding structure with
  randomness: {P}robabilistic algorithms for constructing approximate matrix
  decompositions}, SIAM Review, 53 (2011), pp.~217--288.

\bibitem{hanninen:qpat}
{\sc N.~H{\"a}nninen, A.~Pulkkinen, A.~Leino, and T.~Tarvainen}, {\em
  Application of diffusion approximation in quantitative photoacoustic
  tomography in the presence of low-scattering regions}, Journal of
  Quantitative Spectroscopy and Radiative Transfer, 250 (2020), p.~107065.

\bibitem{helin:map}
{\sc T.~Helin and M.~Burger}, {\em Maximum a posteriori probability estimates
  in infinite-dimensional {B}ayesian inverse problems}, Inverse Problems, 31
  (2015), p.~085009.

\bibitem{herman:reweightl1}
{\sc E.~Herman, A.~Alexanderian, and A.~K. Saibaba}, {\em Randomization and
  reweighted $\ell_1$-minimization for {A}-optimal design of linear inverse
  problems}, SIAM Journal on Scientific Computing, 42 (2020), pp.~A1714--A1740.

\bibitem{higdon:calibration}
{\sc D.~Higdon, M.~Kennedy, J.~C. Cavendish, J.~A. Cafeo, and R.~D. Ryne}, {\em
  Combining field data and computer simulations for calibration and
  prediction}, SIAM Journal on Scientific Computing, 26 (2004), pp.~448--466.

\bibitem{holbach2023bayesian}
{\sc L.~Holbach, M.~Gurnis, and G.~Stadler}, {\em A {B}ayesian level set method
  for identifying subsurface geometries and rheological properties in {S}tokes
  flow}, Geophysical Journal International, 235 (2023), pp.~260--272.

\bibitem{iglesias2013ensemble}
{\sc M.~A. Iglesias, K.~J. Law, and A.~M. Stuart}, {\em Ensemble {K}alman
  methods for inverse problems}, Inverse Problems, 29 (2013), p.~045001.

\bibitem{iglesias2016bayesian}
{\sc M.~A. Iglesias, Y.~Lu, and A.~M. Stuart}, {\em A {B}ayesian level set
  method for geometric inverse problems}, Interfaces and free boundaries, 18
  (2016), pp.~181--217.

\bibitem{jagalur:batch}
{\sc J.~Jagalur-Mohan and Y.~Marzouk}, {\em Batch greedy maximization of
  non-submodular functions: Guarantees and applications to experimental
  design}, The Journal of Machine Learning Research, 22 (2021),
  pp.~11397--11458.

\bibitem{kaipio2013approximate}
{\sc J.~Kaipio and V.~Kolehmainen}, {\em Approximate marginalization over
  modeling errors and uncertainties in inverse problems}, Bayesian theory and
  applications,  (2013), pp.~644--672.

\bibitem{kaipio:stats}
{\sc J.~Kaipio and E.~Somersalo}, {\em {Statistical and computational inverse
  problems}}, Springer, Dordrecht, 2005.

\bibitem{kaipio:invCrimes}
\leavevmode\vrule height 2pt depth -1.6pt width 23pt, {\em Statistical inverse
  problems: {D}iscretization, model reduction and inverse crimes}, Journal of
  computational and applied mathematics, 198 (2007), pp.~493--504.

\bibitem{kennedy:calibration}
{\sc M.~C. Kennedy and A.~O'Hagan}, {\em Bayesian calibration of computer
  models}, Journal of the Royal Statistical Society: Series B (Statistical
  Methodology), 63 (2001), pp.~425--464.

\bibitem{koval:oed}
{\sc K.~Koval, A.~Alexanderian, and G.~Stadler}, {\em Optimal experimental
  design under irreducible uncertainty for linear inverse problems governed by
  {PDE}s}, Inverse Problems, 36 (2020), p.~075007.

\bibitem{LeVeque:japan_bathy}
{\sc R.~LeVeque}.
\newblock \url{https://github.com/rjleveque/tohoku2011-paper1}, 2014.

\bibitem{leveque:tsunami}
{\sc R.~J. LeVeque, D.~L. George, and M.~J. Berger}, {\em Tsunami modelling
  with adaptively refined finite volume methods}, Acta Numerica, 20 (2011),
  p.~211.

\bibitem{leveque:prior}
{\sc R.~J. LeVeque, K.~Waagan, F.~I. Gonz{\'a}lez, D.~Rim, and G.~Lin}, {\em
  Generating random earthquake events for probabilistic tsunami hazard
  assessment}, in Global Tsunami Science: Past and Future, Volume I, Springer,
  2016, pp.~3671--3692.

\bibitem{kovachki2023neural}
{\sc Z.~Li, B.~Liu, K.~Azizzadenesheli, K.~Bhattacharya, and A.~Anandkumar},
  {\em Neural operator: Learning maps between function spaces with applications
  to {PDE}s.}, J. Mach. Learn. Res., 24 (2023), pp.~1--97.

\bibitem{long:fastnonlin}
{\sc Q.~Long, M.~Scavino, R.~Tempone, and S.~Wang}, {\em Fast estimation of
  expected information gains for {B}ayesian experimental designs based on
  {L}aplace approximations}, Computer Methods in Applied Mechanics and
  Engineering, 259 (2013), pp.~24--39.

\bibitem{MacInnes:japan_bathy}
{\sc B.~T. MacInnes, A.~R. Gusman, R.~J. LeVeque, and Y.~Tanioka}, {\em
  Comparison of earthquake source models for the 2011 {T}ohoku event using
  tsunami simulations and near-field observations}, Bulletin of the
  Seismological Society of America, 103 (2013), pp.~1256--1274.

\bibitem{mandli2016clawpack}
{\sc K.~T. Mandli, A.~J. Ahmadia, M.~Berger, D.~Calhoun, D.~L. George,
  Y.~Hadjimichael, D.~I. Ketcheson, G.~I. Lemoine, and R.~J. LeVeque}, {\em
  Clawpack: {B}uilding an open source ecosystem for solving hyperbolic {PDE}s},
  PeerJ Computer Science, 2 (2016), p.~e68.

\bibitem{meinig:DARTII}
{\sc C.~Meinig, S.~E. Stalin, A.~I. Nakamura, and H.~B. Milburn}, {\em
  Real-time deep-ocean tsunami measuring, monitoring, and reporting system: The
  {NOAA DART II} description and disclosure}, NOAA, Pacific Marine
  Environmental Laboratory (PMEL),  (2005), pp.~1--15.

\bibitem{neitzel:oed}
{\sc I.~Neitzel, K.~Pieper, B.~Vexler, and D.~Walter}, {\em A sparse control
  approach to optimal sensor placement in {PDE}-constrained parameter
  estimation problems}, Numerische Mathematik, 143 (2019), pp.~943--984.

\bibitem{nicholson:BAE}
{\sc R.~Nicholson, N.~Petra, and J.~P. Kaipio}, {\em Estimation of the {R}obin
  coefficient field in a {P}oisson problem with uncertain conductivity field},
  Inverse Problems, 34 (2018), p.~115005.

\bibitem{nicholson:linearBAE}
{\sc R.~Nicholson, N.~Petra, U.~Villa, and J.~P. Kaipio}, {\em On global normal
  linear approximations for nonlinear {B}ayesian inverse problems}, Inverse
  Problems, 39 (2023), p.~054001.

\bibitem{okada:surface}
{\sc Y.~Okada}, {\em Surface deformation due to shear and tensile faults in a
  half-space}, Bulletin of the seismological society of America, 75 (1985),
  pp.~1135--1154.

\bibitem{robert1999monte}
{\sc C.~P. Robert, G.~Casella, and G.~Casella}, {\em Monte {C}arlo statistical
  methods}, vol.~2, Springer, 1999.

\bibitem{spantini:LRA}
{\sc A.~Spantini, A.~Solonen, T.~Cui, J.~Martin, L.~Tenorio, and Y.~Marzouk},
  {\em Optimal low-rank approximations of {B}ayesian linear inverse problems},
  SIAM Journal on Scientific Computing, 37 (2015), pp.~A2451--A2487.

\bibitem{stuart:bayes}
{\sc A.~M. Stuart}, {\em Inverse problems: {A} {B}ayesian perspective}, Acta
  numerica, 19 (2010), pp.~451--559.

\bibitem{tong:extreme}
{\sc S.~Tong, E.~Vanden-Eijnden, and G.~Stadler}, {\em Extreme event
  probability estimation using {PDE}-constrained optimization and large
  deviation theory, with application to tsunamis}, Communications in Applied
  Mathematics and Computational Science, 16 (2021), pp.~181--225.

\bibitem{wong:asymptotic}
{\sc R.~Wong}, {\em Asymptotic approximations of integrals}, SIAM, 2001.

\bibitem{wu:fast}
{\sc K.~Wu, P.~Chen, and O.~Ghattas}, {\em A fast and scalable computational
  framework for large-scale and high-dimensional {B}ayesian optimal
  experimental design}, arXiv preprint arXiv:2010.15196,  (2020).

\bibitem{yu:sumup}
{\sc J.~Yu and M.~Anitescu}, {\em Multidimensional sum-up rounding for integer
  programming in optimal experimental design}, Mathematical Programming,
  (2017), pp.~1--40.

\end{thebibliography}

\end{document}